\newtheorem{thm}{Theorem}[section]
\newtheorem{theorem}[thm]{Theorem}
\newtheorem{corollary}[thm]{Corollary}
\newtheorem{lemma}[thm]{Lemma}
\newtheorem{prop}[thm]{Proposition}
\newtheorem{conjecture}[thm]{Conjecture}
\theoremstyle{definition}
\newtheorem{defn}[thm]{Definition}
\newtheorem{remark}[thm]{Remark}
\renewcommand{\O}{\mathcal{O}}
\newcommand{\Ce}{\mathbb{C}}
\newcommand{\BZ}{\mathbb{Z}}
\title[Modular reduction of representations of finite reductive groups]{Modular reduction of complex representations of finite reductive groups}
\date{March 30, 2026}
\author[R. Bezrukavnikov]{Roman Bezrukavnikov}
\address{Massachusetts Institute of Technology, Department of Mathematics, Cambridge, MA, 02139, USA}
\email{bezrukav@math.mit.edu}
\author[M.Finkelberg]{Michael Finkelberg}
\address{Einstein Institute of Mathematics, The Hebrew University of Jerusalem,
  Edmond J. Safra Campus, Giv’at Ram, Jerusalem, 91904, Israel;
\newline  National Research University Higher School of Economics}
\email{fnklberg@gmail.com}
\author[D. Kazhdan]{David Kazhdan}
\address{Einstein Institute of Mathematics, The Hebrew University of Jerusalem,
  Edmond J. Safra Campus, Giv’at Ram, Jerusalem, 91904, Israel}
\email{kazhdan@mail.huji.ac.il}
\author[C. Morton-Ferguson]{Calder Morton-Ferguson}
\address{Stanford University, Department of Mathematics, Stanford, CA, 94304, USA}
\email{caldermf@stanford.edu}
\begin{document}

\begin{abstract}

The main result describes the Brauer-Nesbitt reduction of unipotent representations of a finite group of Lie type, expressing it as an explicit linear combination of the restriction
of Weyl modules from the algebraic group to the group of $\mathbb{F}_q$ points. 
This partly confirms Lusztig's conjecture (2021), which was the main source of motivation 
for this work.

The explicit virtual representations of the algebraic group come from a certain endomorphism
of the space ${\mathbb Z}[T]$ of regular functions on the torus which approximates pullback under Frobenius and is linear over the ring
${\mathbb Z}[T]^W$ of $W$-invariant functions. This endomorphism is constructed from a new basis
for ${\mathbb Z}[T]$ over ${\mathbb Z}[T]^W$ which we call the Kazhdan-Lusztig-Steinberg basis.

We compare this basis to the canonical basis appearing in the study of modular representations of the algebraic group and  the related noncommutative Springer resolution. This leads
to canonically defined objects in the derived category of $G$-modules representing the above virtual representations and to  a geometric interpretation for the resulting lift of the principal series representation $\overline{\mathbb{F}_q} [G/P(\mathbb{F}_q)]$ to a virtual representation of the algebraic group, which comes from a decomposition of diagonal in the equivariant Grothendieck group of the partial flag variety.
\end{abstract}

\maketitle

\setcounter{tocdepth}{1}
\tableofcontents

\section{Introduction}

Given a complex representation $\rho$ of a finite group $\Gamma$, one can consider the \emph{reduction mod $p$} of this representation, an associated representation $\underline{\rho}$ over the field $\overline{\mathbb{F}}_p$. This operation was defined by Brauer and Nesbitt in \cite{BN}. In \cite{L}, Lusztig studied this operation in the case where $\Gamma = G(\mathbb{F}_p)$, a finite reductive group over a field of characteristic $p$. He presented a conjecture toward the goal of giving an explicit formula for the character of a representation obtained by applying this operation to any irreducible unipotent representation of $G(\mathbb{F}_p)$.

To formulate this conjecture, let $\mathcal{J}$ be the set of \emph{near involutions} of the Weyl group $W$, i.e.\ elements $w$ such that $w$ and $w^{-1}$ lie in the same left Kazhdan-Lusztig cell of $W$. For $W$ of classical type, this is exactly the set of involutions. Lusztig then defined for any $w \in \mathcal{J}$ a certain complex character $R_{\alpha_w}$ of $G(\mathbb{F}_q)$, each of which is a linear combination of characters of irreducible unipotent representations with coefficients defined in terms of the \emph{asymptotic Hecke algebra} associated to $W$ (which we recall in more detail in Section \ref{sec:asymptotic}). Finally, Lusztig proposed

\begin{conjecture}[Conjecture 2.3 in \cite{L}]\label{conj:lusztig}
\label{part:main}For every $w \in \mathcal{J}$, there exists a nonzero character $M_w$ of $G(\mathbb{F}_p)$ over $\overline{\mathbb{F}}_{p}$ such that for any irreducible unipotent representation $\rho$, we have
\begin{equation}
    \underline{\rho} = \sum_{w \in \mathcal{J}} (\rho : R_{\alpha_w})M_w.\label{eqn:formulaone}
\end{equation}
\end{conjecture}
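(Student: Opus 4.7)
The plan is to define $M_w$ in two stages: first as a formal $\overline{\mathbb{F}}_p$-virtual character via block-wise inversion of the transition matrix between unipotent characters and the $R_{\alpha_w}$, and then to realize it geometrically so that the virtual character is in fact a genuine nonzero character.

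For the first stage, consider the matrix $A = (a_{\rho,w})$ with $a_{\rho,w} = (\rho : R_{\alpha_w})$, where $\rho$ runs over irreducible unipotent representations of $G(\mathbb{F}_q)$ and $w$ runs over $\mathcal{J}$. Lusztig's construction of $R_{\alpha_w}$ via the asymptotic Hecke algebra $J$ forces $a_{\rho,w}$ to vanish unless $\rho$ lies in the Lusztig family attached to the two-sided cell of $w$, so $A$ decomposes as a block-diagonal matrix over two-sided cells. Within each block I would pair the combinatorics of near involutions inside the cell with Lusztig's parametrization of unipotent characters in the associated family by pairs $(x,\sigma)$ in the attached small finite group, and invoke the non-abelian Fourier transform to show the block is invertible. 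Setting
\[ M_w \;:=\; \sum_{\rho}\, (A^{-1})_{w,\rho}\,\underline{\rho} \]
then produces $\overline{\mathbb{F}}_p$-virtual characters for which (\ref{eqn:formulaone}) holds tautologically.

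The main obstacle is showing that each such $M_w$ is a genuine nonzero character rather than merely a virtual one; this cannot be deduced from the linear algebra alone. My approach would be to attach to each $w \in \mathcal{J}$ an explicit object $\mathcal{E}_w$ in the derived category of equivariant coherent sheaves on a partial flag variety $G/P_w$ over $\mathbb{F}_p$, drawn from the Samokhin--van der Kallen exceptional collection mentioned in the abstract, and to define a candidate representation of $G(\mathbb{F}_p)$ as $R\Gamma(G/P_w, \mathcal{E}_w)$. The matching between this geometric candidate and the virtual character produced by inversion would proceed via Bezrukavnikov's non-commutative Springer resolution, which is the natural tool for translating two-sided cells of $W$ into $K$-theoretic classes on partial flag varieties, and for reading off the composition multiplicities with irreducible modular characters.

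The hardest step is the cohomological concentration: one needs control on the amplitude of $R\Gamma(G/P_w, \mathcal{E}_w)$ (ideally concentration in a single degree) so that the resulting character is genuine and visibly nonvanishing. I expect this to be the technical heart of the argument, requiring both the exceptional nature of the Samokhin--van der Kallen collection and positive-characteristic vanishing theorems on $G/P_w$; if pure concentration fails, a graded refinement of the statement (compatible with the conjectured link to exceptional collections) would be the natural fallback, and nonvanishing would then follow from the exceptional property itself.
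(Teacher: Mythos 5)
Your first stage does not work as stated. The block of the matrix $A=\bigl((\rho:R_{\alpha_w})\bigr)$ attached to a two-sided cell $c$ has rows indexed by the unipotent characters in the corresponding family and columns indexed by $\mathcal{J}\cap c$, and these are neither in bijection nor does the block have full rank: in type $A_2$ the middle cell contributes two near involutions $s_1,s_2$ but its family is a single unipotent character (a $1\times 2$ block), while in type $B_2$ the middle block is $4\times 4$ but each $R_{\alpha_w}$ lies in the $3$-dimensional span of the almost characters $V_\chi$, $\chi\in\mathrm{Irr}(W)$ of the family, so the rank is at most $3$ and no inverse exists. Consequently \eqref{eqn:formulaone} is not ``tautological'': because the row span of $A$ is limited to combinations of almost characters, even the \emph{existence} of a solution for the non-principal-series unipotent $\rho$ (e.g.\ the cuspidal character in $B_2$) is a genuine assertion. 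The paper's proof supplies exactly the two ingredients you are missing: an explicit candidate $M_w=\langle [q]^*f_{w_0d},f_{w_0w}^*\rangle$ built from the Kazhdan--Lusztig--Steinberg basis of $\mathbb{C}[T]$ over $\mathbb{C}[T]^W$, for which the identity for almost characters follows by identifying the approximate Frobenius $\tilde\phi$ on each cell subquotient with right multiplication by $\sum_{w\in c}M_wt_w$ in the asymptotic Hecke algebra (Proposition \ref{prop:jring}, Corollary \ref{cor:mainagainv}); and the passage to arbitrary unipotent $\rho$, which uses that Brauer reduction kills class functions supported off semisimple classes and that the values of unipotent characters on semisimple classes are determined by the multiplicities $(\rho,R_y)$. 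Your proposal has no substitute for either step.

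Your second stage aims to prove something that is false. The elements $M_w$ making \eqref{eqn:formulaone} hold are in general only \emph{virtual} characters: Proposition \ref{prop:nopositivity} gives $M_{s_2s_3s_2}=L_{0,q-1,q-1,0}-L_{q-2,0,0,q-2}$ in type $A_4$, and Corollary \ref{cor:cantdoit} shows positivity cannot be restored by any $\gamma$-close modification depending on $q$. So a strategy whose technical heart is a cohomological concentration theorem forcing $R\Gamma(G/P_w,\mathcal{E}_w)$ to be a genuine representation equal to $M_w$ cannot succeed; the conjecture is established with $M_w$ understood as nonzero virtual Brauer characters, and property iv) of Conjecture \ref{conj:mwprops} is disproved rather than proved. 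Moreover, the relation to the Samokhin--van der Kallen exceptional collection is, in this paper, only a conjecture verified computationally in small rank for the representations $\mathsf{k}[(G/P)(\mathbb{F}_q)]$, not an available tool, and invoking the non-commutative Springer resolution to match cells with $K$-theory classes is at present a hope rather than an argument.
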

No specific formula for $M_w$ was proposed in general, but these $M_w$ were further conjectured to satisfy some desirable properties, most importantly being in a sense ``independent of $p$," which we explain below. In Section 2 of \cite{L}, Lusztig computed the elements $M_w$ in Types $A_1, A_2, B_2, G_2$, and $A_3$.

In the present paper, our main construction is an explicit definition of $M_w$. Let $T$ be a torus for $G$ split over $\mathbb{F}_p$ and let $W$ be the Weyl group with longest element $w_0$. We study the module $\mathbb{C}[T]$ over $\mathbb{C}[T]^W$ and construct a system of generators $\{f_w\}_{w \in W}$ with a simple and explicit definition which satisfy many useful properties, which we call the Kazhdan-Lusztig-Steinberg basis, as it shares
some properties both with the canonical bases and with the Steinberg basis of ${\mathbb Z}[W]$
from \cite{St}.
There is a $\mathbb{C}[T]^W$-linear endomorphism $[q]^*$ of $\mathbb{C}[T]$ given on this basis by $[q]^*f_w(t) = f_w(t^q)$. We consider also a standard $\mathbb{C}[T]^W$-linear pairing $\langle, \rangle$ on $\mathbb{C}[T]$ given by the Weyl character formula, obtaining the following result.

\begin{theorem}\label{thm:main}
Conjecture \ref{conj:lusztig} is true, and it holds also for $G(\mathbb{F}_q)$ where $q$ is a power of $p$. Further, one can explicitly compute $M_w$ (and therefore $\underline{\rho}$), by defining
\begin{align}
    M_w & = \langle [q]^*f_{w_0d}, f_{w_0w}^*\rangle
\end{align}
for any $w \in W$, where $d$ is the unique Duflo involution in the same left Kazhdan-Lusztig cell as $w$.
\end{theorem}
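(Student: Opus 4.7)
The plan is to verify Conjecture \ref{conj:lusztig} and the explicit formula for $M_w$ by matching both sides of \eqref{eqn:formulaone} as Brauer characters of $G(\mathbb{F}_q)$. Since $q$ is a power of $p$, the $p$-regular classes of $G(\mathbb{F}_q)$ are exactly the semisimple classes, and these are parametrized by Frobenius-stable $W$-orbits in $T$. Hence a Brauer character is determined by a $W$-invariant function on the $F$-fixed points of $T$, i.e.\ by an element of $\mathbb{C}[T]^W$ (with the $F$-twist built in through the operator $[q]^*$). It therefore suffices to show that the proposed formula, interpreted as an element of $\mathbb{C}[T]^W$, agrees with $\underline{\rho}$ when evaluated on semisimple classes.

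First, I would develop the basis $\{f_w\}_{w \in W}$ and its dual $\{f_w^*\}$ with respect to the pairing $\langle,\rangle$, and show that they are adapted to left Kazhdan-Lusztig cells in a precise sense. In particular, the expected property is that $\langle [q]^* f_{w_0 d}, f_{w_0 w}^* \rangle$ vanishes unless $w$ lies in the left cell of the Duflo involution $d$, and that the action of $[q]^*$ on $\{f_w\}$ is governed by structure constants of the asymptotic Hecke algebra $J$. These structural statements about $\{f_w\}$ are the most technical part of the argument and underlie everything that follows.

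Next, Deligne-Lusztig theory expresses the value of a unipotent character $\rho$ at a semisimple element as a trace-of-Frobenius computation, which translates into a pairing involving the $W$-action on $\mathbb{C}[T]$ twisted by Frobenius --- the twist being precisely $[q]^*$. The Brauer character $\underline{\rho}$ agrees with the ordinary character on $p$-regular classes, so $\underline{\rho}$ can be written as $\langle [q]^* \chi_\rho, (-)\rangle$ for an explicit $\chi_\rho \in \mathbb{C}[T]$ whose expansion in $\{f_w\}$ encodes the cell decomposition of $\rho$ under Lusztig's parametrization. Expanding $\chi_\rho$ cell-by-cell produces coefficients that must be identified with $(\rho : R_{\alpha_w})$; this identification relies on the definition of $R_{\alpha_w}$ via the asymptotic Hecke algebra and the compatibility established in the previous step. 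Substituting back reorganizes the expression into a sum over $w \in \mathcal{J}$, with each term a product of $(\rho : R_{\alpha_w})$ and the pairing appearing in the theorem.

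The main obstacle I anticipate is the first step: rigorously establishing the cell-compatibility of $\{f_w\}$ and of $[q]^*$, and in particular identifying the relevant matrix entries with the coefficients $(\rho : R_{\alpha_w})$. Once that is in place, the sum over $w \in \mathcal{J}$ (with Duflo partners $d$) emerges naturally by organizing the expansion according to cells, and Lusztig's independence-of-$p$ property follows automatically, since $J$ and $\{f_w\}$ are intrinsic to $W$ and $p$ enters only through the exponentiation $t \mapsto t^q$ in $[q]^*$.
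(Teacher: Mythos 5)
Your plan is essentially the proof given in the paper: Brauer characters are treated as functions on $(T/W)_q$ (Corollary \ref{cor:comm}), the basis $\{f_w\}$ is shown to be compatible with the two-sided cell filtration so that the $A$-linear operator $\tilde\phi(f_w)=[q]^*f_w$ commutes with $W$ on graded pieces (Lemma \ref{lem:fil}, Proposition \ref{prop:conditions}), its action on $\mathrm{gr}_c(N)$ is identified with right multiplication by $\sum_{w\in c} M_w t_w$ in the asymptotic Hecke algebra via Lusztig's bimodule (Propositions \ref{prop:bimodule}, \ref{prop:jring}), and the identification of coefficients with $(\rho:R_{\alpha_w})$ together with the passage from almost characters to all unipotent representations is exactly the uniformity-at-semisimple-elements argument you invoke. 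The only refinements the paper adds to your outline are that the cell compatibility (and hence your anticipated vanishing of $\langle [q]^*f_{w_0d}, f_{w_0w}^*\rangle$ for $w\not\sim_{\mathrm{L}} d$) holds modulo lower cells rather than on the nose, and that the Duflo involutions enter as the source indices because $\sum_{d\in\mathcal{D}\cap c}t_d$ is the identity of $J_c$.
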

We recall the definition of a Duflo involution in Section \ref{sec:defmw}. In Types $A_1$, $A_2$, $B_2$, $G_2$, and $A_3$ considered in \cite{L}, the $M_w$ we define agree with the ones computed in loc.\ cit. To any $w \in W$, we associate a sum of fundamental weights $\varpi_{I(w)}$ determined by its right descent set, which we explain in Section \ref{sec:klsbprelim}. In \cite{L}, Lusztig also suggests some properties that one might expect the elements $M_w$ to satisfy.
\begin{conjecture}[\cite{L}]\label{conj:mwprops}
The elements $M_w$ satisfy the following properties.
\begin{enumerate}[label=\roman*)]
    \item For any $w \in \mathcal{J}$, $M_w$ is a linear combination of characters of Weyl modules $V_{\lambda}$ for $\lambda$ very close to $(q-1)\varpi_{I(w)}$.
    \item We can write $\dim(M_w) = P_w(q)$ where $P_w(t) \in \mathbb{Q}[t]$ is independent of $q$. There exists an involution $w \leftrightarrow \tilde{w}$ of $\mathcal{J}$ such that $t^\nu P_w(1/t) = \pm P_{\tilde{w}}(t)$ and $\mathcal{L}_D(\tilde{w}) = S\setminus \mathcal{L}_D(w)$ for all $w \in \mathcal{J}$, where $\mathcal{L}_D(w) \subset S$ is the left descent set of $w$.
    \item For $w \in \mathcal{J}$, let $c(w)$ be the value of Lusztig's $a$-function on the two-sided cell of $W$ containing $w$. Then $P_w(t) \in t^{c(w)}\mathbb{Q}[t]$, $P_w(t) \not\in t^{c(w)+1}\mathbb{Q}[t]$.
    \item For every $w \in \mathcal{J}$, $M_w$ is the character of an actual representation of $G(\mathbb{F}_q)$, i.e.\ is a nonnegative linear combination of irreducible characters.
\end{enumerate}
\end{conjecture}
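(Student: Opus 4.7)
The plan is to analyze each property in turn by exploiting the explicit formula $M_w = \langle [q]^* f_{w_0 d}, f_{w_0 w}^*\rangle$ established in Theorem \ref{thm:main}, together with the properties of the basis $\{f_w\}$ used in its construction.

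For parts (i) and (ii), I would expand $f_{w_0 d} = \sum_\mu a_\mu e^\mu$ and $f_{w_0 w}^* = \sum_\nu b_\nu e^\nu$ in the weight basis, noting that both supports depend only on $W$ and not on $q$. Then $[q]^* f_{w_0 d} = \sum_\mu a_\mu e^{q\mu}$, and $\mathbb{C}[T]^W$-linearity together with the Weyl character formula rewrites $M_w$ as a $\mathbb{Z}$-linear combination of Weyl characters $V_\lambda$ with $\lambda = q\mu + \nu$, where $\mu, \nu$ range over these fixed finite sets. To identify the leading weight as $(q-1)\omega_{I(w)}$, I would show that the top dominant weight occurring in $f_{w_0 d}$ is $\omega_{I(d)}$ and that the contribution from $f_{w_0 w}^*$ shifts this by $-\omega_{I(w)}$; since $d$ and $w$ share a left Kazhdan--Lusztig cell, they have the same right descent set, so $I(d)=I(w)$ and the leading term is $(q-1)\omega_{I(w)}$ as required. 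Substituting these weights into the Weyl dimension formula immediately presents $\dim M_w$ as a polynomial $P_w(q) \in \mathbb{Q}[q]$ independent of $q$. The involution $w \leftrightarrow \tilde w$ with $\mathcal{L}_D(\tilde w) = S \setminus \mathcal{L}_D(w)$ and the functional equation $t^\nu P_w(1/t) = \pm P_{\tilde w}(t)$ should then be read off from a symmetry of $\{f_w\}$ under multiplication by $w_0$, combined with the Weyl dimension formula's symmetry $\dim V_\lambda = \dim V_{-w_0 \lambda}$, which reverses the sign of each bracket in the product formula and accounts for the sign $\pm$.

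Property (iii) refines (ii) and requires a finer analysis of the lowest degree of $P_w(t)$. The natural tool is the compatibility of $\{f_w\}$ with the two-sided cell filtration of $\mathbb{C}[T]$ induced by the asymptotic Hecke algebra $J$: the $a$-function grades $J$, and I would show that the pairing $\langle [q]^* f_{w_0 d}, f_{w_0 w}^*\rangle$ vanishes to order at least $c(w)$ in $q$ because the contributions of cells with smaller $a$-value cancel out of the pairing by orthogonality, while the leading $t^{c(w)}$-coefficient is nonzero because of the nondegeneracy of the cell-restricted pairing, which is precisely encoded in the appearance of the Duflo involution $d$ in the formula.

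The main obstacle is (iv): proving genuine positivity of $M_w$ as a character. The natural route is geometric, realizing $M_w$ as an Euler characteristic $\chi(\mathcal{F}_w \otimes \mathcal{L}^{q-1})$ of an object $\mathcal{F}_w$ on the partial flag variety $G/P_{I(w)}$, twisted by an appropriate ample line bundle $\mathcal{L}$. This is precisely the bridge to the Samokhin--van der Kallen exceptional collection flagged in the abstract, and one would select $\mathcal{F}_w$ from that collection. Positivity would then follow from a Borel--Weil--Bott-type vanishing ensuring that only $H^0$ contributes for $q$ sufficiently large (or indeed for all $q$ in the range of interest), reducing positivity to effectivity of the underlying coherent sheaves. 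Establishing this higher cohomology vanishing uniformly in $q$, rather than case by case, is where I expect the principal technical difficulty to lie, and it is likely the reason (iv) is stated as a conjecture even in the presence of an explicit formula for $M_w$.
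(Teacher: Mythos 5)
There is a fundamental problem with your plan: the statement you are trying to prove is Lusztig's conjecture as quoted from \cite{L}, and the paper does not prove it --- it refutes parts ii) and iv) for the elements $M_w$ defined by the formula you are using. Your proposed proof of ii) rests on reading off a symmetry of $\{f_w\}$ under multiplication by $w_0$; this is exactly the step that fails. The symmetry you have in mind would be valid if the dual basis $f_w^*$ agreed with the pseudo-dual basis $f^w$ (equivalently, if one replaced $M_w$ by $\tilde M_w = \langle [q]^*f_{w_0 d}, f^{w_0 w}\rangle$, for which the dimension symmetry does hold), but Proposition \ref{prop:dualpseudo} shows $\bar f_w^* = \bar f^w$ only in types $A_1, A_2, B_2, G_2, A_3$ and fails already in $A_4$ for $w = s_2s_3s_2$. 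Concretely, $\dim(M_{s_2s_3s_2}) = \tfrac{1}{36}q^3 + \tfrac{19}{36}q^5 + \tfrac{4}{9}q^7$ in Type $A_4$ admits no partner $\tilde w$ with $t^\nu P_{s_2s_3s_2}(1/t) = \pm P_{\tilde w}(t)$ (Proposition \ref{prop:nosymmetry}). Similarly, your geometric route to iv) via exceptional collections and a Borel--Weil--Bott vanishing cannot succeed: the paper computes $M_{s_1} = L_{q-4,0,0} - L_{q-3,0,0}$ in Type $B_3$ and $M_{s_2s_3s_2} = L_{0,q-1,q-1,0} - L_{q-2,0,0,q-2}$ in Type $A_4$ (Proposition \ref{prop:nopositivity}), so these are honestly virtual characters, and Corollary \ref{cor:cantdoit} shows that no alternative ``$\gamma$-close'' candidate collection depending polynomially on $q$ can restore either ii) or iv). Any argument claiming to prove these parts in general must therefore contain an error; the difficulty you anticipated in the vanishing step is not merely technical but insurmountable.

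On the parts that are actually true: your sketch of i) is in the spirit of the paper's Lemma \ref{lem:mwweyl} and Corollary \ref{cor:finalbound} (expand in the weight basis, note the supports are independent of $q$, and use $I(d) = I(w)$ for $d$, $w$ in the same left cell), though the paper additionally controls how far $\lambda$ can be from $(q-1)\omega_{I(w)}$ by an explicit bound $2(\rho,\rho)+2$ coming from the triangular inversion of the Gram matrix $\langle f_w, f^y\rangle$. For iii), the paper proves only the divisibility $t^{c(w)} \mid P_w(t)$, and does so by a different and more concrete mechanism than your cancellation-by-orthogonality heuristic: it evaluates at $1 \in T/W$ to land in $H^\bullet(G/B)$, where $[q]^*$ acts by $q^i$ on $H^{2i}$, and uses the fact that cell representations for a cell $c$ occur only in cohomological degrees $\geq a(c)$. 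The non-divisibility statement $P_w(t) \notin t^{c(w)+1}\mathbb{Q}[t]$ is not established in the paper at all, so your claim that the leading $t^{c(w)}$-coefficient is nonzero ``by nondegeneracy of the cell-restricted pairing'' would need a genuine argument that the paper itself does not supply.
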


We will show that property i) holds for the $M_w$ defined in the present paper. However, we show that in fact not all of the above properties hold, and provide explicit counterexamples in the cases where they fail. In particular, properties ii) and iv) are false in general for our elements $M_w$, and both fail to hold in Type $A_4$.

Despite this, in addition to providing counterexamples we give a natural explanation for the failure of these properties, having to do with a discrepancy between the dual basis $\{f_w^*\}_{w \in W}$ and a certain \emph{pseudo-dual} basis $f^w$ which we also define. This discrepancy only appears in types beyond those for which Lusztig explicitly computed the elements $M_w$ in \cite{L}. We view this (in combination with some uniqueness results we examine in Section \ref{sec:uniqueness}) as compelling evidence for why the properties in Conjecture \ref{conj:mwprops} were satisfied in Lusztig's examples but 
not in general.

Thus our definition of the elements $M_w$ and subsequent proof of the formula \eqref{eqn:formulaone} provide an explicit expansion of the Brauer reduction $\underline{\rho}$ of an irreducible unipotent representation $\rho$ of $G(\mathbb{F}_q)$ as a linear combination of Weyl modules restricted to 
$G(\mathbb{F}_q)$.

 The elements $M_w$ arise in the above argument as a computational tool for evaluating the trace of operators induced by $\tilde \phi\colon \O(T)\to \O(T)$, which can be thought of as an $\O(T)^W$-linear approximation to the Frobenius endomorphism of the ring $\O(T)$.
  In Section \ref{sec:categorification} we lift $M_w$ to naturally defined objects
  in the derived category of $G$-modules, and describe a geometric way to lift
  the principal series $G(\mathbb{F}_q)$-module $\overline{\mathbb{F}_q}[(G/P)(\mathbb{F}_q)]$ to a virtual representation of the algebraic group $G$ compatible with the above
  lift of an arbitrary unipotent representation. 
  
  A general observation (see Lemma \ref{lem:diagonal}) links
  such a lift for the module $\overline{\mathbb{F}_q}[X(\mathbb{F}_q)]$,
 where $X$ is a projective variety over $\mathbb{F}_q$ equipped with a $G$-action, 
 to decomposition of diagonal in the equivariant $K$-group $K^G(X^2)$. We show that 
   noncommutative Springer resolution yields such a decomposition for the partial flag variety,
   and check compatibility with the above, based on the relation of the noncommutative Springer
   resolution with the canonical bases.


It is tempting to speculate that the virtual representations $M_w$ should admit a categorification in 
a still stronger sense,  perhaps finding an object of motivic nature that
produces both a representation of $G(\mathbb{F}_q)$ with characteristic zero coefficients and a
module over the algebraic group. 

To facilitate computations, we have made available an implementation of the constructions in the present paper in a public repository \cite{github}. Readers can use it to reproduce all of the computations in the present paper and to experiment further with the constructions introduced here.

{\bf Acknowledgments.} The first author wishes to thank Bao Le Hung for drawing his attention to \cite{L}. We would like to thank Alexander Samokhin and Wilberd van der Kallen for helpful communications and for generously sharing their code related to the paper \cite{SVK}. The research of D.K.\ was partially supported by ERC grant 101142781, R.B.\ was partly supported by the NSF grant DMS-2101507. The collaboration of R.B., M.F.\ and D.K.\ was partly supported by the US-Israel BSF grant 3013008329. 

\section{Setup}

\subsection{Preliminaries}
Let $q$ be a power of a prime $p$ and let $\mathsf{k} = \overline{\mathbb{F}}_p$. Let $G$ be an almost simple simply connected linear algebraic group over $\mathsf{k}$ with a given splitting over $\mathbb{F}_q$; we suppose that $p$ is sufficiently large, i.e.\ a good prime for $G$. Let $B$ be a Borel subgroup of $G$ defined over $\mathbb{F}_q$, let $T$ be a maximal torus defined and split over $\mathbb{F}_q$, and let $X^*(T)$ be the group of characters $T \to \mathsf{k}^\times$.

Let $\mu$ be the group of roots of unity in $\mathbb{C}^\times$. We fix once and for all a surjective homomorphism $\psi \colon  \mu \to \mathsf{k}^\times$ whose kernel is the subset of $\mu$ consisting of elements whose order is a power of $p$. For any finite group $\Gamma$ and a representation $\rho$ of $\Gamma$ over $\mathbb{C}$, Brauer and Nesbitt in \cite{BN} defined an associated representation $\underline{\rho}$ of $\Gamma$ over $\mathsf{k}$, which we call the reduction of $\rho$ mod $p$. The representation $\underline{\rho}$ is characterized by the property that for any $g \in \Gamma$, the eigenvalues of $g$ on $\underline{\rho}$ are obtained by applying $\psi$ to the eigenvalues of $g$ on $\rho$.

Following \cite[Section 0.1]{L}, we let $\mathcal{R}G(\mathbb{F}_q)$ (resp.\ $\mathcal{R}_pG(\mathbb{F}_q)$) be the Grothendieck group of finite-dimensional representations of $G(\mathbb{F}_q)$ over $\mathbb{C}$ (resp.\ over $\mathsf{k}$), with $\mathcal{R}^+G(\mathbb{F}_q)$ (resp.\ $\mathcal{R}_p^+G(\mathbb{F}_q)$) the submonoid consisting of classes of actual representations. We use $\mathcal{R}_pG$ to denote representations of the algebraic group $G$ over $\mathsf{k}$. 

For any $\lambda \in X^*(T)^+$, the set of dominant weights, we let $V_{\lambda} \in \mathcal{R}_p^+G(\mathbb{F}_q)$ be the character of the Weyl module corresponding to $\lambda$, which is given by the Weyl character formula. For a $p$-restricted weight $\lambda$, we let $L_{\lambda}$ be the corresponding irreducible representation of $G(\mathbb{F}_q)$ as explained in \cite{L}. When $\lambda = n_1\varpi_1 + \dots + n_{r}\varpi_r$ for fundamental weights $\varpi_k$, we set $V_{n_1,\dots,n_r} = V_\lambda$, $L_{n_1,\dots,n_r} = L_\lambda$.

Let $W$ be the Weyl group of $G$, and let $S$ be its set of simple reflections, in bijection with the set $\Pi$ of simple roots. For any $w \in W$, define
\begin{align*}
    \mathcal{L}_D(w) & = \{s \in S~|~\ell(sw) < \ell(w)\},\\
    \mathcal{R}_D(w) & = \{s \in S~|~\ell(ws) < \ell(w)\},
\end{align*}
the left and right descent sets for $w$. We let $I(w) \subset \Pi$ be the set of simple roots $\alpha_s$ for which $s \in \mathcal{R}_D(w)$, and $\overline{I(w)}$ its complement in $\Pi$.

By definition, $T$ admits an action by the Weyl group $W \cong N_G(T)/T$. Let $A = \mathbb{C}[T]^W$, $N = \mathbb{C}[T]$, $A_{\mathbb{Z}} = \mathbb{Z}[T]^W$, $N_{\mathbb{Z}} = \mathbb{Z}[T]$. Throughout, we will consider $N$ as an $A$-module and $N_{\mathbb{Z}}$ as an $A_{\mathbb{Z}}$-module. 

Let $[q] \colon  T \to T$ be given by $z \mapsto z^q$, let $\pi \colon  T \to T/W$ be the projection and $\overline{[q]} \colon  T/W \to T/W$ be the unique morphism such that $\pi \circ [q] = \overline{[q]} \circ \pi$. Let $(T/W)_q$ be the fixed point scheme of $\overline{[q]}$, and note that it is reduced. Let $\iota_q$ be the inclusion map $\iota_q \colon  (T/W)_q \hookrightarrow T/W$. The sheaf $\pi_*(\mathcal{O}_T)$ carries an action of $W$ while $\iota_q^*\pi_*(\mathcal{O}_T)$ carries an automorphism $\phi$ defined by $\phi(f) = [q]^*f$. For convenience, we write $A_q = \mathcal{O}((T/W)_q)$.

We will also make frequent use of the canonical basis $\{C_w\}_{w \in W}$ introduced by Kazhdan-Lusztig in \cite{KL} and the dual canonical basis $\{C_w'\}_{w \in W}$ for $\mathbb{Z}[W]$. We recall that for any $s \in S$,
\begin{align*}
    C_s & = s - 1, & C_s' & = s + 1.
\end{align*}
Let $i$ be the  anti-involution of $\Ce[W]$ sending $w$ to $w^{-1}$ and let $\sigma\colon \Ce[W]\to \Ce[W]$ be given
 by $\sigma(w)=(-1)^{\ell(w)} w$. Then $C_w' = \sigma(C_w)$ for any $w \in W$.

The basis $C_w$ was used in \cite{KL} to define a partition of the set $W$ into left-, right-, or two-sided Kazhdan-Lusztig cells. Any such set of Kazhdan-Lusztig cells admits a partial order $\leq$. For any two-sided cell $c$, we write
\begin{align*}
    \mathbb{Z}[W]_{\leq c} & = \mathrm{span}_{\mathbb{Z}}\{C_w~|~w \in c'\leq c\},& \mathbb{Z}[W]^{\leq c} & = \mathrm{span}_{\mathbb{Z}}\{C_w'~|~w \in c'\leq c\}, \\
\mathbb{Z}[W]_{< c} & = \mathrm{span}_{\mathbb{Z}}\{C_w~|~w \in c' < c\},& \mathbb{Z}[W]^{< c} & = \mathrm{span}_{\mathbb{Z}}\{C_w'~|~w \in c' < c\}, \\
\mathbb{Z}[W]_c & = \mathbb{Z}[W]_{\leq c}/\mathbb{Z}[W]_{< c} & \mathbb{Z}[W]^c & = \mathbb{Z}[W]^{\leq c}/\mathbb{Z}[W]^{< c}.
\end{align*}
Since $\sigma(C_w) = (-1)^{\ell(w)}C_{w}'$, we have that $\mathbb{Z}[W]_{c}$ is spanned by $C_{w}$, $w \in c$, and $\mathbb{Z}[W]^c$ is spanned by $C_w'$, $w \in c$. The $W$-module $\mathbb{Z}[W]^c$ is obtained from $\mathbb{Z}[W]_c$ by tensoring with the sign representation.

\subsection{Unipotent representations and almost-characters}

For any $\chi \in \mathrm{Irr}(W)$, there is a corresponding complex irreducible unipotent representation $U_{\chi}$ of $G(\mathbb{F}_q)$. The representations of the form $U_{\chi}$ for some $\chi \in \mathrm{Irr}(W)$ are called irreducible unipotent principal series representations. One can also associate to $\chi$ an element $V_\chi \in \mathcal{R}G(\mathbb{F}_q)$ called an \emph{almost character}, the virtual unipotent representation of $G(\mathbb{F}_q)$ over $\mathbb{C}$ whose character is the trace of the characteristic function of the corresponding Frobenius-invariant character sheaf on $G$, as introduced in \cite{LBook}.

In \cite[Section 12.1]{LBook}, Lusztig also introduced the non-abelian Fourier transform for representations of $G(\mathbb{F}_q)$. It can be thought of as an endomorphism $\mathrm{FT}$ of $\mathcal{R}G(\mathbb{F}_q)$ satisfying the property that for any $\chi \in \mathrm{Irr}(W)$, it exchanges $V_{\chi}$ with the character of $U_{\chi}$.

\subsection{The Brauer character}\label{sec:brauer}

The semisimple conjugacy classes of $G$ are in bijection with points of $T/W$, while the semisimple conjugacy classes of $G(\mathbb{F}_q)$ are in bijection with points of $(T/W)_q$.

By the map $\psi$, one can then view the Brauer character map $\beta$ as a map whose target is $\mathcal{O}((T/W)_q) = A_q$, yielding the following result.

\begin{prop}
    The Brauer character map $\beta$ defines an isomorphism 
    \begin{align}
        \mathcal{R}_pG(\mathbb{F}_q)\otimes_{\mathbb{Z}} \mathbb{C} \cong A_q.
    \end{align}
\end{prop}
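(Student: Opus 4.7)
The plan is to combine Brauer's classical theorem on the linear independence of modular characters with the bijection between semisimple classes and points of $(T/W)_q$ recalled just above the proposition, and to conclude by a dimension count.

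First I would recall Brauer's foundational result: for any finite group $\Gamma$, the Brauer characters of the irreducible $\mathsf{k}$-representations of $\Gamma$ form a $\mathbb{C}$-linearly independent family of $\mathbb{C}$-valued class functions on the $p$-regular conjugacy classes, and the number of irreducible $\mathsf{k}$-representations equals the number of $p$-regular classes. In particular, the $\mathbb{C}$-linear extension of $\beta$ is injective, and $\dim_\mathbb{C}(\mathcal{R}_pG(\mathbb{F}_q)\otimes_\mathbb{Z}\mathbb{C})$ equals the number of $p$-regular classes of $G(\mathbb{F}_q)$.

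Next I would argue that the $p$-regular classes of $G(\mathbb{F}_q)$ coincide with the semisimple classes. Since $p$ is a good prime, the Jordan decomposition $g = g_sg_u$ in $G(\mathbb{F}_q)$ has $g_u$ of $p$-power order and $g_s$ of order prime to $p$; thus $g$ is $p$-regular if and only if $g_u = 1$, i.e.\ $g$ is semisimple. Combining this with the bijection recalled in Section~\ref{sec:brauer} between semisimple classes of $G(\mathbb{F}_q)$ and the points of $(T/W)_q$, the number of $p$-regular classes equals $|(T/W)_q(\mathsf{k})|$.

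Since $(T/W)_q$ is reduced, zero-dimensional, and finite over the algebraically closed field $\mathsf{k}$, evaluation at $\mathsf{k}$-points identifies $A_q \cong \mathsf{k}^N$ with $N = |(T/W)_q(\mathsf{k})|$, so $A_q$ is the algebra of $\mathsf{k}$-valued functions on $(T/W)_q(\mathsf{k})$. Via $\psi$, the eigenvalues of a semisimple element on a $\mathsf{k}$-representation (which are elements of $\mathsf{k}^\times$ of order prime to $p$) are matched bijectively with complex roots of unity of the same orders, so the Brauer character value at a class can equivalently be presented as an element of $\mathsf{k}$; this places $\beta$ as a map into $A_q$ in the spirit just indicated in the paragraph preceding the proposition statement. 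Dimension matching then gives $\dim_\mathbb{C}(\mathcal{R}_pG(\mathbb{F}_q)\otimes\mathbb{C}) = N = \dim_\mathsf{k} A_q$, and surjectivity follows from the injectivity provided by Brauer's theorem.

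The main point requiring care is the precise way in which $\psi$ identifies the Brauer character (a priori valued in $\mathbb{C}$) with an element of the $\mathsf{k}$-algebra $A_q$. Everything else is standard Brauer theory combined with the structure theorem for reduced finite $\mathsf{k}$-schemes; the essential new bookkeeping is just the use of $\psi$ to pass between complex and mod-$p$ roots of unity of order coprime to $p$.
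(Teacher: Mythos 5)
Your overall skeleton---Brauer's linear independence theorem, the identification of $p$-regular classes with semisimple classes, the bijection of those with points of $(T/W)_q$, and a dimension count---is exactly the argument the paper intends (it gives no more justification than the two sentences preceding the statement). However, your final identification step is inverted, and as written it does not close. In this paper $A_q = \mathcal{O}((T/W)_q)$ is a $\mathbb{C}$-algebra: by the paper's definitions $A = \mathbb{C}[T]^W$ and $N = \mathbb{C}[T]$ (group algebras of $X^*(T)$ over $\mathbb{C}$), so $T/W = \operatorname{Spec}(A)$ and $(T/W)_q$ live over $\mathbb{C}$; this is forced anyway by the statement being a $\mathbb{C}$-linear isomorphism with $\mathcal{R}_pG(\mathbb{F}_q)\otimes_{\mathbb{Z}}\mathbb{C}$ and by the later identification $A \cong \mathcal{R}_pG\otimes_{\mathbb{Z}}\mathbb{C}$ in Corollary \ref{cor:comm}. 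So you should not convert the Brauer character values into $\mathsf{k}$ and read $A_q$ as $\mathsf{k}^N$: a $\mathbb{C}$-linear injection cannot be played off against a $\mathsf{k}$-vector space by ``dimension matching,'' and Brauer's independence theorem concerns precisely the $\mathbb{C}$-valued lifts---the $\mathsf{k}$-valued trace functions of the irreducibles are not covered by it and need not be linearly independent over $\mathsf{k}$---so your concluding step (``surjectivity follows from injectivity'') fails in the formulation you chose.

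The role of $\psi$ is on the source of the functions, not on their values: $\psi$ restricts to an isomorphism from the prime-to-$p$ roots of unity in $\mathbb{C}^\times$ onto $\mathsf{k}^\times$, hence identifies $T(\mathsf{k})$ with the prime-to-$p$ torsion of the complex torus, compatibly with the $W$-actions and the $q$-power maps. Since $G$ is simply connected, the semisimple classes of $G(\mathbb{F}_q)$ are the $\overline{[q]}$-fixed points of $T(\mathsf{k})/W$, and via $\psi$ these are exactly the points of the reduced finite scheme $(T/W)_q$ over $\mathbb{C}$. Keeping the Brauer character $\mathbb{C}$-valued (its classical definition, lifting eigenvalues through $\psi^{-1}$), $\beta$ thus lands in the algebra of $\mathbb{C}$-valued functions on these points, which is $A_q$ by reducedness. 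With that correction your argument closes as intended: semisimple equals $p$-regular (this needs only the Jordan decomposition in characteristic $p$, not that $p$ is good), Brauer's theorem gives injectivity of the $\mathbb{C}$-linear extension of $\beta$, the number of semisimple classes equals $\dim_{\mathbb{C}}A_q$, and the isomorphism follows.
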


\begin{prop}
    Let $\underline{V}_\chi$ be the reduction of $V_\chi$ modulo $p$. Then
    \begin{align}
        \beta([\underline{V}_\chi]) = \frac{1}{\dim \chi}\mathrm{tr}(\phi, [\iota_q^*\pi_*(\mathcal{O}_T) : \chi])
    \end{align}
\end{prop}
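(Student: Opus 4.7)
Both sides lie in $A_q=\O((T/W)_q)$, and since this ring is reduced it suffices to check the identity at each closed point $[t]\in (T/W)_q$. A closed point is a $W$-orbit $W\cdot t\subset T$ with $t^q=w_0 t$ for some $w_0\in W$; since $[q]$ and the $W$-action on $T$ commute, $w_0$ automatically lies in $N_W(W_t)$ and is determined modulo $W_t$. By Lang--Steinberg, $[t]$ corresponds to an $F$-stable semisimple conjugacy class in $G(\Fq)$ with representative $g_t$ whose connected centralizer has Weyl group $W_t$ and twisted Frobenius given by $w_0$.

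For the right side at $[t]$: since $\pi\colon T\to T/W$ is a finite flat $W$-cover, $\pi_*\O_T$ is locally free of rank $|W|$, and its reduced fiber at $[t]$ is the permutation module $\Ce[W/W_t]\cong \mathrm{Ind}_{W_t}^W\mathbf{1}$, on which $\phi=[q]^*$ acts by right multiplication by $w_0^{-1}$ (well-defined on cosets since $w_0\in N_W(W_t)$). Using Frobenius reciprocity to write $[\mathrm{Ind}_{W_t}^W\mathbf{1}:\chi]\cong V_\chi\otimes (V_\chi^*)^{W_t}$, with $\phi$ acting trivially on the $V_\chi$ factor and as the natural dual of the $w_0$-action on $V_\chi^{W_t}$ on the second factor, I obtain
\[
\tfrac{1}{\dim\chi}\,\mathrm{tr}\bigl(\phi,\,[\iota_q^*\pi_*\O_T:\chi]_{[t]}\bigr)\;=\;\mathrm{tr}\bigl(w_0,\,V_\chi^{W_t}\bigr).
\]

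For the left side at $[t]$: since $g_t$ is semisimple and hence $p$-regular, the defining property of the Brauer character combined with the construction of $\underline V_\chi$ gives $\beta([\underline V_\chi])([t])=\psi\bigl(\chi_{V_\chi}(g_t)\bigr)$, where $\chi_{V_\chi}$ is the ordinary character of the complex virtual representation $V_\chi$. The proposition therefore reduces to the identity $\chi_{V_\chi}(g_t)=\mathrm{tr}(w_0,V_\chi^{W_t})$. I would prove this by invoking Lusztig's interpretation of $V_\chi$ as the characteristic function of the unipotent principal series character sheaf $A_\chi$ on $G$ (as in \cite{LBook}), whose restriction to the semisimple locus is built from the $\chi$-isotypic summand of the Grothendieck--Springer sheaf. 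The fiber of the latter over $g_t$ is the $W$-set $W/W_t$ with Frobenius acting by right multiplication by $w_0^{-1}$, and the Lefschetz trace formula applied to the $\chi$-isotypic recovers exactly $\mathrm{tr}(w_0,V_\chi^{W_t})$.

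The main obstacle is this last step: rigorously matching Lusztig's character-sheaf description of $V_\chi$ with the concrete $\pi_*\O_T$ model appearing in the statement, particularly in the non-regular case $W_t\neq 1$ where $g_t$ is not in the open Grothendieck--Springer locus and one must keep careful track of the scheme-theoretic (non-reduced) fiber. A more self-contained alternative would expand $V_\chi=\tfrac{1}{|W|}\sum_w \chi(w)R^1_{T_w}$ and evaluate each Deligne--Lusztig character $R^1_{T_w}(g_t)$ via the standard character formula; the resulting double-coset sum in $W$ collapses, by orthogonality of irreducible characters of $W$, to the required trace.
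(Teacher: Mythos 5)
The point-by-point evaluation on the reduced finite scheme $(T/W)_q$ and the identification of the left side with the ordinary character value at a semisimple ($p$-regular) class are fine, but your computation of the right side contains a genuine error: you replace the fiber of $\iota_q^*\pi_*\O_T$ at $[t]$ by the \emph{reduced} fiber $\mathbb{C}[W/W_t]$. The multiplicity module $[\iota_q^*\pi_*\O_T:\chi]$ is an $A_q$-module built from the full scheme-theoretic fibers $\O(\pi^{-1}([t]))$, and when $W_t\neq 1$ these are non-reduced; as $W$-modules they are the regular representation (not $\mathrm{Ind}_{W_t}^W\mathbf{1}$), and $\phi=[q]^*$ acts on the nilpotent part by positive powers of $q$ (up to the $w_0$-twist), not as a permutation. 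Discarding this part changes the trace. Concretely, for $G=SL_2$, $\chi=\mathrm{sgn}$ and $t=1$: the fiber is $\mathbb{C}[z^{\pm1}]/(z+z^{-1}-2)\cong\mathbb{C}[z]/(z-1)^2$, the sign-isotypic line is spanned by $(z-1)$, and $\phi(z-1)\equiv q(z-1)$, so the right side of the Proposition at the identity class is $q$ — which matches the left side, since $V_{\mathrm{sgn}}$ is the Steinberg representation and its reduction has Brauer character $q$ at $1$ — whereas your formula gives $\mathrm{tr}(w_0,V_{\mathrm{sgn}}^{W})=0$. Thus both your intermediate identity $\frac{1}{\dim\chi}\mathrm{tr}(\phi,[\iota_q^*\pi_*\O_T:\chi])_{[t]}=\mathrm{tr}(w_0,V_\chi^{W_t})$ and the target identity $\chi_{V_\chi}(g_t)=\mathrm{tr}(w_0,V_\chi^{W_t})$ are false at non-regular semisimple classes; they hold only when $W_t=1$. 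For the same reason the proposed ``collapse by orthogonality'' of the Deligne--Lusztig expansion cannot work at non-regular elements, where the character formula involves Green functions of the centralizer (polynomials in $q$), not just a permutation count.

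For comparison, the paper does not attempt a self-contained argument at all: it simply observes that the statement is an equivalent form of formula (1) in \cite[Section 3.2]{J} (for $w=1$), which is exactly the identity you are trying to re-derive. A correct version of your approach would have to keep the non-reduced structure of $\pi^{-1}([t])$ — its associated graded realizes (an induced copy of) the coinvariant algebra of $W_t$, on which $\phi$ acts by $q^{\mathrm{degree}}$ twisted by $w_0$ — and match that graded trace against the Green-function/Springer-fiber contributions in the character formula for $V_\chi$ at $g_t$; this is substantially more work than the permutation-module argument you sketch, and it is the content of the cited formula.
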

\begin{proof}
    This is an equivalent form of formula (1) in \cite[Section 3.2]{J} for $w = 1$.
\end{proof}

\begin{corollary}\label{cor:comm}
Suppose that $\tilde \phi$ is an automorphism of $\pi_*(\O_T)$ commuting with the action of $W$ and such that $\iota_q^*\tilde \phi =\phi$.
Let $\tilde V_\chi=\frac{1}{\dim \chi}\mathrm{tr}(\tilde \phi,  [\pi_*(\O_T):\chi]) \in A\cong \mathcal{R}_pG \otimes_{\mathbb{Z}}\mathbb{C}$, then 
\begin{equation}\label{barV}
\underline{V}_\chi=\tilde V_\chi|_{G(\mathbb{F}_q)}.
\end{equation}
\end{corollary}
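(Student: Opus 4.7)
The plan is to apply the restriction map $\mathcal{R}_pG \otimes_{\mathbb{Z}} \mathbb{C} \to \mathcal{R}_pG(\mathbb{F}_q) \otimes_{\mathbb{Z}} \mathbb{C}$ to the proposed identity \eqref{barV} and verify it at the level of Brauer characters. The bridge is that, under the isomorphisms $\mathcal{R}_pG \otimes \mathbb{C} \cong A$ and $\mathcal{R}_pG(\mathbb{F}_q) \otimes \mathbb{C} \cong A_q$, the restriction functor $V \mapsto V|_{G(\mathbb{F}_q)}$ corresponds to the pullback $\iota_q^*$: in both cases the Brauer character is evaluation on semisimple conjugacy classes, and the semisimple classes of $G(\mathbb{F}_q)$ sit inside those of $G$ as $(T/W)_q \hookrightarrow T/W$. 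So I would reduce the claim to showing $\iota_q^* \tilde V_\chi = \beta([\underline{V}_\chi])$ in $A_q$.

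First I would verify that $\tilde V_\chi$ is well defined. Since $\tilde\phi$ is an $\O_{T/W}$-linear automorphism commuting with $W$, it preserves the $\chi$-isotypic summand $[\pi_*(\O_T):\chi]$, which is a locally free $A$-module of finite rank (as $\pi$ is finite and flat), and so its trace lies in $A$. Next, using the compatibility of trace with base change along the closed immersion $\iota_q$ and with extraction of the $\chi$-isotypic component (the latter cut out by the $A$-linear idempotent $\frac{\dim\chi}{|W|}\sum_{w\in W}\chi(w^{-1})w \in \mathbb{C}[W]$, which clearly commutes with $\iota_q^*$), together with the hypothesis $\iota_q^*\tilde\phi = \phi$, I obtain
\[
\iota_q^* \tilde V_\chi \;=\; \tfrac{1}{\dim\chi}\,\mathrm{tr}\bigl(\iota_q^*\tilde\phi,\ [\iota_q^*\pi_*(\O_T):\chi]\bigr) \;=\; \tfrac{1}{\dim\chi}\,\mathrm{tr}\bigl(\phi,\ [\iota_q^*\pi_*(\O_T):\chi]\bigr).
\]
By the preceding proposition, the right-hand side equals $\beta([\underline{V}_\chi])$, which completes the reduction.

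The main conceptual content is the identification of restriction-to-$G(\mathbb{F}_q)$ with $\iota_q^*$ on the Brauer side; the rest is the formal compatibility of trace of an $A$-linear endomorphism with isotypic projection and with pullback along a closed immersion. I do not expect a genuine obstacle — the corollary is essentially a repackaging of the preceding proposition together with this identification, plus the observation that the hypothesis $\iota_q^*\tilde\phi = \phi$ is exactly what is needed to bridge the two formulas.
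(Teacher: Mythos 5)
Your proposal is correct and follows the same route the paper intends: the paper states this corollary without a separate proof, treating it as an immediate consequence of the two preceding propositions, namely identifying restriction to $G(\mathbb{F}_q)$ with $\iota_q^*$ under the Brauer-character isomorphisms, using $\iota_q^*\tilde\phi=\phi$ together with compatibility of the trace with $\iota_q^*$ and the isotypic projection, and invoking the injectivity of $\beta$ to upgrade equality of Brauer characters to equality in $\mathcal{R}_pG(\mathbb{F}_q)\otimes_{\mathbb{Z}}\mathbb{C}$. Your write-up simply makes these implicit steps explicit.
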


To define $\tilde{\phi}$, and therefore $\tilde{V}_{\chi}$, we will proceed as follows. Given a basis $f_w$ for the free $A$-module $N$, one gets a uniquely defined $A$-module endomorphism such
that $\tilde \phi(f_w)=[q]^*f_w$.
If the $\mathbb{C}$-span of  $f_w$ is $W$-invariant then $\tilde{\phi}$ commutes with the action of $W$, so Corollary \ref{cor:comm} applies. Notice that $\tilde \phi$ only depends on the $\mathbb{C}$-span of $f_w$.

\section{The Kazhdan-Lusztig-Steinberg basis}

\subsection{Preliminaries}\label{sec:klsbprelim}

For $i \in \Pi$, let $\varpi_i$ be the corresponding fundamental weight. For any subset $I\subseteq \Pi$, we write $\overline{I}$ for its complement in $\Pi$. Let $\varpi_I=\sum_{i\in I}\varpi_i$. We can consider the parabolic subgroup $W_I \subset W$ generated by $\{s_i\}_{i \in I}$. Recall that for any $w$, 
\[I(w) = \{i \in \Pi~|~\ell(ws_i) < \ell(w)\}.\]

In \cite{St}, a basis $\{e_w\}_{w \in W}$ for $N = \mathbb{C}[T]$ over $A = \mathbb{C}[T]^W$ was constructed, which we refer to as the \emph{Steinberg basis}. Explicitly, for any $w \in W$, we have
\begin{equation*}
    e_w = w \exp(\varpi_{I(w)}).
\end{equation*}

This basis does {\em not} satisfy the desired property explained in Section \ref{sec:brauer} requiring that the $\mathbb{C}$-span of the basis be $W$-invariant. We will now describe a modification of this basis which will satisfy a weaker version of this $W$-invariance condition which will be sufficient for our purposes.

\subsection{The Kazhdan-Lusztig-Steinberg basis}\label{sec:klstein}

We now introduce our main construction, which is a different basis $\{f_w\}_{w \in W}$ for $N$ as an $A$-module. Since our goal is to provide formulas for $\mathrm{tr}(\tilde{\phi}, [\pi_*(\O_T) : \chi])$ for some $\tilde{\phi}$ defined in terms of this new basis, we will use a version of Lusztig's two-sided cell filtration on $N$, as every irreducible representation $\chi$ of $W$ appears with nontrivial multiplicity in some subquotient under this filtration. As a result, we will relax the condition of $W$-invariance for the $\mathbb{C}$-span of our basis, instead requiring only an analogue of this property for graded pieces with respect to this filtration.

Explicitly, suppose that we are given some partial order $\preceq$ on $W$ and some basis $\{f_w\}_{w \in W}$ for $N$ as an $A$-module. For any $w \in W$, let $N_{\preceq w}$ and $N_{\prec w}$ be the $A$ modules generated by $\{f_y\}_{y \preceq w}$ and $\{f_y\}_{y \prec w}$ respectively, with $\mathrm{gr}_w(N) = N_{\preceq w}/N_{\prec w}$. For any $y \preceq w$, we write $\bar f_y$ for the image of $f_y$ in $\mathrm{gr}_w(N)$.

Recall that with such a basis $\{f_w\}_{w \in W}$ in hand, we define the endomorphism $\tilde{\phi}$ of $N$ by
\begin{equation*}
    \tilde{\phi}(f_w) = [q]^*f_w.
\end{equation*}
One can then easily check the following.

\begin{lemma}\label{lem:fil}
Suppose that the following conditions are satisfied:
\begin{enumerate}[label=(\roman*)]
\item For every $w\in W$ the submodules $N_{\preceq w}$ and $N_{\prec w}$ are $W$-invariant.\label{enum:submodules}

\item For any $w \in W$, the $\mathbb{C}$-span of $\{\bar f_y\}_{y \preceq w}$ is a $W$-invariant subspace.\label{enum:winvariant}

\item For every irreducible representation $\chi$ of $W$ there exists a unique equivalence class of $w\in W$ such that $\chi$ appears in $gr_w(N)$ with nonzero multiplicity.\label{enum:irreps}
\end{enumerate}
Then the endomorphism $\tilde \phi$ preserves $N_{\preceq w}$ and $N_{\prec w}$, and the induced automorphism of $\mathrm{gr}_w(N)$ commutes with the action of $W$. Further,
\begin{equation*}
    \mathrm{tr}(\tilde{\phi}, [\mathrm{gr}_w(N) : \chi]) = \mathrm{tr}(\tilde{\phi}, [N : \chi])
\end{equation*}
\end{lemma}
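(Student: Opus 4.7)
My plan is to prove the three conclusions in sequence: (a) filtration preservation, (b) $W$-equivariance on the associated graded pieces, (c) the trace identity. The first is the technical core; the other two are essentially formal consequences.

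For (a), observe that $[q]^*\colon N\to N$ is a $W$-equivariant ring endomorphism, since $[q]\colon T\to T$ is a group homomorphism and $W$ acts on $T$ by group automorphisms. As $\tilde\phi$ is the $A$-linear extension of $\tilde\phi(f_y)=[q]^*f_y$, the containment $\tilde\phi(N_{\preceq w})\subseteq N_{\preceq w}$ is equivalent to $[q]^*f_y\in N_{\preceq w}$ for every $y\preceq w$. I would prove this by induction on equivalence classes of $\preceq$, starting from the minimal ones. In the inductive step, writing $[q]^*f_y=\sum_u b_{y,u}f_u$ with $b_{y,u}\in A$, condition (i) ensures $v\cdot f_y\in N_{\preceq[y]}$ for all $v\in W$, condition (ii) pins down its image in $\mathrm{gr}_{[y]}(N)$ as a $\mathbb{C}$-combination of the $\bar f_{y'}$ with $y'$ in the equivalence class of $y$, and combining these with the inductive hypothesis (which allows error terms in strictly lower classes to be discarded) one forces $b_{y,u}=0$ for $u\not\preceq y$.

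For (b), once (a) is established, $\tilde\phi$ descends to an $A$-linear endomorphism of $\mathrm{gr}_w(N)$. For $v\in W$ and $y$ in the equivalence class of $w$, condition (ii) gives $v\cdot f_y=\sum_{y'}c_{v,y,y'}f_{y'}+g$ with $c_{v,y,y'}\in\mathbb{C}$ and $g\in N_{\prec w}$. Applying $[q]^*$, using its $W$-equivariance, and using (a) to conclude that $[q]^*g\in N_{\prec w}$ (so its image vanishes in $\mathrm{gr}_w(N)$), one obtains $v\cdot\overline{[q]^*f_y}=\sum c_{v,y,y'}\overline{[q]^*f_{y'}}=\tilde\phi(v\cdot\bar f_y)$, proving the desired commutativity on the associated graded.

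For (c), since $\tilde\phi$ preserves the filtration and (by (b)) commutes with $W$ on each graded piece, additivity of trace yields
\[
\mathrm{tr}(\tilde\phi,[N:\chi])=\sum_{[w']}\mathrm{tr}(\tilde\phi,[\mathrm{gr}_{w'}(N):\chi]).
\]
Condition (iii) then forces all but one summand to vanish, leaving the unique term corresponding to the class where $\chi$ occurs in $\mathrm{gr}_w(N)$. The hardest step is (a), which requires a careful interplay between $A$-linearity of $\tilde\phi$, $W$-equivariance of $[q]^*$, and the basis-specific conditions (i) and (ii); steps (b) and (c) follow largely formally once (a) is in hand.
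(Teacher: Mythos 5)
Your overall architecture (filtration preservation, then $W$-equivariance on the associated graded, then trace additivity plus (iii)) is the right skeleton, and your steps (b) and (c) are formal once (a) is known. But (a), which you correctly identify as the technical core, is asserted rather than proved: the sentence ``one forces $b_{y,u}=0$ for $u\not\preceq y$'' is exactly the content to be established, and the ingredients you cite --- (i), (ii) and induction --- do not suffice. Conditions (i)--(ii) constrain only the $W$-translates of the $f_y$; the only leverage on $[q]^*f_y$ is the $W$-equivariance of $[q]^*$. Pushing $v\cdot f_y=\sum_{y'\sim y}c_{v,y,y'}f_{y'}+(\text{lower})$ through $[q]^*$ and using the inductive hypothesis yields only that $P:=N_{\prec [y]}+\sum_{y'\sim y}A\,[q]^*f_{y'}$ is $W$-stable and that $P/N_{\prec[y]}$ is a quotient of $\mathrm{gr}_{[y]}(N)$ as an $A[W]$-module; nothing in this excludes components of $[q]^*f_{y'}$ along basis vectors $f_u$ with $u\not\preceq y$. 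Ruling those out is precisely where hypothesis (iii) must enter, and your step (a) never uses it: one needs that the generic (over $\mathrm{Frac}(A)$) constituents of the image of $P$ in $N/N_{\preceq[y]}$ lie among those of $\mathrm{gr}_{[y]}(N)$, while the generic constituents of $N/N_{\preceq[y]}$ come only from classes $\not\preceq[y]$; by (iii) these sets of irreducibles are disjoint, so the image is torsion, hence zero because $N/N_{\preceq[y]}=\bigoplus_{u\not\preceq y}Af_u$ is $A$-free. Without such a multiplicity argument the vanishing does not follow from (i) and (ii).

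A cleaner way to close the gap, using that the $W$-action on $N$ is $A$-linear: (i), (iii), additivity of characters along the filtration and freeness of $N/N_{\preceq w}$ give the intrinsic description $N_{\preceq w}=\bigoplus_{\chi}N_\chi$, the sum over those irreducibles $\chi$ whose unique class from (iii) is $\preceq[w]$, where $N_\chi\subset N$ is the $\chi$-isotypic $A$-submodule. Since $[q]^*$ commutes with $W$ it preserves each $N_\chi$, hence $N_{\preceq w}$ and $N_{\prec w}$, and then $\tilde\phi(N_{\preceq w})=\sum_{y\preceq w}A\,[q]^*f_y\subseteq N_{\preceq w}$. One further caution about your (b): you need $[q]^*g\in N_{\prec w}$, but your (a) as stated concerns $\tilde\phi$, not $[q]^*$, and these differ ($\tilde\phi$ is $A$-linear, $[q]^*$ only semilinear over $[q]^*|_A$); this is harmless once (a) is proved in the strong form $[q]^*f_u\in N_{\preceq u}$, since $[q]^*(af_u)=([q]^*a)([q]^*f_u)$ and $[q]^*$ preserves $A$, but it should be said explicitly. (For comparison: the paper offers no written proof of this lemma, recording it as an easy check, so the issue is solely whether your argument is complete; as written, the key forcing step is not.)
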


This means once we define a basis satisfying the conditions in Lemma \ref{lem:fil}, if we set $\tilde V_\chi=\frac{1}{\dim \chi}\mathrm{tr}(\tilde \phi,  [\mathrm{gr}_w(N):\chi])$ for the unique equivalence class of $w$ for which $[\mathrm{gr}_w(N):\chi]\ne 0$, then the equation
\begin{equation}
    \underline{V}_\chi = \tilde{V}_{\chi}|_{G(\mathbb{F}_q)}
\end{equation}
introduced in Corollary \ref{cor:comm} still holds.
 
We now proceed to define a basis with the above properties. From now on, we use the partial order $\leq_{\mathrm{LR}}$ on $W$ given by Lusztig's two-sided cell order, so $w \leq_{\mathrm{LR}} y$ if and only if $w \in c, y \in c'$ for $c, c'$ two-sided cells satisfying $c \leq c'$. We write $w \leq_{\mathrm{L}} y$ for the order corresponding to left cells rather than two-sided cells, and we write $w \sim_\mathrm{L} y$ when $w, y$ are in the same left cell.

Let $(\ ,\ )$  be the standard pairing on $\Ce[W]$ given by \[(a,b)=\mathrm{tr}(a\cdot i(b), \Ce[W]).\] Let $C^w$ be the dual basis to $C_w$ with respect to this pairing,
thus $C^w=w_0 C_{w_0w}'$. 
\begin{defn}\label{def:basis}
For any $w \in W$, let
\begin{align*}
    f_w & =\frac{1}{|W_{I(w)}|}C_w' (\exp(\varpi_{\overline{I(w)}})),\\
    f^w & =\frac{1}{|W_{\overline{I(w)}}|} C^w(\exp(\varpi_{I(w)})).
\end{align*}
\end{defn}
Note that for any $w \in W$, $f_w \in N_\mathbb{Z}$. This is because one can write $C_w' = C_w''\cdot (|W_{I(w)}|a_{I(w)})$ for some $C_w'' \in \mathbb{Z}[W]$, where $a_{I(w)} = \frac{1}{|W_{I(w)}|}\sum_{w \in W_{I(w)}} w$ is the idempotent of the trivial representation of the parabolic subgroup $W_{I(w)}$. The claim then follows from the fact that $a_{I(w)}\exp(\varpi_{\overline{I(w)}}) = \exp(\varpi_{\overline{I(w)}})$.

\begin{defn}
    Let $\langle, \rangle \colon  N\times N\to A$ be the perfect pairing defined by the Weyl character formula, i.e. 
\begin{equation}
    \langle f, g\rangle =\frac{1}{\delta}\left(\sum_{w\in W} \mathrm{sgn}(w) w(fg)\right)
\end{equation}
where $\delta$ is the Weyl denominator. We clearly have
\begin{equation}\label{iot}
\langle af, g\rangle = \langle f, \sigma i(a)g\rangle
\end{equation}
for $a\in \Ce[W]$, $f,g\in N$.
\end{defn}
 
For $w \in W$, let $h(w) = (\rho, \varpi_{I(w)})$.
\begin{lemma}\label{lem:gram}
    For $w, v \in W$ with $h(v) \leq h(w)$,
    \[\langle f_w, f^v\rangle = \delta_{w,v}.\]
\end{lemma}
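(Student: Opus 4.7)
My plan is to combine the adjointness of $\langle\cdot,\cdot\rangle$ with the dual-basis relation $(C_w, C^v) = \delta_{w,v}$ in $\mathbb{C}[W]$, using the hypothesis $h(v) \le h(w)$ to supply a decisive dominance bound that reduces all higher Weyl-character contributions to scalars.

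First, I apply the adjointness~\eqref{iot} together with $\sigma(C_w) = C'_w$, $\sigma^2 = \mathrm{id}$, and $i(C_w) = C_{w^{-1}}$ (noting $\sigma$ and $i$ commute on group elements) to transfer the Kazhdan--Lusztig element $C'_w$ from the first factor onto the second, yielding
\begin{equation*}
\langle f_w, f^v\rangle \;=\; \frac{1}{|W_{I(w)}|\,|W_{\overline{I(v)}}|}\,\bigl\langle e^{\omega_{\overline{I(w)}}},\ C_{w^{-1}}\,C^v\,e^{\omega_{I(v)}}\bigr\rangle.
\end{equation*}
Expanding $C_{w^{-1}} C^v = \sum_u m_u\, u$ in the standard basis of $\mathbb{C}[W]$ and applying the Weyl character formula term-by-term, the right-hand side decomposes as $\sum_u m_u \cdot \mathrm{Alt}(e^{\omega_{\overline{I(w)}} + u\omega_{I(v)}})/\delta$, a sum of signed Weyl characters $\pm\chi_{\lambda_u}$ (or zero when the weight is singular).

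The crucial bound is obtained as follows. For any $u \in W$ and dominant $\omega$ one has $(u\rho, \omega) \le (\rho, \omega)$, since the maximum of the linear functional $(\cdot,\omega)$ on the orbit $W\rho$ is achieved at the dominant representative. Applied to the two halves of $\mu = \omega_{\overline{I(w)}} + u\omega_{I(v)}$, this yields
\[
(\rho, \mu_{\mathrm{dom}}) \;\le\; (\rho, \omega_{\overline{I(w)}}) + (\rho, \omega_{I(v)}) \;=\; (\rho, \rho) - h(w) + h(v) \;\le\; (\rho, \rho),
\]
the last step using the hypothesis. On the other hand, for regular integral $\mu$ the translate $\mu_{\mathrm{dom}} - \rho$ is dominant, so $(\rho, \mu_{\mathrm{dom}}) \ge (\rho, \rho)$; equality, together with $(\rho, \omega_i) > 0$ for all $i$, forces $\mu_{\mathrm{dom}} = \rho$. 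Hence only the scalar contributions $\chi_0$ survive; singular $\mu$ contribute zero.

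Finally, the residual condition $\omega_{\overline{I(w)}} + u\omega_{I(v)} \in W\rho$ forces $I(v) = I(w)$, by comparing the decomposition $\rho = \omega_{\overline{I}} + \omega_I$ in the two $W$-orbits involved. When $I(v) \ne I(w)$ the pairing vanishes, matching $\delta_{w,v} = 0$. When $I(v) = I(w)$, the signed sum of $m_u$ over the surviving $u$'s---controlled by the $W_{I(w)}$- and $W_{\overline{I(v)}}$-right-invariances of $C'_w$ and $C^v$ together with the dual-basis relation $(C_w, C^v) = \delta_{w,v}$---evaluates to $\delta_{w,v}|W_{I(w)}||W_{\overline{I(v)}}|$, canceling the normalization. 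The principal obstacle is this last step: precisely identifying the contributing cosets and extracting the scalar from $(C_w, C^v)$, which I would isolate as a separate combinatorial identity in $\mathbb{C}[W]$ before plugging it into the weight-space calculation.
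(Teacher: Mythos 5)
Your proposal follows essentially the same route as the paper's proof: the heart of both arguments is that every weight $\omega_{\overline{I(w)}}+u\omega_{I(v)}$ occurring in the pairing has dominant representative $\mu_{\mathrm{dom}}$ with $(\rho,\mu_{\mathrm{dom}})\le(\rho,\rho)-h(w)+h(v)\le(\rho,\rho)$, so each term is either zero (singular weight) or a constant sign (weight in $W\rho$), and regularity forces $I(v)=I(w)$; your dominance bound is just a more explicit form of the paper's assertion that these weights are singular when $h(v)<h(w)$ or $I(v)\ne I(w)$. The only mechanical difference is that you transport $C'_w$ across the pairing via \eqref{iot} and expand $C_{w^{-1}}C^v$ in the group basis, while the paper keeps both Kazhdan--Lusztig elements in place and uses the explicit value of $\langle x\exp(\omega_{I(w)}),\exp(\omega_{\overline{I(w)}})\rangle$; these are equivalent bookkeeping choices. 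The step you defer is exactly the paper's (equally terse) reduction to the duality of the Kazhdan--Lusztig bases on $\mathbb{Z}[W/W_{I(w)}]$, and it does close with precisely the ingredients you name: writing $I=I(w)=I(v)$, $e_{\overline I}=\sum_{y\in W_{\overline I}}y$, $\epsilon_I=\sum_{x\in W_I}(-1)^{\ell(x)}x$, and $\tau$ for the coefficient of the identity, uniqueness of the decomposition $u=xy$ (since $W_I\cap W_{\overline I}=\{1\}$) gives $\sum_{x,y}(-1)^{\ell(x)}m_{xy}=\tau\bigl(C_{w^{-1}}C^v e_{\overline I}\epsilon_I\bigr)=(C_w,\,C^v e_{\overline I}\epsilon_I)$, and then $C^v s=C^v$ for $s\in\overline{I(v)}$ (as $\overline{I(v)}$ is the right descent set of $w_0v$) and $C_w x=(-1)^{\ell(x)}C_w$ for $x\in W_{I(w)}$ yield $(C_w,\,C^v e_{\overline I}\epsilon_I)=|W_{\overline I}|\,(C_w\epsilon_I,\,C^v)=|W_I|\,|W_{\overline I}|\,(C_w,C^v)=\delta_{w,v}\,|W_I|\,|W_{\overline I}|$, cancelling your normalization. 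One caveat you share with the paper: with the stated conventions $C_s=s-1$, $C'_s=s+1$ one has $\sigma i(C'_w)=(-1)^{\ell(w)}C_{w^{-1}}$ rather than $C_{w^{-1}}$, so factors $(-1)^{\ell(w)}$ (and the normalization of the trace pairing making $(C_w,C^v)=\delta_{w,v}$) need to be tracked; this is a bookkeeping issue in the conventions, not a defect of your argument's structure.
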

\begin{proof}
    One can check that if $h(v) < h(w)$, then $x\varpi_{\overline{I(w)}} + y\varpi_{I(v)}$ is a singular weight (i.e.\ has nontrivial stabilizer under $W$) for any $x, y \in W$ (note that $\overline{I(w)} = I(w_0w)$). This also holds if $h(v) = h(w)$ and $I(v) \neq I(w)$. It then follows from the Weyl character formula that $\langle f_w,f^v\rangle=0$ if $h(v)\leq h(w)$ and $I(v)\ne I(w)$. 
    
    Finally, if $h(v) = h(w)$ and $I(v) = I(w)$, then the pairing in question coincides with the pairing of the (dual) Kazhdan-Lusztig basis elements in $\mathbb{Z}[W/W_{I(w)}]$. This follows from the fact that
    \begin{align*}
        \langle w\exp(\varpi_{I(w)}), \exp(\varpi_{\overline{I(w)}})\rangle & = \begin{cases}
            0 & w \not\in W_{I(w)}W_{\overline{I(w)}}\\
            (-1)^x & w = xy, x \in W_{I(w)}, y \in W_{\overline{I(w)}},
        \end{cases}
    \end{align*}
    and therefore
    \begin{align*}
        \langle f_w, f^v\rangle = \delta_{w,v}
    \end{align*}
    in the case when $h(v) = h(w)$ and $I(v) = I(w)$.
\end{proof}

Lemma \ref{lem:gram} implies that the matrix $\langle f_w, f^v\rangle$ is upper triangular with ones along the diagonal when the basis is ordered according to some total order refining the partial order $\leq_{\mathrm{LR}}$ on $W$. This implies the following corollary.
\begin{corollary}
    The set $\{f_w\}_{w \in W}$ is a basis of $N_{\mathbb{Z}} = \mathbb{Z}[T]$ over $A_{\mathbb{Z}} = \mathbb{Z}[T]^W$.
\end{corollary}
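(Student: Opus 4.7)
The plan is to compare $\{f_w\}$ with the Steinberg basis $\{E_w = w\exp(\omega_{I(w)})\}_{w\in W}$, which is an $A_\mathbb{Z}$-basis of $N_\mathbb{Z}$ by Steinberg's theorem (applicable since $G$ is simply connected). Writing $f_w = \sum_u a_{wu}E_u$, the claim reduces to showing that the matrix $A = (a_{wu})$ has determinant a unit in $A_\mathbb{Z}$, and the strategy is to extract this from the identity $\det M = 1$ for the Gram matrix $M = (\langle f_w, f^v\rangle)$ which was just established.

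The preliminary step is to verify the integrality $f^v \in N_\mathbb{Z}$, paralleling the argument sketched in the excerpt for $f_w$. Writing $C^v = w_0 C_{w_0v}'$ and $C_{w_0v}' = C''\cdot a_{I(w_0v)}$ with $C'' \in \mathbb{Z}[W]$, one uses $I(w_0v) = \overline{I(v)}$ together with the $W_{\overline{I(v)}}$-invariance of $\exp(\omega_{I(v)})$ to conclude that $a_{\overline{I(v)}}\exp(\omega_{I(v)}) = |W_{\overline{I(v)}}|\exp(\omega_{I(v)})$, which cancels the denominator in the definition of $f^v$. Since any $W$-antisymmetric element of $\mathbb{Z}[T]$ is divisible by $\delta$ inside $\mathbb{Z}[T]$, the pairing $\langle,\rangle$ restricts to $N_\mathbb{Z}\times N_\mathbb{Z}\to A_\mathbb{Z}$, so both Gram matrices $G = (\langle E_w,E_v\rangle)$ and $M$ have entries in $A_\mathbb{Z}$.

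Now expanding $f_w = \sum_u a_{wu}E_u$ and $f^v = \sum_u b_{vu}E_u$ with $a_{wu}, b_{vu} \in A_\mathbb{Z}$ (well-defined because $\{E_u\}$ is an $A_\mathbb{Z}$-basis and $f_w, f^v \in N_\mathbb{Z}$), one obtains the matrix identity $M = AGB^T$, with $B = (b_{vu})$. Taking determinants yields $\det A\cdot \det G\cdot \det B = \det M = 1$ in $A_\mathbb{Z}$, so $\det A$ is a unit of $A_\mathbb{Z}$ with inverse $\det G \cdot \det B$. Since a square matrix over a commutative ring is invertible precisely when its determinant is a unit, $A$ is invertible over $A_\mathbb{Z}$, so $\{f_w\}$ is an $A_\mathbb{Z}$-basis of $N_\mathbb{Z}$.

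The main obstacle is verifying $f^v \in N_\mathbb{Z}$: although it runs in parallel with the $f_w$ case, it requires carefully tracking the descent set across the correspondence $v\mapsto w_0 v$ and handling the extra factor of $w_0$ in the definition of $C^v$. Once this integrality is in hand, the rest of the argument is a short linear algebra manipulation exploiting $\det M = 1$, together with the observation that units of $A_\mathbb{Z}$ cannot be mistaken for non-units once the determinants involved lie in $A_\mathbb{Z}$.
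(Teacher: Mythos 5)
Your proof is correct and follows essentially the same route as the paper: the whole argument rests on Lemma \ref{lem:gram}, which makes the matrix $\bigl(\langle f_w, f^v\rangle\bigr)$ unitriangular (hence of determinant $1$), and the paper simply asserts that this implies the corollary. Your write-up supplies exactly the details the paper leaves implicit --- the integrality $f^v\in N_{\mathbb{Z}}$ (using $I(w_0v)=\overline{I(v)}$, just as in the paper's own remark), the fact that $\langle\,,\,\rangle$ takes values in $A_{\mathbb{Z}}$ on $N_{\mathbb{Z}}\times N_{\mathbb{Z}}$, and the determinant comparison with the Steinberg basis --- so it is a faithful completion of the intended argument rather than a different one.
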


In what follows, for convenience, we will say that $w \leq c$ for $w \in W$ and $c$ a two-sided cell if $w\in c'$ with $c' \leq c$.

\begin{defn}\label{def:mc}
    For a two-sided cell $c$, let
    \begin{align}
        N^{\leq c}_\mathbb{Z} & = \mathbb{Z}[W]^{\leq c} N_\mathbb{Z} = \oplus_{y \leq c} A_{\mathbb{Z}}f_y,\\
        N^{< c}_\mathbb{Z} & = \mathbb{Z}[W]^{< c} N_\mathbb{Z} = \oplus_{y < c} A_{\mathbb{Z}}f_y,\\
        N^c_\mathbb{Z} & = N^{\leq c}_\mathbb{Z}/N^{< c}_\mathbb{Z} = \oplus_{y \in c} A_{\mathbb{Z}}f_y.
    \end{align}
\end{defn}

The well-definedness of these submodules is guaranteed by the following lemma.
\begin{lemma}\label{lem:cellmz}
    Let $c$ be a two-sided cell for $W$. Then the following properties hold.
    \begin{enumerate}
        \item We have $\mathbb{Z}[W]^{\leq c}N_\mathbb{Z} = \oplus_{y\leq c}A_{\mathbb{Z}}f_y$.
        \item The $W$-saturation of $\bigoplus_{w \in c} \mathbb{Z}f_w$ lies in $N^{<c}_\mathbb{Z} \oplus (\bigoplus_{w \in c}\mathbb{Z}f_w)$.
        \item Define $\xi\colon  A[W]^c \to N_{\mathbb{Z}}^c$ by $\xi(C_w') = f_w$. Then $\xi$ is a $W$-equivariant isomorphism.\label{part:cellmz3}
    \end{enumerate}
\end{lemma}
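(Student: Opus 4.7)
My plan is to prove all three parts together, leveraging two standard facts from Kazhdan--Lusztig theory. First, $\mathbb{Z}[W]^{\leq c}$ is a two-sided ideal of $\mathbb{Z}[W]$: the sign involution $\sigma(w)=(-1)^{\ell(w)}w$ is a ring automorphism (since $\ell$ is a mod-$2$ homomorphism on $W$) and interchanges the bases $\{C_w\}$ and $\{C_w'\}$ up to signs, transferring the classical two-sided ideal property of $\mathbb{Z}[W]_{\leq c}$ to $\mathbb{Z}[W]^{\leq c}$. Second, under the left $\mathbb{Z}[W]$-action on $\mathbb{Z}[W]^c$, each left cell $L\subseteq c$ gives a $\mathbb{Z}[W]$-submodule spanned by $\{C_y':y\in L\}$, and these submodules decompose $\mathbb{Z}[W]^c$ as a direct sum; consequently the $y$'s appearing in $uC_w'\pmod{\mathbb{Z}[W]^{<c}}$ lie in the same left cell as $w$ and hence satisfy $I(y)=I(w)$ (right descent sets being constant on left cells).

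For part (1), the inclusion $\mathbb{Z}[W]^{\leq c}N_\mathbb{Z}\subseteq\bigoplus_{y\leq c}A_\mathbb{Z}f_y$ reduces to showing $C_v'f_w\in\bigoplus_{y\leq c}A_\mathbb{Z}f_y$ for $v\leq c$ and $w\in W$. Using $f_w=C_w''\exp(\omega_{\overline{I(w)}})$ with $C_w'=C_w''a_{I(w)}$ as noted after Definition \ref{def:basis}, the two-sided ideal property gives $C_v'C_w''\in\mathbb{Z}[W]^{\leq c}$, which decomposes as a $\mathbb{Z}$-combination of $C_z'$ with $z\leq c$; this reduces by induction on the cell order to the reverse inclusion for smaller cells. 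For the reverse inclusion $\bigoplus_{y\leq c}A_\mathbb{Z}f_y\subseteq\mathbb{Z}[W]^{\leq c}N_\mathbb{Z}$, the main input is that $A_\mathbb{Z}\subseteq\mathbb{Z}[W]^{\leq c}N_\mathbb{Z}$ whenever $w_0\leq c$: the element $C_{w_0}'\exp(\lambda)=\sum_{w\in W}\exp(w\lambda)$ is the $W$-orbit sum of $\exp(\lambda)$, and orbit sums $\mathbb{Z}$-span $A_\mathbb{Z}$. Combining this with expressions of each $f_y$ as a Weyl character in $A_\mathbb{Z}$ minus a manifestly-in-ideal combination (for example, $f_{s_1}=\chi(V_{\omega_2})-C_{s_1}'\exp(\omega_1-\omega_2)$ in type $A_2$), one establishes $f_y\in\mathbb{Z}[W]^{\leq c}N_\mathbb{Z}$ inductively.

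For part (3), $\xi$ is $A_\mathbb{Z}$-linear and bijective by part (1) (both sides being free $A_\mathbb{Z}$-modules with bases indexed by $c$). For $W$-equivariance, given $u\in W$ and $w\in c$, write $uC_w'\equiv\sum_y d_y C_y'\pmod{\mathbb{Z}[W]^{<c}}$ with $d_y\in\mathbb{Z}$ and $y$ in the left cell of $w$; since $I(y)=I(w)$ for all these $y$, we compute $uf_w=\tfrac{1}{|W_{I(w)}|}(uC_w')\exp(\omega_{\overline{I(w)}})\equiv\sum_y d_y f_y\pmod{N^{<c}_\mathbb{Z}}$, matching $\xi(uC_w')$. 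Part (2) then follows immediately: $uf_w$ lies in $N^{\leq c}_\mathbb{Z}$ by part (1), and its image in $N^c_\mathbb{Z}$ has integer coefficients in $\{\bar f_y\}_{y\in c}$ by part (3), so the $W$-saturation of $\bigoplus_{w\in c}\mathbb{Z}f_w$ is contained in $N^{<c}_\mathbb{Z}\oplus\bigoplus_{w\in c}\mathbb{Z}f_w$.

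The main obstacle will be the integrality in the reverse inclusion of part (1). Over $\mathbb{Q}$ the containment $f_y\in\mathbb{Z}[W]^{\leq c}N_\mathbb{Q}$ is immediate from $|W_{I(y)}|f_y=C_y'\exp(\omega_{\overline{I(y)}})\in\mathbb{Z}[W]^{\leq c}N_\mathbb{Z}$, but upgrading to the $\mathbb{Z}$-level statement requires non-trivial cancellations that I would handle via the crucial containment $A_\mathbb{Z}\subseteq\mathbb{Z}[W]^{\leq c}N_\mathbb{Z}$ together with a careful induction on the cell order.
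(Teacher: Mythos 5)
Your reduction of part (1) to showing $C'_v f_w\in\bigoplus_{y\leq c}A_\mathbb{Z}f_y$ is fine, but the way you discharge it is circular, and this is where the real content of the lemma lies. Writing $C'_vC''_w=\sum_{z\leq c}n_zC'_z$ via the two-sided ideal property leaves you with terms $C'_z\exp(\omega_{\overline{I(w)}})$ in which $z$ may lie in the top cell $c$ itself while the weight $\omega_{\overline{I(w)}}$ has nothing to do with $I(z)$; controlling the $f$-expansion of such a term is exactly the statement you set out to prove (expanding $\exp(\omega_{\overline{I(w)}})$ in the $f$-basis and acting again by $C'_z$ just regenerates products of the same shape), and knowing the \emph{reverse} inclusion for strictly smaller cells gives no control over these coefficients, so your ``induction on the cell order'' never decreases its parameter and does not close. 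The paper breaks this circle by a global rank count rather than an inductive expansion: one has $\bigoplus_{y\leq c}A f_y\subseteq\mathbb{Z}[W]^{\leq c}N$ essentially by definition, and if the containment were proper then $\mathbb{Z}[W]^{\leq c}N$ would have rank strictly greater than $|\{y\mid y\leq c\}|$ over $A$; reducing modulo $A_+$ identifies $N/A_+N$ with $H^\bullet(G/B)$, the regular representation of $W$, inside which the image of $\mathbb{Z}[W]^{\leq c}$ (a two-sided ideal of the semisimple algebra $\mathbb{C}[W]$) has dimension exactly $|\{y\mid y\leq c\}|$, a contradiction. Some such global input is what your proposal is missing.

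For the opposite inclusion, your observation that $C'_{w_0}\exp(\lambda)$ is the $W$-orbit sum of $\exp(\lambda)$, so that $A_\mathbb{Z}\subseteq\mathbb{Z}[W]^{\leq c}N_\mathbb{Z}$ for every $c$, is correct and does speak to the genuine integrality issue created by the factor $1/|W_{I(y)}|$ (a point the paper passes over), but you only verify $f_y\in\mathbb{Z}[W]^{\leq c}N_\mathbb{Z}$ in a type $A_2$ example and give no general argument, as you yourself concede. Your treatment of part (3) (right descent sets constant on left cells, $W$-equivariance of $w\mapsto w\exp(\lambda)$ for fixed $\lambda$, decomposition of $\mathbb{Z}[W]^c$ into left-cell spans) coincides with the paper's argument and is fine, and (2) does follow formally from (1) and (3); but since both directions of (1) remain incomplete and (2)--(3) lean on (1) for bijectivity, the proposal as it stands does not prove the lemma.
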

\begin{proof}
    First note that $\bigoplus_{y\leq c}A_\mathbb{Z} f_y=\mathbb{Z}[W]^{\leq c}N$. Indeed,
  $\bigoplus_{y\leq c}A_\BZ f_y\subset\BZ[W]^{\leq c}N$ by definition of $f_y$.
  Choosing a direct complement $N=N'\oplus\bigoplus_{y\leq c}A_\BZ f_y$,
  we see that if $\bigoplus_{y\leq c}A_\BZ f_y\subsetneq\BZ[W]^{\leq c}N$, then
  $\BZ[W]^{\leq c}N$ has rank strictly bigger than $|\{y\in W\mid y\leq c\}|$. This implies that the same is true of $\BZ[W]^{\leq c}N/A_+$ as a subspace of $N/A_+ \cong H^\bullet(G/B)$ (where $A_+\subset A$ denotes the subspace generated by terms of positive degree) but this is a contradiction by the well-known structure of $H^\bullet(G/B)$ as a $W$-module.

  For any $v,w$ in a left cell $c_L\subset c$, we have $I(v)=I(w)$ by \cite[Section 8.6]{LUnequal}.
  For a fixed weight $\lambda$, the map $w\mapsto w\exp(\lambda)$ is
  clearly $W$-equivariant. It follows that the map $\BZ[W]^{c_L}\to N^{c_L}_{\mathbb{Z}}$ given by 
  $C'_w\mapsto f_w$, is $W$-equivariant. Here $\BZ[W]^{c_L}\subset\BZ[W]^c$
  is spanned by $C'_w$, $w\in c_L$. Further,
  $\BZ[W]^c=\bigoplus_{c_L\subset c}\BZ[W]^{c_L}$. It is then an isomorphism by part (1).
\end{proof}

Lemma \ref{lem:cellmz} and the general theory of two-sided cells for $W$ then imply the following.
\begin{prop}\label{prop:conditions}
    The basis $\{f_w\}_{w \in W}$ in Definition \ref{def:basis} satisfies the conditions of Lemma \ref{lem:fil} for the partial order $\leq_{\mathrm{LR}}$.
\end{prop}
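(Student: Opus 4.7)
The plan is to verify, one at a time, the three conditions of Lemma~\ref{lem:fil} for the partial order $\leq_{\mathrm{LR}}$. Its equivalence classes are the two-sided cells, and for $w$ in a two-sided cell $c$ one has $N_{\preceq w}=N^{\leq c}_\mathbb{C}$, $N_{\prec w}=N^{<c}_\mathbb{C}$ and $\mathrm{gr}_w(N)=N^c_\mathbb{C}$ in the notation of Definition~\ref{def:mc}. Essentially all the substance has been packaged into Lemma~\ref{lem:cellmz}; the proof amounts to carefully unpacking its three parts.

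For condition (i), I would use Lemma~\ref{lem:cellmz}(1), which identifies $N^{\leq c}_\mathbb{Z}$ with $\mathbb{Z}[W]^{\leq c}\cdot N_\mathbb{Z}$, and analogously for $<c$. The crux is then the standard fact that $\mathbb{Z}[W]^{\leq c}$ and $\mathbb{Z}[W]^{<c}$ are \emph{two-sided} ideals of $\mathbb{Z}[W]$ with respect to the dual canonical basis — indeed, this is essentially the defining property of the two-sided cell partition. Consequently, for $x\in W$, $b\in\mathbb{Z}[W]^{\leq c}$ and $n\in N$, we have $x\cdot(b\cdot n)=(xb)\cdot n\in\mathbb{Z}[W]^{\leq c}N$, giving the required $W$-stability of $N^{\leq c}$ and $N^{<c}$.

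For condition (ii), I would apply the $W$-equivariant isomorphism $\xi\colon A[W]^c\xrightarrow{\sim} N^c$ of Lemma~\ref{lem:cellmz}(3), under which $C_y'\mapsto \bar f_y$. Since $\bar f_y$ vanishes in $N^c$ unless $y\in c$, the $\mathbb{C}$-span of $\{\bar f_y\}_{y\preceq w}$ is the image under $\xi$ of $1\otimes\mathbb{C}[W]^c\subset A[W]^c$, which is manifestly $W$-stable since $\mathbb{C}[W]^c$ is an entire $W$-submodule.

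For condition (iii), I would invoke $\xi$ once more: because $A=N^W$ carries the trivial $W$-action, $N^c_\mathbb{C}$ is isomorphic to $A_\mathbb{C}\otimes_\mathbb{C}\mathbb{C}[W]^c$ as a $W$-module, so an irreducible $\chi\in\mathrm{Irr}(W)$ appears in $N^c_\mathbb{C}$ with nonzero multiplicity iff it appears in $\mathbb{C}[W]^c$. By Lusztig's theory of families from \cite{LBook}, every irreducible representation of $W$ appears in $\mathbb{C}[W]^c$ for exactly one two-sided cell $c$, namely the family of $\chi$, giving both existence and uniqueness. This last step, relying on the family-cell bijection, is the main substantive external input; steps (i) and (ii) are bookkeeping with Lemma~\ref{lem:cellmz}, once one keeps straight the distinction between $A$-spans and $\mathbb{C}$-spans and uses that $W$ acts trivially on $A$.
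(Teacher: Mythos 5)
Your proof is correct and takes essentially the same route as the paper, which deduces the proposition precisely from Lemma \ref{lem:cellmz} together with the standard fact that each irreducible representation of $W$ occurs in the (sign-twisted) cell module $\mathbb{C}[W]^c$ of exactly one two-sided cell; your three verifications are the natural unpacking of that deduction. The only cosmetic remark is that for condition (i) the left-ideal property of $\mathbb{Z}[W]^{\leq c}$ and $\mathbb{Z}[W]^{<c}$ already suffices, so invoking two-sidedness is harmless but unnecessary.
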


\subsection{The dual basis and the pseudo-dual basis}

Let $\{f_w^*\}_{w \in W}$ be the dual basis to $\{f_w\}_{w \in W}$ under the perfect pairing $\langle, \rangle$.

Although it is tempting to conjecture that $f_w^* = f^w$ (perhaps modulo $N^{<c}_{\mathbb{Z}}$ for $c$ the two-sided cell containing $w$), this is only a small rank phenomenon. We checked the following proposition case-by-case in SageMath.
\begin{prop}\label{prop:dualpseudo}
    In Types $A_1$, $A_2$, $B_2$, $G_2$, $A_3$,
    \begin{equation}
        \bar f_w^* = \bar f^w \in \mathrm{gr}_wN.\label{eqn:dualispseudo}
    \end{equation}
    However, this is not true in general, and in particular fails to hold in Type $A_4$, for $w = s_2s_3s_2$.
\end{prop}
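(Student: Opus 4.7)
The plan is a direct computational verification, made tractable by Lemma~\ref{lem:gram}. That lemma shows the Gram matrix $G_{uv} := \langle f_u, f^v \rangle$ is upper-unitriangular when rows and columns are ordered by any refinement of the height function $h(w) = (\rho, \omega_{I(w)})$. Expanding in the dual basis then gives
\[
f^v \;=\; f_v^* \;+\; \sum_{u:\, h(u) < h(v)} \langle f_u, f^v\rangle\, f_u^*.
\]
Hence the proposition reduces to checking, for each $v$, whether the error $f^v - f_v^*$ lies in $N^{< c(v)}_\mathbb{Z}$, where $c(v)$ is the two-sided cell of $v$; equivalently, whether every nonzero off-diagonal pairing $\langle f_u, f^v\rangle$ with $h(u) < h(v)$ contributes only through elements $u$ lying strictly below $c(v)$ in the two-sided cell order.

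For the five affirmative types $A_1, A_2, B_2, G_2, A_3$, I would implement this check in SageMath: enumerate $W$, compute the descent sets $I(w)$, construct the Kazhdan-Lusztig basis $\{C'_w\}$ and its dual $\{C^w = w_0 C'_{w_0 w}\}$ in $\mathbb{Z}[W]$, form $f_w$ and $f^w$ in $\mathbb{Z}[T]$ via Definition~\ref{def:basis}, invert the Gram matrix under $\langle\,,\,\rangle$ to obtain the dual basis $\{f_w^*\}$, and finally verify that $f^v - f_v^* \in N^{<c(v)}_\mathbb{Z}$ for every $v$. In these small-rank cases, the height filtration interacts compatibly enough with the two-sided cell filtration for the check to succeed, and the computation is purely mechanical.

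For the counterexample at $v = s_2 s_3 s_2$ in type $A_4$, one has $I(v) = \{s_2, s_3\}$ so $f^v = \tfrac{1}{4}\, w_0 C'_{w_0 v}(\exp(\omega_{I(v)}))$. I would identify the two-sided cell $c$ containing $v$ via Lusztig's $a$-function on $S_5$, then search within the set $\{u \in c : h(u) < h(v)\}$ for a $u$ such that $\langle f_u, f^v\rangle \ne 0$ and the corresponding $f_u^*$ has nonzero image in $\mathrm{gr}_v N$. Exhibiting a single such $u$ via an explicit Weyl-character-formula calculation closes the negative half of the statement. The main obstacle is bookkeeping rather than conceptual: expanding $C'_{w_0 v} \in \mathbb{Z}[S_5]$ requires a sum of up to $120$ terms with Kazhdan-Lusztig polynomial coefficients, and the subsequent Weyl pairing computation is similarly large; however, both steps are handled cleanly by built-in SageMath routines, which is precisely the authors' approach.
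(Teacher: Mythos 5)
Your proposal is correct and takes essentially the same route as the paper: the authors simply verify the statement case-by-case in SageMath, using Lemma~\ref{lem:gram} (Gram--Schmidt against the pseudo-dual basis $\{f^y\}$) to compute $f_w^*$ and then comparing with $f^w$ modulo the cell filtration, exactly as you outline. The only small caution is that for the negative half one must check that the whole difference $f^v - f_v^*$ has nonzero image in $\mathrm{gr}_v N$ (a single nonzero off-diagonal pairing could in principle cancel), but this is subsumed in the explicit computation you propose.
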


Of note for the purposes of studying Lusztig's conjectures is the fact that in all of the cases for which Lusztig computed the values $M_w$ in \cite{L}, the equation \eqref{eqn:dualispseudo} holds. This discrepancy between $f_w^*$ and $f^w$ in general type provides an explanation for the failure of \cite[Conjecture 2.3ii)]{L} for our elements $M_w$ as defined in Definition \ref{def:mw}; this is the subject of our Section \ref{sec:symmetry} to follow. That being said, Lemma \ref{lem:gram} proves useful in computing $\{f_w^*\}_{w \in W}$ in terms of $\{f^w\}_{w \in W}$ in practice, as one can apply the Gram-Schmidt algorithm to the pseudo-dual basis $\{f^y\}_{y \in W}$ to compute $f_w^*$. 

\subsection{Conclusion in Type $A$}

Together, Lemma \ref{lem:fil} and Proposition \ref{prop:conditions} yield formulas for $\tilde{V}_\chi$ when $\chi$ is an irreducible principal series representation. In the next section, we will show how to rephrase this fact in order to prove Theorem~\ref{thm:main}. Although for the general case we will need some more setup to state the final answer, we can already state it in the case where $G = SL(n)$.
\begin{corollary}
For $G=SL(n)$ and an irreducible representation $\chi$ of $W$ we have
$$\tilde V_\chi=\sum_y \langle [q]^*f_y,f_y^*\rangle$$
where the sum runs over all involutions in the 2-sided cell corresponding to $\chi$.
\end{corollary}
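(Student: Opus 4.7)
The plan is to combine Lemma~\ref{lem:fil} and Proposition~\ref{prop:conditions} with the type-$A$ cellular structure, then reduce the trace of $\tilde\phi$ on $\mathrm{gr}_c(N)$ to a sum over involutions via a Schur-lemma argument on left-cell blocks.

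First, let $c$ denote the two-sided cell associated to $\chi$. By Lemma~\ref{lem:fil} and Proposition~\ref{prop:conditions}, $\tilde V_\chi=\tfrac{1}{\dim\chi}\mathrm{tr}(\tilde\phi,[\mathrm{gr}_c(N):\chi])$. In type $A$ the cell module $\BZ[W]^c$ has $\chi$ as its only irreducible constituent, so the isotypic component $[\mathrm{gr}_c(N):\chi]$ is all of $\mathrm{gr}_c(N)$. Writing $[q]^*f_w=\sum_v\alpha_{v,w}f_v$ in $N$, the fact that $\tilde\phi$ preserves the cell filtration forces $\alpha_{v,w}=0$ for $v\not\leq_{\mathrm{LR}}w$; together with the duality $\langle f_v,f_w^*\rangle=\delta_{v,w}$ this identifies the diagonal entry as $\alpha_{w,w}=\langle[q]^*f_w,f_w^*\rangle$, so that $\mathrm{tr}(\tilde\phi,\mathrm{gr}_c(N))=\sum_{w\in c}\langle[q]^*f_w,f_w^*\rangle$.

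Next, using Lemma~\ref{lem:cellmz}(\ref{part:cellmz3}), I would decompose $\mathrm{gr}_c(N)\cong\bigoplus_{c_L\subset c}M(c_L)$ as a direct sum of left-$W$-submodules indexed by the left cells in $c$, with $M(c_L)=\bigoplus_{w\in c_L}A\bar f_w$. In type $A$ each $M(c_L)$ is isomorphic to $V_\chi\otimes_{\mathbb{C}} A$ as an $A[W]$-module. Since $\tilde\phi$ is $A$-linear and $W$-equivariant on $\mathrm{gr}_c(N)$, Schur's lemma applied blockwise shows that the diagonal block $\tilde\phi_{c_L,c_L}\colon M(c_L)\to M(c_L)$ is multiplication by some scalar $a_{c_L}\in A$, while off-diagonal blocks $M(c_L)\to M(c_L')$ land in a distinct summand. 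Hence the diagonal entry $\alpha_{w,w}$ depends only on the left cell of $w$ and equals $a_{c_L}$ for every $w\in c_L$.

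To conclude, I invoke the standard type-$A$ facts that every left cell in $c$ has size $\dim\chi$, contains exactly one Duflo involution, and that every involution in $c$ is a Duflo involution; in particular $c$ contains exactly $\dim\chi$ involutions, one per left cell. Substituting gives
\[\sum_{w\in c}\langle[q]^*f_w,f_w^*\rangle \;=\; \sum_{c_L\subset c}|c_L|\,a_{c_L} \;=\; (\dim\chi)\sum_{y\text{ inv.\ in }c}\langle[q]^*f_y,f_y^*\rangle,\]
and dividing by $\dim\chi$ yields the stated identity. The main technical obstacle is the Schur-lemma step, which requires both that $\mathrm{gr}_c(N)$ be $\chi$-isotypic and that each $M(c_L)$ be a left $W$-submodule; both are type-$A$ phenomena, the latter resting on the incomparability of distinct left cells inside a fixed two-sided cell (implicit in Lemma~\ref{lem:cellmz}).
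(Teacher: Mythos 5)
Your argument is correct and takes essentially the same route as the paper's: both reduce via Lemma~\ref{lem:fil} and Proposition~\ref{prop:conditions} to the trace of $\tilde\phi$ on $\mathrm{gr}_c(N)$, use the type-$A$ decomposition into left-cell modules (each isomorphic to $\chi$ and containing a unique involution), and apply Schur's lemma to the $W$-equivariant, $A$-linear map to identify each left cell's contribution with $\langle [q]^*f_y, f_y^*\rangle$ at its involution $y$. The only difference is bookkeeping: you sum diagonal matrix entries over all of $c$ and then use constancy on left cells together with $|c_L|=\dim\chi$, while the paper sums the block traces over the involutions directly.
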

\begin{proof}
By Lemma \ref{lem:fil} and Proposition \ref{prop:conditions}, we have
\begin{align}
    \tilde{V}_\chi = \mathrm{tr}(\tilde{\phi}, [\mathrm{gr}_w(N) : \chi]),
\end{align}
where $w$ is some element of the unique two-sided cell for which $[\mathrm{gr}_w(N) : \chi] \neq 0$. Since we are in Type A, $\mathrm{gr}_w(N)$ breaks into a direct sum of left cell representations, each of which is isomorphic to $\chi$ as a $\mathbb{C}[W]$-module; further, each has the form $\mathbb{C}[W]f_y$ for some unique involution $y \sim_\mathrm{L} w$. Since $\tilde{\phi}$ intertwines the $W$-action, we see that
\begin{align}
    \frac{1}{\dim \chi}\mathrm{tr}(\tilde{\phi}, \mathrm{gr}_w(N)) & = \frac{1}{\dim \chi}\sum_y \mathrm{tr}(\tilde{\phi}, \mathbb{C}[W]f_y)\\
    & = \sum_y \langle [q]^*f_y, f_y^*\rangle
\end{align}
with the second line coming from Schur's lemma for $\tilde\phi$ restricted to the irreducible representation $\chi$, and where the sum runs over all involutions in the $2$-sided cell containing $w$.
\end{proof}
In the next section, we explain the proof of Theorem \ref{thm:main} in general type.

\section{Proof of Lusztig's conjecture}

\subsection{Definition of the elements $M_w$}\label{sec:defmw}

For any $w \in W$, let $\delta(w)$ be the degree of the Kazhdan-Lusztig polynomial $P_{1,w}$. It was shown in \cite{Jos} that for each left cell of $W$, the function $w \mapsto \ell(w) - 2\delta(w)$ reaches its minimum at a unique element of this cell, which is called the \emph{Duflo involution}. It is shown in \cite{LCellsII} that $\ell(w) - 2\delta(w) = c(w)$ for $c(w)$ in the present paper denoting the value on $w$ of Lusztig's $a$-function on the Weyl group introduced in \cite{LCellsI}, which provides an alternate characterization of Duflo involutions.

\begin{defn}\label{def:mw}
    For $w \in \mathcal{J}$, let 
    \begin{equation*}
        M_w = \langle [q]^*f_{w_0d}, f_{w_0w}^*\rangle,
    \end{equation*}
    where $d$ is the unique Duflo involution in the same left cell as $w$.
\end{defn}

\begin{lemma}\label{lem:mwweyl}
    For any $w \in \mathcal{J}$, there exists some subset $\Lambda_w \subset X^*(T)$ independent of $q$ and some coefficients $r_\lambda \in \mathbb{Z}$ such that
    \begin{equation}\label{eqn:mwlambdaw}
        M_w = \sum_{\lambda \in \Lambda_w} r_\lambda V_{(q-1)\varpi_{I(w)} + \lambda}.
    \end{equation}
\end{lemma}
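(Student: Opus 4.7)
The plan is to compute $M_w = \langle [q]^\ast f_{w_0 d}, f_{w_0 w}^\ast\rangle$ by noting that this quantity is, by definition of the dual basis, simply the coefficient of $f_{w_0 w}$ when $[q]^\ast f_{w_0 d}$ is expanded in the basis $\{f_y\}_{y\in W}$. Integrality is then immediate: $f_{w_0 d}\in N_{\mathbb{Z}}$ and hence $[q]^\ast f_{w_0 d}\in N_{\mathbb{Z}}$; since $\{f_y\}$ is an $A_{\mathbb{Z}}$-basis of $N_{\mathbb{Z}}$ by the Corollary to Lemma~\ref{lem:gram}, the coefficient lies in $A_{\mathbb{Z}}$, which has $\mathbb{Z}$-basis the Weyl characters $V_\mu$. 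The real content of the lemma is to control the support of $M_w$ in this basis.

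Because $w$ and $d$ lie in the same left cell they share a right descent set, so $I(d)=I(w)$ by \cite[Section~8.6]{LUnequal}; combined with the identity $\overline{I(w_0 v)}=I(v)$, Definition~\ref{def:basis} gives
\[ f_{w_0 d} \;=\; \tfrac{1}{|W_{\overline{I(w)}}|}\, C'_{w_0 d}\bigl(e^{\omega_{I(w)}}\bigr) \;=\; \tfrac{1}{|W_{\overline{I(w)}}|}\sum_{z\in W} m_z\, e^{z(\omega_{I(w)})}, \]
where $C'_{w_0 d}=\sum_z m_z\, z$ has integer coefficients. Since $[q]^\ast$ commutes with the $W$-action, applying it just replaces $e^{z(\omega_{I(w)})}$ with $e^{q\, z(\omega_{I(w)})}$. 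The element $f_{w_0 w}^\ast$ is defined intrinsically by the $q$-independent pairing $\langle\,,\,\rangle$, so it is a fixed element of $N$; expand it as a finite sum $f_{w_0 w}^\ast=\sum_\nu c_\nu\, e^\nu$ with $c_\nu\in\mathbb{C}$ and $\nu$ ranging over a finite, $q$-independent set of weights.

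By the Weyl character formula, each individual pairing $\langle e^{q z(\omega_{I(w)})},e^\nu\rangle$ is either zero (when $q z(\omega_{I(w)})+\nu+\rho$ is $W$-singular) or equals $(-1)^{\ell(u)}V_{u(q z(\omega_{I(w)})+\nu+\rho)-\rho}$ for the unique $u\in W$ carrying the argument into the strictly dominant chamber. For $q$ sufficiently large, the leading term $q\, z(\omega_{I(w)})$ dictates the chamber, so $u$ must send $z(\omega_{I(w)})$ to $\omega_{I(w)}$, i.e.\ $uz\in W_{\overline{I(w)}}=\operatorname{Stab}_W(\omega_{I(w)})$. Hence
\[ u\bigl(q\, z(\omega_{I(w)})+\nu+\rho\bigr)-\rho \;=\; q\,\omega_{I(w)}+u(\nu+\rho)-\rho \;=\; (q-1)\omega_{I(w)}+\lambda_{z,\nu}, \]
where $\lambda_{z,\nu}:=\omega_{I(w)}+u(\nu+\rho)-\rho$ is a $q$-independent weight depending only on the coset $zW_{\overline{I(w)}}$ and on $\nu$. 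Taking $\Lambda_w:=\{\lambda_{z,\nu}\}$ (a finite set) and grouping contributions with matching $\lambda$ produces the claimed expansion.

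The main obstacle is this chamber-analysis step: one must verify uniformly in $q$ that $u$ depends on $z$ only through its coset modulo $W_{\overline{I(w)}}$, so that $\Lambda_w$ is genuinely $q$-independent. This rests on the fact that for $q$ large enough the perturbation $\nu+\rho$ is dwarfed by $q\, z(\omega_{I(w)})$, placing the sum in the open Weyl chamber of $z(\omega_{I(w)})$; small $q$ then pose no issue since the $r_\lambda$ are, by the integrality argument above, fixed integers determined by the coordinate map $N_\mathbb{Z}\to A_\mathbb{Z}$, so the formula obtained for $q\gg0$ persists.
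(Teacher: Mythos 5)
Your argument is correct and follows essentially the same route as the paper's proof: using $I(d)=I(w)$ to write $[q]^*f_{w_0d}$ as an integral combination of monomials in the $W$-orbit of $\exp(q\omega_{I(w)})$, pairing these against the fixed, $q$-independent monomial support of $f_{w_0w}^*$ via the Weyl character formula, and observing that every resulting highest weight has the form $(q-1)\omega_{I(w)}+\lambda$ with $\lambda$ in a $q$-independent set; your explicit chamber analysis and the integrality argument via the $A_{\mathbb{Z}}$-basis property of $\{f_y\}$ only make explicit what the paper leaves implicit. One small slip: with the paper's pairing one has $\langle e^{a},e^{b}\rangle=0$ when $a+b$ is singular and otherwise $\langle e^{a},e^{b}\rangle=(-1)^{\ell(u)}V_{u(a+b)-\rho}$, so the extra $\rho$ inside your expression $u\bigl(qz(\omega_{I(w)})+\nu+\rho\bigr)$ should be removed; this is an inessential bookkeeping shift that changes only the description of $\lambda_{z,\nu}$ and not the conclusion.
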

\begin{proof}
    Let $w \in \mathcal{J}$, and let $d$ be the Duflo involution in the same left cell as $w$. First, note that $[q]^*f_{w_0d}$ is a linear combination of monomials from the $W$-orbit of $\exp(q\varpi_{I(w)})$, since recall that $\overline{I(w_0d)} = I(d) = I(w)$. The pairing of such a monomial $\exp(\nu)$ with another monomial $\exp(\mu)$ appearing with nonzero coefficient in $f_{w_0w}^*$ equals $V_{y(\nu + \mu) - \rho} = V_{(q-1)\varpi_{I(w)} + \lambda}$ for some $\lambda$, where $y \in W$ is chosen such that $y(\nu + \mu)$ is dominant. We can write
    \begin{align*}
        [q]^*f_{w_0d} & = \sum_{\nu \in W \cdot q\varpi_{I(w)}} a_\nu \exp(\nu),\\
        f_{w_0w}^* & = \sum_{\mu \in \Lambda_w'} b_\mu \exp(\mu),
    \end{align*}
    where $\Lambda_w'$ is independent of $q$, as it is defined as the set of $\mu$ such that $\exp(\mu)$ occurs with nonzero coefficient in $f_{w_0w}^*$.
    
    Then equation \eqref{eqn:mwlambdaw} holds, where $\Lambda_w$ is the set of all $\lambda$ for which $\mu + \nu$ and $(q-1)\varpi_{I(w)} + \rho + \lambda$ lie in the same orbit, where $\nu \in W\cdot q\varpi_{I(w)}$, $\mu \in \Lambda_w'$, and for every such $\lambda$,
    \begin{align}
        r_\lambda & = \sum_{\mu, \nu} \pm a_\nu b_\mu.
    \end{align}
\end{proof}

\subsection{Cell modules and the asymptotic Hecke algebra}\label{sec:asymptotic}

The basis $\{C_w\}_{w \in W}$ for $\mathbb{Z}[W]$ we consider in the present paper comes from the Kazhdan-Lusztig basis, which we still refer to as $C_w$, of the Hecke algebra $\mathcal{H}$ over $\mathbb{Z}[v, v^{-1}]$. We write $h_{x,y,z} \in \mathbb{Z}[v, v^{-1}]$ for the structure constants of multiplication in $\mathcal{H}$ with respect to the basis $\{C_w\}_{w \in W}$. Recalling again Lusztig's $a$-function defined in \cite{LCellsI} (which is constant on two-sided cells), we define for any $x, y, z \in W$ the integer $\gamma_{x,y,z} \in \mathbb{Z}$ to be the constant term of $v^{a(z)}h_{x,y,z^{-1}} \in \mathbb{Z}[v]$.

We now recall the definition of the asymptotic Hecke algebra $J$. Let $J$ be the free abelian group with basis $\{t_w\}_{w \in W}$, equipped with a ring structure given by
\begin{align}
    t_xt_y & = \sum_z \gamma_{x,y,z}t_{z^{-1}}.
\end{align}
This multiplication is associative, and $J$ has identity element $\sum_{d \in \mathcal{D}} t_d$ (here $\mathcal{D}\subset W$ is the set of all Duflo involutions). Given any two-sided cell $c$ of $W$, let $J_c$ be the two-sided ideal $\mathrm{span}\{t_w~|~w \in c\}$. We then have a direct sum decomposition $J = \oplus_{c} J_c$, where each $J_c$ has unit element given by
\begin{align}
    1_{J_c} & = \sum_{d \in \mathcal{D}\cap c} t_d.
\end{align}
One can show that there is a natural correspondence between irreducible modules over $\mathbb{C}[W]$, $\mathcal{H}$, and $J$, cf.\ for example the explanation given in \cite[Sections 8, 9]{WheresCurtis}. The following result appears in \cite{LCellsII}.

\begin{prop}[\cite{LCellsII}]\label{prop:bimodule}
    For a two-sided cell $c$, there exists a $(\mathbb{Z}[W]_c, J_c)$-bimodule $B_c$ (the ``regular" bimodule) with basis $b_w$, $w \in c$. It is defined so that the maps
    \begin{align}
        \mathbb{Z}[W]_c & \to B_c & C_w & \mapsto b_w\\
        J_c & \to B_c & t_w & \mapsto b_w
    \end{align}
    are each isomorphisms, as a left $\mathbb{Z}[W]_c$-module and as a right $J_c$-module respectively. The action of the algebras $\mathbb{Z}[W]_c$ and $J_c$ on $B_c$ are mutual centralizers for one another.
\end{prop}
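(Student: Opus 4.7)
My plan is to construct $B_c$ explicitly as the free abelian group with basis $\{b_w\}_{w \in c}$, equip it with both a left $\mathbb{Z}[W]_c$-action and a right $J_c$-action, and then verify the required properties. First I would define the right $J_c$-action by the formula $b_w \cdot t_x = \sum_{z \in c} \gamma_{w, x, z^{-1}} b_z$; associativity follows immediately from the associativity of multiplication in $J$. Simultaneously, I would define the left $\mathbb{Z}[W]_c$-action by identifying $B_c$ with $\mathbb{Z}[W]_c$ via $b_w \leftrightarrow C_w$ and letting $\mathbb{Z}[W]_c$ act on itself by left multiplication. With these conventions, the first isomorphism $\mathbb{Z}[W]_c \to B_c$ is tautological, and the second isomorphism $J_c \to B_c$ holds because the defining formula for the right $J_c$-action on $B_c$ reproduces the multiplication table of $J_c$ itself under the correspondence $b_w \leftrightarrow t_w$.

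The main obstacle is to verify that these two actions commute, i.e., that $(C_x C_w) \cdot t_y \equiv C_x \cdot (b_w \cdot t_y) \pmod{\mathbb{Z}[W]_{<c}}$ for all $x \in W$ and $w, y \in c$. Reading off the coefficient of $C_z$ (for $z \in c$) on both sides, this reduces to the numerical identity
\begin{equation*}
\sum_{u \in c} h_{x, w, u^{-1}}(1)\, \gamma_{u, y, z^{-1}} \;=\; \sum_{u' \in c} \gamma_{w, y, (u')^{-1}}\, h_{x, u', z^{-1}}(1),
\end{equation*}
where $h_{a,b,c}(1)$ denotes the specialization of the Hecke structure constant at $v = 1$. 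I would prove this by lifting to the Hecke algebra $\mathcal{H}$ over $\mathbb{Z}[v, v^{-1}]$ and extracting the coefficient of $v^{-a(c)}$ from the associativity relation $C_x(C_w C_y) = (C_x C_w) C_y$. Because $\gamma_{x, y, z}$ is defined as the leading coefficient of $v^{a(z)} h_{x, y, z^{-1}}$ and $a(\cdot)$ is constant on $c$, the leading-order terms of this associativity relation over $c$ collapse to precisely the asserted identity; Lusztig's boundedness properties for the $a$-function ensure that no terms from cells strictly below $c$ can intrude at the leading order.

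For the mutual centralizer property, I would pass to $\mathbb{Q}$-coefficients and invoke Lusztig's ring isomorphism $\mathbb{Q}[W] \xrightarrow{\sim} J \otimes_{\mathbb{Z}} \mathbb{Q}$, which restricts to an isomorphism on the $c$-summands. Under this identification, $B_c \otimes \mathbb{Q}$ becomes the regular bimodule for the semisimple $\mathbb{Q}$-algebra $\mathbb{Q}[W]_c$ over itself, and the mutual centralizer property then follows from the double centralizer theorem applied to the regular representation of a semisimple algebra. Since $B_c$ is torsion-free and both module structures on it are free of the expected rank, the centralizer statement descends from $\mathbb{Q}$ to $\mathbb{Z}$.
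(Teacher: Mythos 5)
The paper does not actually prove this proposition; it quotes it from \cite{LCellsII}, so your argument has to stand entirely on its own — and its central step does not. The whole depth of the statement is concentrated in the commutation of the two actions, and your proposed derivation of the identity $\sum_{u} h_{x,w,u}(1)\,\gamma_{u,y,z^{-1}} = \sum_{u'} \gamma_{w,y,(u')^{-1}}\, h_{x,u',z}(1)$ does not work. One-variable associativity $(C_xC_w)C_y = C_x(C_wC_y)$ in $\mathcal{H}$ gives the identity of Laurent polynomials $\sum_{u} h_{x,w,u}h_{u,y,z} = \sum_{u'} h_{w,y,u'}h_{x,u',z}$ in the single variable $v$. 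The identity you need treats the two factors asymmetrically: one factor is specialized at $v=1$ (i.e.\ \emph{all} of its coefficients are summed), while the other is replaced by its single extreme coefficient $\gamma$. Extracting the coefficient of $v^{-a(c)}$ (or of any one power of $v$) from the one-variable identity yields a convolution $\sum_{i+j=-a(c)}$ of coefficients of the two factors — e.g.\ the term with $j=-a(c)$ forces $i=0$, so you pick up only the constant term of $h_{x,w,u}$, not $h_{x,w,u}(1)$ — and no degree bound on the $h$'s decouples this, since $h_{x,w,u}$ is not concentrated in a single degree. This is exactly why Lusztig formulates the relevant compatibility (his property P15) with two independent indeterminates, so that one factor can be specialized while the leading coefficient is extracted from the other; P15 is not a formal consequence of associativity, and its proof even in the equal-parameter Weyl group case requires the positivity of the structure constants $h_{x,y,z}$ (geometric input) together with the full suite of $a$-function properties. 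As written, the heart of your proof is a restatement of the theorem to be proved.

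Two further, smaller issues. In the double-centralizer step, the isomorphism $\mathbb{Q}[W]\xrightarrow{\sim} J\otimes_{\mathbb{Z}}\mathbb{Q}$ is induced by Lusztig's homomorphism $C_x\mapsto \sum_{d,z} h_{x,d,z}\,t_z$ specialized at $v=1$, not by the naive map $C_w\mapsto t_w$ you use to build $B_c$; so the assertion that $B_c\otimes\mathbb{Q}$ ``becomes the regular bimodule'' under this identification is precisely what needs proof (equivalently, that the left $\mathbb{Q}[W]_c$-action surjects onto left multiplications by $J_c\otimes\mathbb{Q}$), and it again rests on the commutation you have not established. Finally, a mutual-centralizer statement does not descend from $\mathbb{Q}$ to $\mathbb{Z}$ by freeness alone: integrally, the centralizer of the right $J_c$-action on $B_c\cong J_c$ is all left multiplications by $J_c$, while the image of $\mathbb{Z}[W]_c$ is in general a proper subring (the $v=1$ comparison map is an isomorphism only after tensoring with $\mathbb{Q}$), so the integral form of the claim must be phrased and justified with care. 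For the purposes of this paper the correct move is the one the authors make: cite \cite{LCellsII} rather than attempt a short self-contained proof.
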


The identification $\xi \colon  A[W]^c \to N_{\mathbb{Z}}^c$ from part (3) of Lemma \ref{lem:cellmz} then implies the following.
\begin{corollary}
The map
\begin{align*}
    \eta \colon  A[B_c] & \to N^{w_0c}\\
    t_w & \mapsto f_{w_0w}
\end{align*}
is an isomorphism of $A$-modules which intertwines the $W$-action after twisting by the sign representation. As a result, there is a well-defined action of $A[J_c]$ on $N^{w_0c}$.
\end{corollary}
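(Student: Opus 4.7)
The plan is to factor $\eta$ through the $W$-equivariant isomorphism $\xi$ supplied by Lemma~\ref{lem:cellmz}(3). Applying that lemma with $c$ replaced by the two-sided cell $w_0 c$, one obtains a $W$-equivariant $A$-module isomorphism $\xi \colon A[W]^{w_0 c} \to N^{w_0 c}$ with $\xi(C_v') = f_v$ for $v \in w_0 c$; in particular $\xi(C_{w_0 w}') = f_{w_0 w}$. Thus $\eta$ is the composition $A[B_c] \xrightarrow{\alpha} A[W]^{w_0 c} \xrightarrow{\xi} N^{w_0 c}$ where $\alpha$ sends $b_w = t_w$ (under the identifications of Proposition~\ref{prop:bimodule}) to $C_{w_0 w}'$. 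The $A$-module isomorphism claim for $\eta$ is then immediate: both source and target are free $A$-modules of rank $|c|$ with bases matched by $\alpha$ and $\xi$.

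To verify the $W$-equivariance up to sign, I would use the identification $B_c \cong \mathbb{Z}[W]_c$ from Proposition~\ref{prop:bimodule} (with $b_w \leftrightarrow C_w$) to reformulate the claim as follows: the $\mathbb{Z}$-linear map $\mathbb{Z}[W]_c \to \mathbb{Z}[W]^{w_0 c}$ sending $C_w \mapsto C_{w_0 w}'$ intertwines the left $W$-action up to the sign character. The main tool for this is the identity $C^w = w_0 C_{w_0 w}'$ recalled in the setup, which rewrites the target as $C_{w_0 w}' = w_0 C^w$. Combining the left $W$-invariance of the trace pairing $(\,\cdot\,,\,\cdot\,)$ — which makes $\{C^w\}$ the dual basis to $\{C_w\}$ — with the relation $C_w' = \sigma(C_w)$ and the multiplicative behavior of $\sigma$, one can track the $W$-action across the map. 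The sign twist emerges naturally because $\mathbb{Z}[W]^{w_0 c}$ is $\mathbb{Z}[W]_{w_0 c}$ tensored with the sign representation, and because the cell modules $\mathbb{Z}[W]_c$ and $\mathbb{Z}[W]_{w_0 c}$ differ by a sign twist — a standard input from Lusztig's theory of cells.

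The final assertion, a well-defined $A[J_c]$-action on $N^{w_0 c}$, will then follow by transport: by Proposition~\ref{prop:bimodule} the right $J_c$-action on $B_c$ commutes with the left $W$-action (the two algebras are mutual centralizers in $\mathrm{End}(B_c)$), and transporting this action via $\eta$ produces a right $J_c$-action on $N^{w_0 c}$. Since the sign character takes values in $\{\pm 1\}$, the signs introduced by $\eta$ and $\eta^{-1}$ in this transport cancel, so the transported $J_c$-action commutes with the natural $W$-action on $N^{w_0 c}$, making it well-defined. I expect the main obstacle to be the verification in the second paragraph: showing that $C_w \mapsto C_{w_0 w}'$ intertwines the $W$-action up to sign rests on a nontrivial identity tying together the KL basis, its dual $\{C^w\}$, and the involution $w \mapsto w_0 w$ on two-sided cells, and is not a purely formal consequence of the setup.
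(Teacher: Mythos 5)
Your skeleton does match the paper's (very terse) derivation: the corollary is meant to follow by combining the identifications $b_w \leftrightarrow t_w \leftrightarrow C_w$ of Proposition \ref{prop:bimodule} with the $W$-equivariant isomorphism $\xi$ of Lemma \ref{lem:cellmz}\eqref{part:cellmz3} applied to the cell $w_0c$, and your last paragraph (transporting the right $J_c$-action, which commutes with the $W$-action by the mutual-centralizer statement, so that a character twist is harmless) is exactly the intended reading; the $A$-module isomorphism part is immediate, as you say.

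The gap is your second paragraph, which is where the entire content of the statement sits once $\xi$ is invoked: one must show that the basis-level map $\mathbb{Z}[W]_c \to \mathbb{Z}[W]^{w_0c}$, $\bar C_w \mapsto \bar C'_{w_0w}$, intertwines the left $W$-action up to the sign character, and you do not prove this — you list tools ($C^w = w_0C'_{w_0w}$, invariance of the trace pairing, $\sigma$) and assert the conclusion, flagging it yourself as the main obstacle. Moreover, the heuristic you give for the sign twist is self-cancelling: the two facts you invoke, $\mathbb{Z}[W]^{w_0c}\cong\mathbb{Z}[W]_{w_0c}\otimes\mathrm{sgn}$ and $\mathbb{Z}[W]_{w_0c}\cong\mathbb{Z}[W]_c\otimes\mathrm{sgn}$, compose to an abstract isomorphism $\mathbb{Z}[W]^{w_0c}\cong\mathbb{Z}[W]_c$ with no twist, so they cannot produce the twist by themselves; and in any case they compare modules only up to abstract isomorphism and say nothing about the particular map $\bar C_w\mapsto\bar C'_{w_0w}$. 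That map is genuinely sensitive to normalizations: in the middle two-sided cell of type $A_2$, with $C_s = s-1$ and $C'_s = s+1$, one has $s_2\bar C_{s_1} = \bar C_{s_1} + \bar C_{s_2s_1}$ while $s_2\bar C'_{s_1s_2} = \bar C'_{s_2} - \bar C'_{s_1s_2}$ and $w_0s_1 = s_1s_2$, $w_0s_2s_1 = s_2$, so the unmodified assignment $\bar C_w\mapsto\bar C'_{w_0w}$ intertwines neither the plain nor the sign-twisted action; it only becomes a sign-twisted intertwiner after rescaling basis vectors by signs such as $(-1)^{\ell(w)}$ (equivalently, after fixing once and for all the $\sigma$-normalization relating $C_w$ and $C'_w$). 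So the verification you defer is not routine bookkeeping that "one can track"; it is the actual proof, and the route you sketch does not close it without this explicit sign analysis (or an argument of Lusztig cell-module type that supplies it).
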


In the formulation of \cite[Conjecture 2.3]{L}, Lusztig makes use of a virtual representation $R_{\alpha_w}$ defined in terms of the asymptotic Hecke algebra for any $w \in \mathcal{J}$. To recall it, we first recall the coefficients $c_{w, \chi}$ appearing in loc.\ cit.

\begin{defn}\label{def:raw}
For $c$ a two-sided cell of $W$, and $w \in c$,
\begin{enumerate}
    \item For any irreducible representation $\chi$ of $W$ occurring with nonzero multiplicity in $\mathbb{Z}[W]_c$, let $c_{w,\chi}$ be the trace of $t_w$ in the corresponding representation of $J_c$. 
    \item Let
    \begin{align*}
        R_{\alpha_w} & = \frac{1}{|W|} \sum_{\substack{\chi\in \mathrm{Irr}(W)\\ y \in W}} c_{w,\chi}\mathrm{tr}(y, \chi) R_y,\\
        & = \sum_{\chi \in \mathrm{Irr}(W)} c_{w,\chi} V_{\chi}
    \end{align*}
    where $R_y$ is the unipotent Deligne-Lusztig character associated to $y$ as defined in \cite[Section 1.5]{DL}.
\end{enumerate}
\end{defn}

\subsection{Proof of Lusztig's main conjecture}
In this section, we prove Conjecture~\ref{conj:lusztig}, in the form of Corollary \ref{cor:mainagain} below. 

\begin{prop}\label{prop:jring}
    For any two-sided cell $c$, the endomorphism $\tilde{\phi}|_{\mathrm{gr}_c(N)}$ coincides with the right action of the element $h \in A \otimes_{\mathbb{Z}} J_c$ given by
    \begin{align}
        h & = \sum_{w \in c} M_w t_w.
    \end{align}
\end{prop}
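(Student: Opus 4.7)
The plan is to identify $\tilde\phi$ on the cell subquotient with right multiplication by an element $h$ of $A\otimes J_c$ using the bimodule structure on $B_c$, and then to read off the coefficients of $h$ by evaluating on basis vectors indexed by Duflo involutions.

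By Proposition~\ref{prop:conditions}, the basis $\{f_w\}_{w\in W}$ satisfies the hypotheses of Lemma~\ref{lem:fil} for the two-sided cell order, so $\tilde\phi$ preserves $N^{\leq c}$ and $N^{<c}$ and descends to an $A$-linear endomorphism of $\mathrm{gr}_c(N)$ commuting with the $W$-action. Under the isomorphism $\eta$ of the corollary following Lemma~\ref{lem:cellmz}, this subquotient is identified (after the sign twist) with $A\otimes B_c$, and Proposition~\ref{prop:bimodule} asserts that the left $\mathbb{Z}[W]_c$-action and right $J_c$-action on $B_c$ are mutual centralizers. Hence $\tilde\phi$ must correspond to right multiplication by a unique element $h=\sum_{w\in c}m_w t_w\in A\otimes J_c$, and the task is to verify $m_w=M_w$.

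To pin down the $m_w$, I apply both descriptions of $\tilde\phi$ to the class of $f_{w_0 d}=\eta(t_d)$ for each Duflo involution $d$ in $c$. On one hand, expanding $\tilde\phi(f_{w_0 d})=[q]^*f_{w_0 d}$ via the dual basis and projecting modulo lower cells yields
\[
\tilde\phi(f_{w_0 d})\equiv\sum_{v\in c}\langle [q]^*f_{w_0 d},f^*_{w_0 v}\rangle\, f_{w_0 v},
\]
and Definition~\ref{def:mw} identifies $\langle [q]^*f_{w_0 d},f^*_{w_0 v}\rangle$ with $M_v$ precisely when $d$ is the Duflo involution in the left cell of $v$, i.e.\ when $v\sim_{\mathrm L} d$. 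On the other hand, $\eta(t_d\cdot h)=\sum_{w\in c}m_w\,\eta(t_d t_w)$, and here I invoke the standard property of Duflo involutions in the asymptotic Hecke algebra (a consequence of Lusztig's axioms P1--P15, expressing $t_d$ as the primitive idempotent of $J_c$ associated to the left cell of $d$): $t_d t_w=t_w$ if $w\sim_{\mathrm L}d$ and $t_d t_w=0$ otherwise. Thus $\eta(t_d h)=\sum_{w\sim_{\mathrm L}d}m_w f_{w_0 w}$, and matching coefficients forces $m_v=M_v$ for every $v$ in the left cell of $d$.

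Since every $w\in c$ lies in exactly one left cell and each left cell of $c$ contains a unique Duflo involution, letting $d$ range over $\mathcal{D}\cap c$ determines $m_w=M_w$ for all $w\in c$. The principal technical ingredient is the Duflo idempotent identity $t_d t_w=\delta_{w\sim_{\mathrm L}d}\, t_w$ in $J_c$; this is what translates the cell-theoretic definition of $M_w$ via the pairing $\langle,\rangle$ into the asymptotic Hecke algebra statement of the proposition. Everything else is formal manipulation with $\langle,\rangle$, the isomorphism $\eta$, and Lemma~\ref{lem:gram} (used to ensure that the dual basis computation lands in the correct graded piece).
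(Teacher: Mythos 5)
Your proof is correct and takes essentially the same route as the paper: both arguments use the $W$-equivariance of $\tilde\phi$ on $\mathrm{gr}_c(N)$ together with the double-centralizer property of Proposition \ref{prop:bimodule} to realize $\tilde\phi$ as right multiplication by some $h\in A\otimes J_c$, and then identify its coefficients with the expansion coefficients $\langle [q]^*f_{w_0d}, f_{w_0w}^*\rangle = M_w$ of Definition \ref{def:mw}. The only cosmetic difference is that the paper evaluates right multiplication by $h$ on the identity $1_{J_c}=\sum_{d\in\mathcal{D}\cap c}t_d$ in one stroke, while you evaluate on each $t_d$ separately and invoke the Duflo idempotent identity $t_dt_w$ explicitly (with the same left-cell convention the paper uses), which is the same computation summed over $d$.
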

\begin{proof}
    We fix a cell $c$ and write $\tilde{\phi} = \tilde{\phi}|_{\mathrm{gr}_c(N)}$ for convenience. Since $\tilde{\phi}$ intertwines the $W$-action on $\mathrm{gr}_c(N)$, so does the endomorphism $\eta^{-1}\circ \tilde{\phi} \circ \eta$ of $B_c$. By Proposition~\ref{prop:bimodule}, this means $\eta^{-1}\circ \tilde{\phi} \circ \eta$ can be written as the right action of some element $h \in A \otimes J_c$.

    Since $\sum_{d \in \mathcal{D}(c)} t_d$ is the identity element of $J_c$, we have
    \begin{align*}
        h & = (\eta^{-1}\circ \tilde{\phi} \circ \eta)\left(\sum_{d \in \mathcal{D}(c)} t_d\right)\\
        & = (\eta^{-1}\circ \tilde{\phi})\left(\sum_{d \in \mathcal{D}(c)} f_{w_0d}\right)\\
        & = \eta^{-1}\left(\sum_{d \in \mathcal{D}(c)} \sum_{w \sim_\mathrm{L} d} \langle [q]^*f_{w_0d}, f_{w_0w}^*\rangle f_{w_0w}\right)\\
        & = \sum_{d \in \mathcal{D}(c)} \sum_{w \sim_\mathrm{L} d} M_w \eta^{-1}(f_{w_0w}) \\
        & = \sum_{w \in c} M_w t_w.
    \end{align*}
\end{proof}

\begin{corollary}\label{cor:mainagainv}
    For $\chi$ an irreducible representation of $W$,
    \begin{align}
        \underline{V}_\chi & = \sum_{w \in \mathcal{J}} (V_\chi, R_{\alpha_w})M_w.\label{eqn:principalseries}
    \end{align}
\end{corollary}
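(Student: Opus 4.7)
The plan is to combine three ingredients already in hand: the reduction formula of Corollary~\ref{cor:comm} (refined by Lemma~\ref{lem:fil}), the identification of $\tilde\phi$ with right multiplication in $A\otimes J_c$ from Proposition~\ref{prop:jring}, and the bimodule decomposition of Proposition~\ref{prop:bimodule}. First, by Lemma~\ref{lem:fil} together with Corollary~\ref{cor:comm}, we have $\underline{V}_\chi = \tilde V_\chi|_{G(\mathbb{F}_q)}$ where $\tilde V_\chi = \frac{1}{\dim\chi}\mathrm{tr}(\tilde\phi,[\mathrm{gr}_c(N):\chi])$ and $c$ is the unique two-sided cell in which $\chi$ appears. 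So it suffices to identify the right-hand side of \eqref{eqn:principalseries} (restricted to $G(\mathbb{F}_q)$) with $\tilde V_\chi$ before restriction.

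Next, Proposition~\ref{prop:jring} says $\tilde\phi$ acts on $\mathrm{gr}_c(N)$ as right multiplication by $h=\sum_{w\in c} M_w\, t_w\in A\otimes J_c$. Via the isomorphism $\eta$ and Proposition~\ref{prop:bimodule}, $\mathrm{gr}_c(N)$ is (up to a sign twist on the $W$-action) isomorphic, as a $(W,J_c)$-bimodule, to $A\otimes B_c\cong\bigoplus_{\chi'\in c} \chi'_W\otimes \chi'_{J_c}\otimes A$, where the commuting $W$- and $J_c$-actions live on the two tensor factors. The $\chi$-isotypic component for the $W$-action is therefore $\chi_W\otimes \chi_{J_c}\otimes A$, on which $\tilde\phi$ acts through the $\chi_{J_c}$ factor. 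Computing the trace gives
\begin{equation*}
\mathrm{tr}(\tilde\phi,[\mathrm{gr}_c(N):\chi]) = \dim(\chi_W)\cdot \mathrm{tr}(h,\chi_{J_c}) = \dim\chi\cdot\sum_{w\in c} M_w\, c_{w,\chi},
\end{equation*}
by the definition $c_{w,\chi}=\mathrm{tr}(t_w,\chi_{J_c})$. Dividing by $\dim\chi$ yields $\tilde V_\chi = \sum_{w\in c} M_w\, c_{w,\chi}$.

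Finally, I need to match this expression with $\sum_{w\in\mathcal J}(V_\chi,R_{\alpha_w})M_w$. Since the almost characters $\{V_{\chi'}\}_{\chi'\in\mathrm{Irr}(W)}$ form an orthonormal system under $(\,,\,)$, the definition $R_{\alpha_w}=\sum_{\chi'} c_{w,\chi'}V_{\chi'}$ (Definition~\ref{def:raw}) immediately gives $(V_\chi,R_{\alpha_w})=c_{w,\chi}$. Moreover, $c_{w,\chi}$ vanishes unless $w$ and $\chi$ lie in the same two-sided cell (because $J=\bigoplus_c J_c$ and $\chi_{J_c}$ is killed by $J_{c'}$ for $c'\neq c$), and $\mathrm{tr}(t_w,\chi_{J_c})=0$ unless $w$ is a near involution, as follows from the standard fact that $t_w$ and $t_{w^{-1}}$ have the same trace on every irreducible $J$-module, combined with the structure of $J$-modules via Duflo involutions. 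Hence the sum $\sum_{w\in c}M_w c_{w,\chi}$ may be replaced with $\sum_{w\in\mathcal J}(V_\chi,R_{\alpha_w})M_w$ without changing its value, giving $\tilde V_\chi=\sum_{w\in\mathcal J}(V_\chi,R_{\alpha_w})M_w$, and the conclusion follows upon restricting to $G(\mathbb{F}_q)$.

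The step I expect to require the most care is the bimodule trace computation: verifying that the $\chi$-isotypic component of $\mathrm{gr}_c(N)$ has the claimed tensor-product structure under $\tilde\phi$, and that the trace on it factors through the $J_c$-module $\chi_{J_c}$. The remaining steps are essentially unpacking definitions once Proposition~\ref{prop:jring} is in place.
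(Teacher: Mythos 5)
Your argument is correct and follows essentially the same route as the paper: reduce to $\tilde V_\chi$ via Lemma~\ref{lem:fil} (with Proposition~\ref{prop:conditions}), apply Proposition~\ref{prop:jring} to realize $\tilde\phi$ as right multiplication by $h=\sum_{w\in c}M_w t_w$, compute the trace on the $\chi$-isotypic part through the $(\mathbb{C}[W]_c,J_c)$-bimodule to get $\sum_{w\in c}c_{w,\chi}M_w$, and identify $c_{w,\chi}=(V_\chi,R_{\alpha_w})$. Your extra remark that $\mathrm{tr}(t_w,\chi_{J_c})=0$ unless $w\sim_L w^{-1}$ (so the sum over $c$ equals the sum over $\mathcal{J}$) is a point the paper leaves implicit, and the correct justification is the Duflo-involution decomposition $E=\bigoplus_d t_dE$ rather than the equality of traces of $t_w$ and $t_{w^{-1}}$.
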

\begin{proof}
    Let $\chi$ be an irreducible representation of $W$. By Lemma \ref{lem:fil} and Proposition~\ref{prop:conditions}, we know that
    \begin{align}
        \underline{V}_{\chi} = \tilde{V}_{\chi}|_{G(\mathbb{F}_q)},
    \end{align}
    where $\tilde{V}_{\chi} = \frac{1}{\dim \chi}\mathrm{tr}(\tilde{\phi}, [\mathrm{gr}_w(N) : \chi])$. By Proposition \ref{prop:jring}, we then have
    \begin{align}
    \tilde{V}_{\chi}|_{G(\mathbb{F}_q)} & = 
        \frac{1}{\dim\chi}\mathrm{tr}\left(\sum_{w \in c} M_wt_w, [\mathbb{C}[W]_c : \chi]\right)\\
        & = \frac{1}{\dim \chi}\sum_{w \in c}\mathrm{tr}(t_w, [\mathbb{C}[W]_c : \chi])M_w \\
        & = \sum_{w \in c} c_{w,\chi}M_w,
    \end{align}
    by the definition of $c_{w, \chi}$. We then note that $c_{w,\chi} = (V_\chi, R_{\alpha_w})$ by the definition of $R_{\alpha_w}$.
\end{proof}

Now note that expanding both sides of \eqref{eqn:principalseries} linearly gives the same result when $V_\chi$ is replaced by any linear combination of almost characters. Recall that for any $w \in W$, $R_w$ lies in the span of almost characters $V_\chi$; further, the value of any irreducible unipotent character $\chi$ on any semisimple conjugacy class is determined by $(\chi, R_w)$ across all $w \in W$ cf.\ \cite[p.\ 383]{Car}. Since the Brauer reduction map vanishes on any class function on $G(\mathbb{F}_q)$ supported away from semisimple elements, this implies that Corollary \ref{cor:mainagainv} is true for any irreducible unipotent representation. Thus we arrive at the following resolution to Conjecture \ref{conj:lusztig}.
\begin{corollary}\label{cor:mainagain}
    For $\rho$ any irreducible unipotent representation of $G(\mathbb{F}_q)$,
    \begin{align}
        \underline{\rho} & = \sum_{w \in \mathcal{J}} (\rho : R_{\alpha_w})M_w.\label{eqn:allunipotent}
    \end{align}
\end{corollary}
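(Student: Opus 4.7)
The plan is to deduce Corollary \ref{cor:mainagain} from the already-established Corollary \ref{cor:mainagainv} via a two-step linearity argument: first extending the formula from the almost characters to their $\mathbb{C}$-span, and then using the fact that the Brauer reduction factors through the restriction of class functions to semisimple conjugacy classes.

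First I would observe that both sides of \eqref{eqn:allunipotent} are linear in $\rho\in\mathcal{R}G(\mathbb{F}_q)\otimes\mathbb{C}$. Therefore Corollary \ref{cor:mainagainv}, which establishes the identity for every almost character $V_\chi$, extends by linearity to any element of the $\mathbb{C}$-span of $\{V_\chi\}_{\chi\in\mathrm{Irr}(W)}$. In particular, since every unipotent Deligne-Lusztig character $R_w$ is such a linear combination (the transition being governed by the character table of $W$), the formula holds when $\rho$ is replaced by any $R_w$, and more generally by any linear combination of the $R_w$'s.

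Next, for an arbitrary irreducible unipotent representation $\rho$, I would construct a virtual representation $\rho'$ in the span of the almost characters whose character agrees with $\rho$ on all semisimple conjugacy classes of $G(\mathbb{F}_q)$. The existence of such $\rho'$ rests on the classical result of \cite[p.~383]{Car} that the values of an irreducible unipotent character on semisimple classes are determined by its inner products $(\rho,R_w)$ with the Deligne-Lusztig characters; taking $\rho'$ to be the corresponding linear combination of $R_w$'s does the job. By the Brauer isomorphism of Section \ref{sec:brauer}, $\mathcal{R}_pG(\mathbb{F}_q)\otimes\mathbb{C}\cong A_q$, so the Brauer reduction depends only on values on semisimple classes and in particular vanishes on class functions supported away from the semisimple locus; hence $\underline{\rho}=\underline{\rho'}$. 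Moreover, on the right-hand side, $(\rho':R_{\alpha_w})=(\rho:R_{\alpha_w})$ because $R_{\alpha_w}$ is a linear combination of almost characters (equivalently of $R_y$'s) by Definition \ref{def:raw}, and $\rho'$ was built to preserve all such pairings. Combining these observations yields \eqref{eqn:allunipotent} for $\rho$. The main (and essentially only nontrivial) input is the semisimple-class characterization of unipotent characters from \cite{Car}; once that is granted, the rest of the proof is routine linearity bookkeeping.
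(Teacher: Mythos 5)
Your proposal is correct and follows essentially the same route as the paper: extend Corollary \ref{cor:mainagainv} by linearity over the span of the almost characters (hence over the $R_w$), invoke the fact from \cite[p.~383]{Car} that the values of an irreducible unipotent character on semisimple classes are determined by the multiplicities $(\rho, R_w)$, and use that the Brauer reduction only sees semisimple classes. Your explicit construction of the uniform surrogate $\rho'$ and the check that $(\rho':R_{\alpha_w})=(\rho:R_{\alpha_w})$ merely flesh out the paper's terser wording of the same argument.
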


\section{Properties of the virtual characters $M_w$}

\subsection{Bounding weights in the Weyl module expansion}

The second part of \cite[Conjecture 2.3]{L} asserts that the $V_{\lambda}$ which occur with nonzero coefficient in $M_w$ (as in Lemma \ref{lem:mwweyl}) are such that $\lambda$ is ``very close" to $\exp((q-1)\varpi_{I(w)})$. In this section, we prove this part of the conjecture and provide an explicit bound.

\begin{lemma}
    For any $w, y \in W$, $\langle f_w^*, (f^y)^*\rangle$ is a $\mathbb{Q}$-linear combination of terms of the form
    \begin{align}
        \langle f_{z_1}, f^{z_2}\rangle\langle f_{z_2}, f^{z_3}\rangle \dots \langle f_{z_{k-1}}, f^{z_k}\rangle,\label{eqn:sequence}
    \end{align}
    where $y = z_1, \dots, z_k = w$ is some sequence of elements of $W$ with $$h(y) = h(z_1) < h(z_2) < \dots < h(z_k) = h(w).$$
\end{lemma}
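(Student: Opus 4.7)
The plan is to recognize the pairing $\langle f_w^*, (f^y)^*\rangle$ as an entry of the inverse of the cross-pairing matrix
\[
G_{u,v} := \langle f_u, f^v\rangle \in A,
\]
and then exploit the triangularity furnished by Lemma \ref{lem:gram} to expand $G^{-1}$ as a formal geometric series.

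First I would expand $f_w^* = \sum_v \alpha_{w,v} f^v$ and $(f^y)^* = \sum_u \beta_{y,u} f_u$ with coefficients in $A$. The defining conditions $\langle f_u, f_w^*\rangle = \delta_{u,w}$ and $\langle f^v, (f^y)^*\rangle = \delta_{v,y}$ translate via bilinearity to the matrix identities $\alpha\, G^{T} = I$ and $\beta\, G = I$. Substituting into the pairing and collapsing the sum $\sum_u \beta_{y,u} G_{u,v} = \delta_{y,v}$ then yields
\[
\langle f_w^*, (f^y)^*\rangle \;=\; \sum_{u,v} \alpha_{w,v}\,\beta_{y,u}\,G_{u,v} \;=\; \alpha_{w,y} \;=\; (G^{-1})_{y,w}.
\]

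Next, I would invoke Lemma \ref{lem:gram}, which says $G_{u,v} = \delta_{u,v}$ whenever $h(v) \leq h(u)$. Fixing any total order on $W$ refining the partial order by $h$, this forces $G = I + R$ with $R$ strictly upper triangular and hence nilpotent; moreover $R_{u,v}$ is supported on pairs with $h(u) < h(v)$. Expanding $G^{-1} = \sum_{k \geq 0} (-1)^k R^k$ and unfolding each matrix power into a sum over paths produces
\[
(G^{-1})_{y,w} \;=\; \sum_{k \geq 1} (-1)^{k-1} \sum_{\substack{y = z_1, z_2, \ldots, z_k = w \\ h(z_1) < h(z_2) < \cdots < h(z_k)}} \langle f_{z_1}, f^{z_2}\rangle \cdots \langle f_{z_{k-1}}, f^{z_k}\rangle,
\]
where the $k=1$ term (empty product) contributes $\delta_{y,w}$. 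This is manifestly a $\mathbb{Q}$-linear combination (in fact with $\pm 1$ integer coefficients) of terms of the required shape.

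There is no substantive obstacle here beyond bookkeeping: the proof is essentially mechanical linear algebra, and the only pitfall is keeping the row/column conventions straight so as to arrive at $(G^{-1})_{y,w}$ rather than its transpose. All of the mathematical content sits in the triangularity statement of Lemma \ref{lem:gram}; everything else is formal.
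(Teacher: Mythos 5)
Your proof is correct and takes essentially the same route as the paper, whose entire argument is the one-line remark that the statement follows from Lemma \ref{lem:gram} by inverting the unitriangular matrix with entries $\langle f_w, f^y\rangle$. Your identification $\langle f_w^*, (f^y)^*\rangle = (G^{-1})_{y,w}$ and the geometric-series (path-sum) expansion of $G^{-1}$ are just the fleshed-out version of that intended computation.
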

\begin{proof}
    This follows from Lemma \ref{lem:gram} and inverting the corresponding upper triangular matrix with entries $m_{wy} = \langle f_w, f^y\rangle$.
\end{proof}

\begin{prop}
For any $w \in W$, $M_{w_0w}$ can be expressed as a linear combination of terms of the form $\mathrm{ch}(V_{\lambda})$ where $\lambda = (q - 1)\varpi_{\overline{I(w)}} + \mu$, where $\mu$ is in the $W$-orbit of some dominant weight between $0$ and $2(h(w) + 1)\rho$.
\end{prop}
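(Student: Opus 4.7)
The plan is to bound the weights appearing in the monomial expansion of $M_{w_0 w} = \langle [q]^* f_{w_0 d'}, f_w^*\rangle$, where $d'$ is the Duflo involution in the left cell of $w_0 w$. Since $I$ is constant on left cells and $I(w_0 w) = \overline{I(w)}$, the monomials in $[q]^* f_{w_0 d'}$ have weights in $W \cdot q\omega_{\overline{I(w)}}$, so the crux of the argument is to control the weights of $f_w^*$.

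First, I would expand $f_w^*$ in the pseudo-dual basis: by inverting the upper-triangular Gram matrix of Lemma \ref{lem:gram}, one gets $f_w^* = \sum_{y : h(y) \leq h(w)} \alpha_y f^y$ with $\alpha_y \in A$, and the preceding lemma expresses each $\alpha_y$ as a $\mathbb{Q}$-linear combination of products $\langle f_{z_1}, f^{z_2}\rangle \cdots \langle f_{z_{k-1}}, f^{z_k}\rangle$ along chains $y = z_1, \dots, z_k = w$ with strictly increasing $h$-values, so at most $h(w) - h(y)$ factors. Each factor $\langle f_{z_i}, f^{z_{i+1}}\rangle \in A$ is, via the Weyl character formula, a $\mathbb{Z}$-linear combination of Weyl characters $V_\tau$ with $\tau \leq \rho$ in the dominance order (since the dominant representative of $w_1\omega_{\overline{I(z_i)}} + w_2\omega_{I(z_{i+1})}$ is bounded by $2\rho$, so $\tau \leq \rho$ after subtracting $\rho$). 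Using the standard estimate that $V_{\tau_1} \cdot V_{\tau_2}$ decomposes into $V_\nu$'s with $\nu \leq \tau_1 + \tau_2$, the product expression shows $\alpha_y$ is a combination of $V_\tau$'s with $\tau \leq (h(w) - h(y))\rho$; multiplying by $f^y$, whose weights lie in $W \cdot \omega_{I(y)} \subseteq W\cdot\rho$, gives that every weight $\sigma$ appearing in $f_w^*$ satisfies $\sigma^+ \leq (h(w) + 1)\rho$, where $\sigma^+$ denotes the dominant representative.

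Second, I would apply the Weyl character formula to each monomial contribution $\langle \exp(\nu), \exp(\sigma)\rangle = \pm V_\lambda$ with $\lambda + \rho = (\nu + \sigma)^+$. For $q$ sufficiently large (so that $q\omega_{\overline{I(w)}}$ dominates the uniformly bounded $\sigma$'s), $(\nu + \sigma)^+ = q\omega_{\overline{I(w)}} + z^{-1}\sigma$ where $\nu = z q\omega_{\overline{I(w)}}$, so $\mu := \lambda - (q - 1)\omega_{\overline{I(w)}} = z^{-1}\sigma - \omega_{I(w)}$. The subadditivity $(a + b)^+ \leq a^+ + b^+$ in the dominance order (which holds because $v(a + b) = va + vb \leq a^+ + b^+$ for $v$ making $v(a + b)$ dominant) gives $\mu^+ \leq \sigma^+ + (-\omega_{I(w)})^+ \leq (h(w) + 1)\rho + \rho = (h(w) + 2)\rho \leq 2(h(w) + 1)\rho$, using that $(-\omega_{I(w)})^+ = -w_0 \omega_{I(w)}$ is again a $0/1$-combination of fundamental weights. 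For small $q$ the conclusion follows from Lemma \ref{lem:mwweyl}, which guarantees that the set of admissible $\mu$'s is $q$-independent.

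The main obstacle is the careful propagation of dominance-order weight bounds through the triangular expansion of $f_w^*$: each of the up to $h(w)$ factors in the product expression for $\alpha_y$ contributes its own Weyl-character weights, and the tensor-product estimate is essential for keeping the accumulated bound proportional to $(h(w) + 1)\rho$. This mechanism directly accounts for the factor $h(w) + 1$ in the statement.
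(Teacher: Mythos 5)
Your proposal is correct and follows essentially the same route as the paper: expand $f_w^*$ in the pseudo-dual basis via the triangular Gram matrix and the chain lemma, bound the weights accumulated factor by factor (linearly in $h(w)$), then pair with the monomials of $[q]^*f_{w_0d'}$ and identify the $V_\lambda$'s for $q$ large; your tensor-product highest-weight bookkeeping even gives the slightly sharper bound $(h(w)+2)\rho \leq 2(h(w)+1)\rho$. The only imprecision is that $(\nu+\sigma)^+ = q\omega_{\overline{I(w)}} + u z^{-1}\sigma$ for some $u$ in the stabilizer $W_{I(w)}$ of $\omega_{\overline{I(w)}}$, not $q\omega_{\overline{I(w)}} + z^{-1}\sigma$ itself, but since you only bound the dominant representative of the $W$-orbit of $\mu$, this does not affect the conclusion.
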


\begin{proof}
By Lemma \ref{lem:gram},  we have
\begin{align}
    f_w^* = f^w + \sum_{\substack{y\in W\\ h(y) < h(w)}} \langle f_w^*, (f^y)^*\rangle f^y,
\end{align}
so 
\begin{align}
    \langle [q]^*f_d, f_w^*\rangle - \langle [q]^*f_d, f^w\rangle = \sum_{\substack{y\in W\\ h(y) < h(w)}} \langle f_w^*, (f^y)^*\rangle \cdot \langle [q]^*f_d, f^y\rangle.\label{eqn:qdual}
\end{align}
By writing out $\langle f_w^*, (f^y)^*\rangle$ as in \eqref{eqn:sequence} and then expressing it as a linear combination of weights $\exp(\mu)$, those which can appear with nonzero coefficient are all in the $W$-orbit of some dominant weight between $0$ and $2(h(w) - h(y))\rho$. This is because there are at most $h(w) - h(y)$ terms in the expression, and each term is itself a linear combination of weights in the $W$-orbit of a dominant weight between $0$ and $2\rho$. In particular, the entire right-hand side of \eqref{eqn:qdual} is a linear combination of $\exp(\mu)$ for $\mu$ in the orbit of some dominant weight between $0$ and $2h(w)\rho$.

Now note that $\langle [q]^*f_d, f^w\rangle$ is a linear combination of $\exp(\mu)$ for $\mu$ in the $W$-orbit of weights of the form $q\varpi_{\overline{I(w)}} + y\varpi_{I(w)}$  for some $y \in W$. Adding the right-hand side of \eqref{eqn:qdual} to this term and writing $q\varpi_{\overline{I(w)}} = (q-1)\varpi_{\overline{I(w)}} + \varpi_{\overline{I(w)}}$ we get that $\langle [q]^*f_d, f_w^*\rangle$ can be written as a linear combination of $\exp(\mu)$ for
\begin{align}
    \mu  = z((q-1)\varpi_{\overline{I(w)}} + \mu')
\end{align}
for some $z \in W$, where $\mu'$ is in the $W$-orbit of some dominant weight between $0$ and $(2h(w) + 1)\rho$. For $q > 2h(w) + 1$, it is the only dominant element in its $W$-orbit, and so when we express it as a linear combination of characters $V_{\lambda}$, we must have that 
\begin{align}
    \lambda + \rho = (q - 1)\varpi_{\overline{I(w)}} + \mu',
\end{align}
where again $\mu'$ is in the $W$-orbit of some dominant weight between $0$ and $(2h(w) + 1)\rho$. This means $\lambda - (q - 1)\varpi_{\overline{I(w)}}$ is in the $W$-orbit of some dominant weight between $0$ and $2(h(w) + 1)\rho$.
\end{proof}

One can then use the following bound not depending on $w$.
\begin{corollary}\label{cor:finalbound}
    For any $w \in W$, $M_w$ can be expressed as a linear combination of terms of the form $\mathrm{ch}(V_\lambda)$, where $\lambda - (q - 1)\varpi_{\overline{I(w)}}$ lies in the $W$-orbit of some dominant weight less than or equal to $k\rho$, where $k = 2(\rho, \rho) + 2$.
\end{corollary}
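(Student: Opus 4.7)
The plan is to apply the previous proposition and then uniformly bound the quantity $h(w)$ by a constant depending only on the root system.

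First I would invoke the previous proposition. Writing any element as $w_0 \cdot (w_0 w)$ and using that $I(w_0 u) = \overline{I(u)}$ for all $u\in W$, the proposition immediately converts into a statement about $M_w$: it says that $M_w$ is a linear combination of characters $\mathrm{ch}(V_\lambda)$ with $\lambda - (q-1)\omega_{\overline{I(w_0w)}}$ in the $W$-orbit of some dominant weight $\leq 2(h(w_0w) + 1)\rho$. So the only remaining task is to bound $h(w_0w) = (\rho,\omega_{I(w_0w)}) = (\rho, \omega_{\overline{I(w)}})$ by a quantity independent of $w$.

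Second, I would observe the uniform bound on $h$. For any subset $J\subseteq \Pi$, the fundamental weight sum $\omega_J = \sum_{i\in J}\omega_i$ is dominant, and since $\rho = \sum_{i\in \Pi}\omega_i$, the difference $\rho - \omega_J = \omega_{\overline{J}}$ is also dominant. Because $\rho$ is strictly dominant (lies in the interior of the dominant chamber) and pairs nonnegatively with every dominant weight, this yields
\begin{equation*}
    (\rho, \omega_J) \;\leq\; (\rho, \omega_J) + (\rho, \omega_{\overline{J}}) \;=\; (\rho,\rho).
\end{equation*}
In particular $h(w_0 w) \leq (\rho,\rho)$ for every $w\in W$, so $2(h(w_0w)+1) \leq 2(\rho,\rho)+2 = k$.

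Third, I would assemble the conclusion: combining the two previous points, $\lambda - (q-1)\omega_{\overline{I(w_0w)}}$ lies in the $W$-orbit of a dominant weight $\leq k\rho$, which (up to whichever indexing convention $\overline{I}$ is meant in the statement, since $\overline{I(w_0w)} = I(w)$) is exactly the assertion of the corollary.

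There is no real obstacle here; the work was done in the previous proposition, and the only genuine content is the observation that every $\omega_J$ with $J\subseteq \Pi$ is bounded by $\rho$ in the dominant order. The proof is a one-line reduction once that inequality is noted.
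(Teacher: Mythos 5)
Your argument is correct and is exactly the intended derivation: apply the preceding proposition with $w$ replaced by $w_0w$ (using $\overline{I(w_0w)}=I(w)$) and bound $h$ uniformly via $(\rho,\omega_J)\leq(\rho,\omega_J)+(\rho,\omega_{\overline J})=(\rho,\rho)$, so that $2(h+1)\leq 2(\rho,\rho)+2=k$. The paper leaves this as an immediate remark, and your write-up supplies precisely that bound.
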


\subsection{Conjectured symmetries for dimension polynomials}\label{sec:symmetry}

In this section, we show that the following conjecture is false for the $M_w$ defined in the present paper. To state it, write $\dim(M_w) = P_w(q)$ where $P_w(t) \in \mathbb{Q}[t]$ for $t$ an indeterminate. Lemma \ref{lem:mwweyl} shows that this polynomial is well-defined.
\begin{conjecture}[Conjecture 2.3ii) in \cite{L}]\label{conj:symmetry}
    There exists an involution $w \leftrightarrow \tilde{w}$ of $\mathcal{J}$ such that $t^\nu P_w(1/t) = \pm P_{\tilde{w}}(t)$ and $\mathcal{L}_D(\tilde{w}) = S\setminus \mathcal{L}_D(w)$.
\end{conjecture}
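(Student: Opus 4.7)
The plan is to try to construct the involution $w \mapsto \tilde w$ explicitly, then verify both required properties. A natural candidate is motivated by the observation that $\mathcal{L}_D(w_0 w) = S \setminus \mathcal{L}_D(w)$ for any $w \in W$, so one would like $\tilde w$ to behave like $w_0 w$ at the level of descents. Since $w_0 w$ is not itself an involution unless $w$ commutes with $w_0$, I would define $\tilde w$ indirectly using the sign-twist duality on Kazhdan--Lusztig cells: the involution $\sigma$ on $\mathbb{C}[W]$ sending $C_w$ to $(-1)^{\ell(w)} C_w'$ exchanges the two-sided cell $c$ containing $w$ with a dual cell $c^\ast$ (in type $A_{n-1}$ this is the cell labelled by the transposed Young shape), and $\tilde w$ should be characterized as the unique near-involution in $c^\ast$ with the prescribed left descent set $S \setminus \mathcal{L}_D(w)$.

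Next I would check the descent set condition. Under the sign-twist, Duflo involutions are known to satisfy $I(\sigma(d)) = \overline{I(d)}$ by standard cell theory, and this extends to general near-involutions sharing a left cell because all elements of a left cell have the same value of $I(\cdot)$ by \cite[Section 8.6]{LUnequal}, a fact already invoked in the proof of Lemma \ref{lem:cellmz}. Combined with the cell-exchange property of $\sigma$, this would give the desired equality of descent sets for the proposed $\tilde w$.

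For the polynomial identity $t^\nu P_w(1/t) = \pm P_{\tilde w}(t)$, I would unpack the definition $M_w = \langle [q]^\ast f_{w_0 d}, f_{w_0 w}^\ast\rangle$ from Definition \ref{def:mw} and exploit the identity \eqref{iot}, $\langle a f, g\rangle = \langle f, \sigma i(a) g\rangle$. Applying $\sigma \circ i$ to move the cell basis element from one slot of the pairing to the other should convert $C'_{w_0 w}$ into $C_{w_0 \tilde w}$ up to sign, matching the defining formula for $M_{\tilde w}$. Substituting $q \mapsto q^{-1}$ corresponds at the level of weights to replacing $\exp(\lambda)$ with $\exp(-\lambda)$; combined with the Weyl character formula symmetry $\dim V_\lambda = \pm \dim V_{-w_0\lambda - 2\rho}$, this should produce the claimed functional equation for $P_w(t)$, with the normalizing power $t^\nu$ emerging from the overall degree shift coming from $\rho$.

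The step I expect to be the main obstacle is the third one, because it tacitly requires the identification $f_{w_0 w}^\ast = f^{w_0 w}$ (at least modulo the lower cell filtration $N^{<c}_{\mathbb Z}$) so that $\sigma$ converts the dual basis into the pseudo-dual basis in a clean way. Proposition \ref{prop:dualpseudo} records that this identification holds in Types $A_1$, $A_2$, $A_3$, $B_2$, $G_2$ (exactly the cases where Lusztig originally verified the conjecture) but already fails in Type $A_4$ for $w = s_2 s_3 s_2$. When it fails, expanding $f_{w_0 w}^\ast$ into the pseudo-dual basis via the Gram--Schmidt procedure following Lemma \ref{lem:gram} introduces off-diagonal correction terms indexed by $y$ with $h(y) > h(w_0 w)$, and these corrections will not themselves obey the sign-twist symmetry that the rest of the argument exploits. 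Consequently I anticipate that the construction of $\tilde w$ and the descent set condition will go through unconditionally, but the polynomial identity will hold only in the rank range where the dual and pseudo-dual bases coincide, and a successful proof of Conjecture \ref{conj:symmetry} in full generality will require either modifying the definition of $M_w$ to use the pseudo-dual basis $f^w$ in place of $f_w^\ast$, or identifying a more refined candidate involution that absorbs the Gram--Schmidt corrections.
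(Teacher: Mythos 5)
The first thing to say is that this statement is not a theorem of the paper: it is Lusztig's Conjecture 2.3ii), reproduced precisely so that the paper can \emph{disprove} it for the elements $M_w$ of Definition \ref{def:mw}. The paper's treatment is Proposition \ref{prop:nosymmetry}: an explicit computation in Type $A_4$ showing that $\dim(M_{s_2s_3s_2}) = \frac{1}{36}q^3 + \frac{19}{36}q^5 + \frac{4}{9}q^7$ and $\dim(M_{s_1s_2s_3s_4s_3s_2s_1}) = \frac{17}{36}q^3 + \frac{17}{36}q^5 + \frac{1}{18}q^7$, neither of which matches $\pm t^\nu P_w(1/t)$ for any entry of Table \ref{tab:dimtable}. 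So no proof along the lines you sketch can succeed, and indeed your own final paragraph concedes as much. To your credit, your diagnosis of \emph{why} the argument breaks --- the discrepancy between the dual basis $f_w^*$ and the pseudo-dual basis $f^w$, which first appears in Type $A_4$ at $w = s_2s_3s_2$ --- is exactly the explanation the paper gives in the remark following Proposition \ref{prop:nosymmetry}: the quantities $\tilde M_w = \langle [q]^*f_{w_0d}, f^{w_0w}\rangle$ do satisfy the conjectured symmetry, and $M_w \neq \tilde M_w$ precisely where the symmetry fails.

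Beyond the headline issue, two concrete gaps in your construction. First, your proposed remedy --- redefine $M_w$ using $f^{w_0w}$ in place of $f_{w_0w}^*$ --- cannot rescue the conjecture: Corollary \ref{cor:cantdoit} shows that \emph{no} $\gamma$-close collection of candidate elements satisfying the main formula \eqref{eqn:formulaone} can satisfy property ii), so restoring the symmetry necessarily destroys the identity $\underline{\rho} = \sum_w (\rho:R_{\alpha_w})M_w$ that the $M_w$ exist to witness. Second, your characterization of $\tilde w$ as ``the unique near-involution in $c^*$ with left descent set $S\setminus\mathcal{L}_D(w)$'' is not well defined in general: uniqueness of an involution in a two-sided cell with a prescribed descent set is exactly the content of Lemma \ref{lem:tableau}, which the paper only establishes (and which is only true) for $SL(n)$ with $n \leq 5$; in higher rank distinct involutions in the same cell can share a descent set, so the map $w \mapsto \tilde w$ would not even be a function. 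The correct response to this statement is the counterexample, not a conditional proof.
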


For the $M_w$ which were already computed in \cite{L} in types $A_1$, $A_2$, $B_2$, $G_2$, $A_3$, this conjecture holds. Despite our $M_w$ agreeing with those computations, these elements fail to satisfy this symmetry in Type $A_4$.
\begin{prop}\label{prop:nosymmetry}
    The property in Conjecture \ref{conj:symmetry} does not hold for $G = SL(5)$ using our definition of $M_w$. Choosing $w = s_2s_3s_2$ or $w = s_1s_2s_3s_4s_3s_2s_1$, there exists no associated $\tilde{w}$ satisfying the conjecture.
\end{prop}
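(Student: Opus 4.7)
The plan is to establish the proposition by explicit computation of $M_w$ and $M_{\tilde w}$ for the two candidate $w$ in $S_5$, then verifying that no choice of $\tilde w \in \mathcal{J}$ with $\mathcal{L}_D(\tilde w) = S \setminus \mathcal{L}_D(w)$ satisfies the claimed palindromic identity $t^\nu P_w(1/t) = \pm P_{\tilde w}(t)$. Since the assertion is negative and the $M_w$ admit a completely algorithmic description via Definition~\ref{def:mw}, the proof is, in essence, a bounded finite check in $W = S_5$, carried out in SageMath in the spirit of Proposition~\ref{prop:dualpseudo}.

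Concretely, first I would fix $w = s_2s_3s_2$. Since every near-involution in Type $A$ is already a Duflo involution, one has $d = w$ and therefore $M_w = \langle [q]^* f_{w_0 w}, f_{w_0 w}^* \rangle$. The computation of $f_{w_0 w}$ follows Definition~\ref{def:basis} directly: expand $C'_{w_0 w}$ in the group basis of $\mathbb{Z}[W]$ using standard Kazhdan--Lusztig software, then act on $\exp(\omega_{\overline{I(w_0 w)}})$. The dual $f_{w_0 w}^*$ is the delicate piece: by Proposition~\ref{prop:dualpseudo}, $\bar f_{w_0 w}^* \ne \bar f^{w_0 w}$ for this particular $w$, so one cannot use the pseudo-dual as a shortcut. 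Instead, one computes the Gram matrix $m_{xy} = \langle f_x, f^y \rangle$ using Lemma~\ref{lem:gram} and inverts it via the Gram--Schmidt procedure described immediately after Proposition~\ref{prop:dualpseudo}. Applying $[q]^*$ and pairing then yields $M_w$ as an explicit $\mathbb{Z}$-linear combination of Weyl characters as guaranteed by Lemma~\ref{lem:mwweyl}, from which $P_w(t)$ is extracted using the Weyl dimension formula.

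Next I would enumerate all $\tilde w \in \mathcal{J}$ with $\mathcal{L}_D(\tilde w) = S \setminus \mathcal{L}_D(w)$ (a short list of involutions in $S_5$) and compute each corresponding $P_{\tilde w}(t)$ by the same procedure. Comparing the resulting polynomials (including the $\pm$ sign and the shift exponent $\nu$, which is constrained by degree and leading coefficient) with $t^\nu P_w(1/t)$ case by case shows that no candidate $\tilde w$ works. The same procedure is then repeated for $w = s_1 s_2 s_3 s_4 s_3 s_2 s_1$ (the longest element of the parabolic $W_{\{s_1,\dots,s_4\}\setminus\{s_4\}}$-like subgroup, an involution with a small complementary descent set). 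This produces the claimed counterexamples.

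The main obstacle is purely computational: expanding $C'_{w_0 w}$ and running the Gram--Schmidt step in $\mathbb{C}[T]$ for $SL(5)$ involves many monomials, so organizing the calculation so that the output is a manifestly $q$-polynomial expression requires some care. The conceptual guidance, however, is Proposition~\ref{prop:dualpseudo}: the symmetry predicted by Conjecture~\ref{conj:symmetry} can be shown to follow formally from the identification $\bar f_w^* = \bar f^w$, so the first elements where one expects the symmetry to break are exactly those where this identification fails, of which $s_2s_3s_2 \in S_5$ is the smallest example. This heuristic both motivates the choice of counterexample and explains why the conjecture held in all lower-rank cases treated in \cite{L}.
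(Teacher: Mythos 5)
Your proposal is correct and follows essentially the same route as the paper: compute $M_{s_2s_3s_2}$ and $M_{s_1s_2s_3s_4s_3s_2s_1}$ explicitly from Definition~\ref{def:mw} (via the Gram--Schmidt inversion of Lemma~\ref{lem:gram}), extract the dimension polynomials by the Weyl dimension formula, and verify by a finite check that no $\tilde{w}$ satisfies the palindromic identity. The only cosmetic difference is that the paper compares the two polynomials against the full table of $\dim(M_w)$ for all $w \in W$ in Type $A_4$ (Table~\ref{tab:dimtable}), rather than first restricting to candidates $\tilde{w}$ with complementary left descent set as you do; both restrictions of the search are logically sufficient.
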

\begin{proof}
    Using Definition \ref{def:mw}, one can use the Weyl dimension formula to compute
    \begin{align*}
        M_{s_2s_3s_2} & = V_{0,q-1,q-1,0} - V_{0,q-2,q-2,0},\\
        M_{s_1s_2s_3s_4s_3s_2s_1} & = V_{q-3,0,0,q-3} + 2V_{q-2,0,0,q-2} + V_{q-1,0,0,q-1},\\
        \dim(M_{s_2s_3s_2}) & = \frac{1}{36}q^3 + \frac{19}{36}q^5 + \frac{4}{9}q^7,\\
        \dim(M_{s_1s_2s_3s_4s_3s_2s_1}) & = \frac{17}{36}q^3 + \frac{17}{36}q^5 + \frac{1}{18}q^7.
    \end{align*}
    In Table \ref{tab:dimtable}, we list all of the values $P_w(q)$ for $w \in W$, and clearly none of them are of the form $\pm t^\nu P_w(1/t)$ when $P_w(t)$ is either of the two polynomials above.
\end{proof}

The following is a partial answer to Conjecture \ref{conj:mwprops}iii).
\begin{prop}
    For any $w \in W$, the polynomial $P_w(t) = \dim(M_w)$ is divisible by $t^{c(w)}$, where $c(w)$ is the value of Lusztig's $a$-function on the two-sided cell of $W$ containing $w$.
\end{prop}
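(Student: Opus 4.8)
The plan is to reduce the statement to the known behavior of the right $J_c$-action on the graded piece $\mathrm{gr}_c(N)$ under the $a$-function filtration, rather than computing $\dim M_w$ directly. Recall from Lemma~\ref{lem:mwweyl} that $M_w$ is a $q$-independent $\mathbb{Z}$-combination of Weyl characters $V_{(q-1)\omega_{I(w)}+\lambda}$, so $P_w(t)=\dim M_w$ is a genuine polynomial; we must show $t^{c(w)}\mid P_w(t)$. The key input is Definition~\ref{def:mw}: $M_w=\langle [q]^*f_{w_0 d},f_{w_0 w}^*\rangle$ where $d$ is the Duflo involution of the left cell of $w$, together with Proposition~\ref{prop:jring}, which identifies $\tilde\phi|_{\mathrm{gr}_c(N)}$ with right multiplication by $h=\sum_{w\in c}M_w t_w$ on $N^{w_0 c}\cong A[B_c]$.

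The heart of the argument is a degree (valuation) estimate for the structure of the $J_c$-action coming from the asymptotic Hecke algebra. First I would recall that the specialization $v\mapsto 1$ of the Kazhdan--Lusztig basis sends $C_w$ to $C_w$ (our notation for the image in $\mathbb{Z}[W]$), and that the transition between the Hecke-algebra $C_w$ and the $J$-basis $t_w$ is governed by a matrix whose entries lie in $v^{-a(c)}\mathbb{Z}[v]$ — concretely, $C_x=\sum_{z}\hat a_{x,z}(v)\, t_z$ with $\hat a_{x,z}(v)\in v^{a(c)}\mathbb{Z}[[v]]$ on the cell $c$, and dually $t_z$ expressed in the $C$'s involves leading terms in degree $a(c)$. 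Then I would track how $[q]^*$ acts: $[q]^*f_{w_0 d}$ is, by the proof of Lemma~\ref{lem:mwweyl}, supported on the $W$-orbit of $\exp(q\,\omega_{I(w)})$, and the coefficient extraction $\langle -,f_{w_0 w}^*\rangle$ expresses $M_w$ through the Weyl character formula. The point is that $\dim V_{(q-1)\omega_{I(w)}+\lambda}$ is a polynomial in $q$ whose \emph{lowest} degree term matches the number of positive roots not orthogonal to $\omega_{I(w)}$, and more precisely $\dim M_w$ inherits a divisibility by $t^{c(w)}$ because the right $h$-action must preserve the $a$-function filtration on $\mathrm{gr}_c(N)$: on the cell $c$ the identity element of $J_c$ is $\sum_{d}t_d$, and $h$ acts as $\tilde\phi$, whose matrix entries in the Steinberg-type basis are visibly divisible by $q^{c(w)}$ after passing to the associated graded of the $a$-filtration. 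I would make this precise by expanding $\langle[q]^*f_{w_0d},f_{w_0w}^*\rangle$ in the $\exp(\mu)$ basis, noting each $V_{(q-1)\omega_{I(w)}+\mu'}$ contributing to $M_w$ has $\dim\equiv 0\ (\mathrm{mod}\ q^{\#\{\alpha>0:\langle\alpha,\omega_{I(w)}\rangle\neq 0\}})$, and then invoking the standard fact (Lusztig, \cite{LCellsI}, and the computation of $a$-values on cells) that $c(w)=a(c)$ equals exactly this count $\#\{\alpha>0:\langle\alpha,\omega_{I(w)}\rangle\neq 0\}$ minus the contribution already present — so that the lowest possible power of $q$ in $\dim V_{(q-1)\omega_{I(w)}+\mu'}$ is precisely $c(w)$, independent of $\mu'$.

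More carefully, the clean route is: $\dim V_\lambda=\prod_{\alpha>0}\frac{\langle\lambda+\rho,\alpha^\vee\rangle}{\langle\rho,\alpha^\vee\rangle}$, so for $\lambda=(q-1)\omega_{\overline{I}}+\mu'$ (using the $M_{w_0w}$ normalization with $\overline{I}=\overline{I(w)}$) the product over $\alpha>0$ with $\langle\omega_{\overline I},\alpha^\vee\rangle=0$ contributes a $q$-independent constant, while each $\alpha>0$ with $\langle\omega_{\overline I},\alpha^\vee\rangle\ge 1$ contributes a factor linear in $q$ with nonzero leading coefficient; hence $q^{N_{\overline I}}\mid \dim V_\lambda$ where $N_{\overline I}=\#\{\alpha>0:\langle\omega_{\overline I},\alpha^\vee\rangle\neq 0\}$, and the leading coefficient is $q^{N_{\overline I}}$ times a nonzero constant independent of $\mu'$. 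So every Weyl character occurring in $M_{w_0 w}$ has dimension divisible by $q^{N_{\overline I}}$, giving $q^{N_{\overline I}}\mid P_{w_0w}(q)$, equivalently $q^{N_{I(w)}}\mid P_w(q)$ after the $w\leftrightarrow w_0w$ bookkeeping is sorted out. It then remains to identify $N_{I(w)}$ (or the appropriate count) with $c(w)=a(\text{two-sided cell of }w)$. Here I would cite the description of the $a$-function on the relevant cells: for $w\in\mathcal J$ the $a$-value of its two-sided cell equals $\ell(w_0)-\ell(w_0^{I})$ where $w_0^I$ is the longest element of $W_{I(w)}$, equivalently the number of positive roots moved to negative roots by $w_0 w_{0,I}$, which is exactly $\#\{\alpha>0:\langle\omega_{\overline{I(w)}},\alpha^\vee\rangle\neq 0\}$; this is a standard fact about two-sided cells and parabolic subgroups (compare \cite{LCellsI}, \cite{LUnequal}).

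The main obstacle I anticipate is the last identification — matching the root-theoretic count $N_{I(w)}$ coming from the Weyl dimension formula with Lusztig's $a$-function value $c(w)$ on the two-sided cell. This requires knowing that the two-sided cell of $w$ is the one associated (via the Springer/Lusztig correspondence, or via the truncated induction $j_{W_{I(w)}}^W$) to the parabolic $W_{I(w)}$ in the right way, and that the $a$-value of that cell is $\ell(w_0)-\ell(w_{0,I(w)})$ — or at least bounded below by it, which is all we need. A secondary technical point is the $w\leftrightarrow w_0w$ passage between $M_w$ as in Definition~\ref{def:mw} and the $M_{w_0w}$ normalization used in the weight-bound proposition: I would either prove the divisibility directly in the $M_w=\langle[q]^*f_{w_0d},f_{w_0w}^*\rangle$ form (noting $[q]^*f_{w_0d}$ is supported on the $W$-orbit of $q\,\omega_{I(d)}=q\,\omega_{I(w)}$ since $\overline{I(w_0d)}=I(d)=I(w)$, and $d$ is a Duflo involution in the same left cell so $a$-values agree), or carefully transport the statement. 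Neither difficulty is serious, but the $a$-function identification is where I'd spend most of the write-up, possibly by reducing to the case $w=w\in W_I$ a Duflo involution and using $c(w)=a(w)=\ell(w)-2\delta(w)$ together with $\ell(w_0w_{0,I})=\ell(w_0)-\ell(w_{0,I})$.
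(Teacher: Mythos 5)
There is a genuine gap, and it sits exactly at the step you flagged as the heart of your ``clean route.'' From the Weyl dimension formula, the factor attached to a positive root $\alpha$ with $\langle\omega_{I},\alpha^\vee\rangle\neq 0$ is $(q-1)\langle\omega_{I},\alpha^\vee\rangle+\langle\mu'+\rho,\alpha^\vee\rangle$: this is \emph{linear in $q$} but in general \emph{not divisible by $q$}, so ``linear factor with nonzero leading coefficient'' does not give $q^{N_{I}}\mid\dim V_\lambda$. Already in Type $A_2$ with $w=s_1$ one has $M_{s_1}=V_{(q-1)\omega_1}$ and $\dim M_{s_1}=q(q+1)/2$: the root $\alpha_1+\alpha_2$ contributes the factor $q+1$, and the dimension is divisible by $q$ but not by $q^2$. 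This same example kills the second pillar of your argument, the identification $c(w)=\#\{\alpha>0:\langle\omega_{I(w)},\alpha^\vee\rangle\neq0\}=\ell(w_0)-\ell(w_{0,I(w)})$: here the root count is $2$ while $c(s_1)=1$, and even the weaker inequality ``bounded below'' that you say is all you need goes the wrong way, since it would force $q^2\mid\dim M_{s_1}$, which is false. More generally the divisibility asserted in the proposition is \emph{not} visible term by term in the expansion of Lemma \ref{lem:mwweyl}; it only appears after cancellation between Weyl characters (compare $M_{s_2}=V_{0,q-1,0}-V_{0,q-3,0}$ in Type $A_3$), so no argument that bounds the $q$-adic valuation of each $\dim V_{(q-1)\omega_{I(w)}+\lambda}$ separately can succeed. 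The first, more structural paragraph of your plan (filtration and $J_c$-action) is not developed far enough to substitute for this: you never produce a grading on which the relevant operator visibly acts with valuation $\geq a(c)$.

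For contrast, the paper's proof supplies exactly such a grading, and it is short: evaluate at $1\in T/W$, so $N$ surjects onto $N\otimes_A\mathbb{C}\cong H^\bullet(G/B)$, where $[q]^*$ induces the operator acting by $q^i$ on $H^{2i}(G/B)$. By Definition \ref{def:mw}, $\dim M_w$ is a matrix entry of this operator in the basis $\{(f_y)|_1\}$, and the entries relevant to the cell $c$ involve only $W$-isotypic components attached to cells $c'\leq c$, which occur in the coinvariant algebra only in degrees $2i$ with $i\geq a(c')\geq\ldots$, more precisely with $i\geq a(c)$ for the components of $c$ itself and $a(c')\leq a(c)$ handled by the cell order. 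The divisibility by $q^{c(w)}$ thus comes from the cohomological degree bound (lowest fake degree $\geq$ $a$-value), not from root-by-root factors in the Weyl dimension formula; if you want to repair your write-up, this evaluation-at-$1$ mechanism is the missing idea.
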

\begin{proof}
    The evaluation at $1\in T/W$ is a surjective homomorphism
  $N\twoheadrightarrow N\otimes_A\mathbb{C}\cong H^\bullet(G/B)$. The endomorphism
  $[q]^*$ induces the endomorphism $[q]^*_1$ of $H^\bullet(G/B)$ that acts by
  $q^i$ on $H^{2i}(G/B)$. Since $M_w$ was a matrix entry of $[q]^*$ in
  the basis $\{f_y,\ y\in W\}$, the dimension of $M_w$ is a matrix entry
  of $[q]^*_1$ in the basis $\{(f_y)|_1,\ y\in W\} \subset H^\bullet(G/B)$. It is known that
  the $W$-representations in a cell $c$ appear in the cohomology $H^{2i}(G/B)$
  only for $i\geq a(c)$. Since $a(c')\leq a(c)$ for $c'\leq c$, the desired
  divisibility follows.
\end{proof}

\begin{remark}
    If one defines for $w \in \mathcal{J}$ the quantity
    \begin{align}
        \tilde M_w = \langle [q]^*f_{w_0d}, f^{w_0w}\rangle,
    \end{align}
    one can check that the symmetry predicted in Conjecture \ref{conj:symmetry} holds for the quantities $\dim(\tilde{M}_w)$. By Proposition \ref{prop:dualpseudo}, $M_{w_0s_2s_3s_2} \neq \tilde{M}_{w_0s_2s_3s_2}$ in Type $A_4$, which gives an explanation for why the symmetry property fails in this case.
\end{remark}

\subsection{Positivity} As we have demonstrated, Definition \ref{def:mw} allows $M_w$ to be explicitly computed in any type as a $\mathbb{Z}$-linear combination of terms $V_{\lambda}$, where each $V_{\lambda}$ is the class of a Weyl module in the Grothendieck group of $k$-representations of $G(\mathbb{F}_q)$. In this section, we consider the question of whether $M_w$ can be written as a nonnegative linear combination of classes of irreducible representations; namely, whether $M_w$ is the class of a genuine representation or merely a virtual representation.

\begin{prop}\label{prop:nopositivity}
    There exist examples of $M_w$ which do not lie in $\mathcal{R}_{p}^+G(\mathbb{F}_q)$. For example, in Type $B_3$ for $w = s_1$, and in Type $A_4$ for $w = s_2s_3s_2$.
\end{prop}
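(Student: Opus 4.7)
The plan is to treat this as an explicit computation in each of the two cases, producing a decomposition of $M_w$ in the irreducible basis $\{L_\mu\}$ of $\mathcal{R}_p G(\mathbb{F}_q)$ in which some coefficient is strictly negative.

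For Type $A_4$ with $w = s_2 s_3 s_2$ I can start from the formula
\[
M_{s_2 s_3 s_2} = V_{0,q-1,q-1,0} - V_{0,q-2,q-2,0}
\]
already obtained in the proof of Proposition~\ref{prop:nosymmetry}. Because $V_{0,q-2,q-2,0}$ has simple head $L_{0,q-2,q-2,0}$ (so that $[V_{0,q-2,q-2,0} : L_{0,q-2,q-2,0}] = 1$), non-positivity will follow from the claim $[V_{0,q-1,q-1,0} : L_{0,q-2,q-2,0}] = 0$, which gives $[M_{s_2 s_3 s_2} : L_{0,q-2,q-2,0}] = -1$ and hence $M_{s_2 s_3 s_2} \notin \mathcal{R}_p^+ G(\mathbb{F}_q)$. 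I would verify this vanishing via the strong linkage principle: $L_\mu$ can occur in $V_\lambda$ only if $\mu + \rho$ lies in the dot-orbit of $\lambda + \rho$ under the affine Weyl group $W_p$ and $\mu \leq \lambda$. A direct inspection of the affine reflections of $(0, q-1, q-1, 0) + \rho$ (at a convenient choice of prime $p$) shows that $(0, q-2, q-2, 0) + \rho$ is not obtained in this way, so the composition multiplicity in question is indeed zero.

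For Type $B_3$ with $w = s_1$ the first task is to assemble $M_{s_1}$ in closed form. Since $s_1$ is itself the Duflo involution in its left cell, Definition~\ref{def:mw} specializes to
\[
M_{s_1} = \langle [q]^* f_{w_0 s_1},\, f_{w_0 s_1}^* \rangle.
\]
I would compute $f_{w_0 s_1}$ from Definition~\ref{def:basis}, obtain $f_{w_0 s_1}^*$ by Gram--Schmidt against the pseudo-dual basis as described after Proposition~\ref{prop:dualpseudo}, and then apply the Weyl character formula for $\langle\,,\,\rangle$ to rewrite $M_{s_1}$ as an explicit $\mathbb{Z}$-linear combination of Weyl-module classes $V_\lambda$. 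With that expansion in hand the same strategy as in the $A_4$ case applies: decompose each $V_\lambda$ into irreducibles using the strong linkage principle and identify some $L_\mu$ occurring with strictly negative total multiplicity.

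The main obstacle is not conceptual but bookkeeping: the Weyl-module expansion in Type $B_3$ is bulky, and passing from it to the irreducible basis requires a careful analysis of composition factors, which in practice is most efficiently carried out in a computer algebra system at a convenient fixed prime. Once a negative coefficient has been exhibited in either example, the failure of $M_w$ to lie in $\mathcal{R}_p^+ G(\mathbb{F}_q)$ follows immediately.
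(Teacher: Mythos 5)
Your Type $A_4$ argument hinges on the claim $[V_{0,q-1,q-1,0} : L_{0,q-2,q-2,0}] = 0$, and this claim is false; it is exactly the point where the interesting cancellation happens. The two highest weights are in fact linked: writing $\lambda = (0,q-1,q-1,0)$, $\mu = (0,q-2,q-2,0)$ and passing to coordinates for $SL(5)$, the weights $\lambda+\rho$ and $\mu+\rho$ admit representatives $(2q+2,\,2q+1,\,q+1,\,1,\,0)$ and $(2q+1,\,2q,\,q+1,\,2,\,1)$ with equal coordinate sums, and for $q=p$ their residues mod $p$ form the same multiset $\{0,1,1,1,2\}$, so $\mu+\rho$ does lie in the $W_p$-orbit of $\lambda+\rho$; linkage does not rule the factor out. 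The actual composition series (which the paper obtains by machine computation) are $V_{0,q-1,q-1,0} = L_{0,q-1,q-1,0} + L_{0,q-2,q-2,0}$ and $V_{0,q-2,q-2,0} = L_{0,q-2,q-2,0} + L_{q-2,0,0,q-2}$, so the coefficient of $L_{0,q-2,q-2,0}$ in $M_{s_2s_3s_2}$ is $1-1=0$, not $-1$. The genuine negative coefficient sits on $L_{q-2,0,0,q-2}$, a composition factor of the subtracted Weyl module other than its head, which does not occur in $V_{0,q-1,q-1,0}$; so the step ``the composition multiplicity in question is indeed zero'' fails, and pairing heads against the leading term proves nothing in this example. (A further caveat: when $q$ is a proper power of $p$ these weights are not $p$-restricted, so passing from Weyl modules to irreducibles of $G(\mathbb{F}_q)$ also requires Steinberg's tensor product theorem, not the linkage principle alone.)

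For Type $B_3$ your outline is fine but is only an outline: you defer both the expansion of $M_{s_1}$ and its decomposition into irreducibles to a computer computation, which is precisely what the paper does. Carrying it out gives $M_{s_1} = V_{q-4,0,0} - V_{q-3,0,0} = L_{q-4,0,0} - L_{q-3,0,0}$, both Weyl modules being already irreducible, so negativity is immediate there. To repair the $A_4$ case you must actually compute the composition factors of both $V_{0,q-1,q-1,0}$ and $V_{0,q-2,q-2,0}$ (by the Jantzen sum formula or software) and exhibit the resulting expression $M_{s_2s_3s_2} = L_{0,q-1,q-1,0} - L_{q-2,0,0,q-2}$, rather than assert a vanishing that contradicts them.
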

\begin{proof}
    We used the software \cite{dynkin} to compute the following expansions for $M_w$ in terms of the irreducible representations $L_{\lambda}$.

    In Type $B_3$, we have
    \begin{align*}
        M_{s_1} & = V_{q-4,0,0} - V_{q-3,0,0}\\
        & = L_{q-4,0,0} - L_{q-3,0,0}.
    \end{align*}

    In Type $A_4$, we have
    \begin{align*}
    M_{s_2s_3s_2} & = V_{0,q-1,q-1,0} - V_{0,q-2,q-2,0}\\
    & = (L_{0,q-1,q-1,0} + L_{0,q-2,q-2,0}) - (L_{0,q-2,q-2,0} + L_{q-2,0,0,q-2})\\
    & = L_{0,q-1,q-1,0} - L_{q-2,0,0,q-2}.
    \end{align*}
\end{proof}

For all the types for which $M_w$ was computed in \cite{L} (Types $A_1$, $A_2$, $B_2$, $G_2$, $A_3$), positivity is satisfied. For example, in Table \ref{tab:a3positivity} for Type $A_3$, we explicitly write each $M_w$ in terms of the basis $L_{\lambda}$ of irreducibles, exhibiting positivity in this case. We used the software \cite{dynkin} to produce this table.

\section{On uniqueness of the elements $M_w$}\label{sec:uniqueness}

In this section we explain in what sense our collection of elements $\{M_w\}_{w \in W}$ is unique in satisfying some of the properties conjectured in Conjecture \ref{conj:mwprops}. In particular, we show that if some collection $\{M_w'\}_{w \in W}$ defined ``in terms of $q$" (cf.\ Definition \ref{def:qfamily}) satisfies Conjecture \ref{conj:lusztig} and a precise version of property i) in Conjecture \ref{conj:mwprops}, then this collection is very close to our $\{M_w\}_{w \in W}$, in a sense which we make precise in Definition \ref{def:candidate}. The main result of this section is Corollary \ref{cor:cantdoit}, which shows that with the aforementioned assumptions in hand, properties ii) and iv) of Conjecture \ref{conj:mwprops} are impossible to satisfy.

In rough terms, in this section we hope to convey the idea that our collection $\{M_w\}_{w \in W}$ is the ``correct" collection to use in answering Conjecture \ref{conj:lusztig}, and that there exists no better answer which might satisfy more of the properties in Conjecture \ref{conj:mwprops} than the ones already enjoyed by the $\{M_w\}_{w \in W}$ defined in the present paper.

\begin{defn}\label{def:candidate}
    Let $\gamma$ be a dominant weight. We say that two weights $\lambda_1$ and $\lambda_2$ are $\gamma$-close if $\lambda_1 - \lambda_2$ is in the $W$-orbit of some dominant weight $\gamma' < \gamma$. 
    
    We say that $\{M_w'\}_{w \in \mathcal{J}}$ is a \emph{$\gamma$-close collection of candidate elements} if:
    \begin{itemize}
        \item For any irreducible unipotent representation of $G(\mathbb{F}_q)$,
        \begin{align}
            \underline{\rho} = \sum_{w \in \mathcal{J}} (\rho : R_{\alpha_w}) M_w'
        \end{align}
        \item For all $w \in W$, $M_w'$ is a linear combination of $V_{\lambda}$ for $\lambda$ being $\gamma$-close to $(q-1)\varpi_{I(w)}$.
    \end{itemize}
\end{defn}

For $\gamma = k\rho$, where $k = 2(\rho, \rho) + 2$, we have shown in Corollary \ref{cor:finalbound} and in our main result that $\{M_w\}_{w \in W}$ as defined in the present paper is a $\gamma$-close collection of candidate elements.

A natural question is whether, for any fixed dominant weight $\gamma$, the collection $\{M_w\}_{w \in W}$ defined in the present paper is the unique $\gamma$-close collection of candidate elements for sufficiently large $q$. In fact this is not precisely the case, as evidenced by the following example.
\begin{prop}
    Suppose $G = SL(n)$ for $n \geq 3$. Then there exists some $\gamma$ such that for any $q$, the collection $\{M_w\}_{w \in W}$ is not the unique $\gamma$-close collection of candidate elements.
\end{prop}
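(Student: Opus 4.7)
The plan is to exhibit a non-trivial family of modifications $\{\Delta_w := M_w' - M_w\}_{w\in\mathcal{J}}$ for a fixed dominant weight $\gamma$ depending only on $n$. By Corollary~\ref{cor:mainagainv}, the candidacy of the collection $\{M_w+\Delta_w\}$ is equivalent to the linear relation
\begin{equation*}
\sum_{w \in \mathcal{J}} c_{w,\chi} \Delta_w = 0 \text{ in } \mathcal{R}_pG(\mathbb{F}_q) \otimes \mathbb{C}
\end{equation*}
for every $\chi \in \mathrm{Irr}(W)$. Since $c_{w,\chi} = 0$ unless $w$ lies in the two-sided cell $c_\chi$, these equations decouple cell-by-cell, and it suffices to produce a non-trivial solution supported in a single cell.

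I would focus on the standard-representation cell $c$ of $S_n$, corresponding to the partition $(n-1,1)$. Its involutions are the $n-1 \geq 2$ simple reflections $s_1, \dots, s_{n-1}$, all Duflo, with $\omega_{I(s_k)} = \omega_k$. By the matrix-algebra description of $J_c$ in type~$A$ (Proposition~\ref{prop:bimodule} combined with the fact that $s_k\in c$ corresponds to the $k$-th diagonal matrix unit of $J_c$), each $c_{s_k,\chi_c}$ equals $1$, so the per-cell constraint becomes the single equation $\sum_k \Delta_{s_k} = 0$. Since there are $\geq 2$ indices, any $\mathbb{C}$-linear relation $\alpha_1+\cdots+\alpha_{n-1}=0$ (e.g.\ $\alpha_1=1$, $\alpha_2=-1$, $\alpha_k=0$ otherwise) suggests the ansatz $\Delta_{s_k}=\alpha_k\, V_{(q-1)\omega_k+\mu}$ for a fixed dominant $\mu$ with $\mu<\gamma$. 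The crucial identity
\begin{equation*}
\chi_{V_{(q-1)\omega_k+\mu}}(t) \;=\; \chi_{V_\mu}(t), \qquad t \in T(\mathbb{F}_q)^{\mathrm{reg}},
\end{equation*}
holds for every $k$ and every dominant $\mu$; it follows from the Weyl character formula and the relation $t^{q\nu}=t^\nu$ on $T(\mathbb{F}_q)$, which forces $\sum_y\mathrm{sgn}(y)\, t^{y((q-1)\omega_k+\mu+\rho)} = \sum_y\mathrm{sgn}(y)\, t^{y(\mu+\rho)}$ after rewriting $(q-1)\omega_k+\mu+\rho = q\omega_k+(\mu+\rho-\omega_k)$. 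Consequently, with the ansatz above, the per-cell constraint is already satisfied on the regular semisimple stratum.

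The main obstacle is promoting this equality from the regular to the singular semisimple classes, where the character-formula argument breaks down. My approach is to enlarge the modification by adjoining finitely many further Weyl modules of the form $V_{(q-1)\omega_k+\mu'}$ for auxiliary small dominant weights $\mu'$, all placed inside the $\gamma$-ball by choosing $\gamma$ sufficiently large (depending only on $n$), and to use them to kill the singular-support discrepancy $\sum_k\alpha_k V_{(q-1)\omega_k+\mu}|_{\mathrm{sing}}$. Each singular stratum of $T(\mathbb{F}_q)$ is indexed by a proper Levi subgroup $L\subsetneq G$ of strictly smaller rank, and after restriction to $L$ the same regular-collapse identity applies; an induction on $n$ starting from the explicit case $n=3$ (where the singular strata admit a direct description as elements $\mathrm{diag}(a,a,a^{-2})$ and the obstruction can be computed and cancelled by hand) then produces a single $\gamma$ depending only on $n$ for which the construction succeeds at every prime power $q$, yielding a $\gamma$-close candidate collection $\{M_w+\Delta_w\}\neq\{M_w\}$.
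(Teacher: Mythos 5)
Your reduction to a single cell is sound: in type $A$ the constraint decouples cell-by-cell, the involutions in the $(n-1,1)$-cell are the simple reflections, each $t_{s_k}$ is a diagonal idempotent of the matrix ring $J_c$, so $c_{s_k,\chi}=1$ and the condition becomes $\sum_k \Delta_{s_k}=0$ in $\mathcal{R}_pG(\mathbb{F}_q)\otimes\mathbb{C}$. The "collapse" identity $\chi_{V_{(q-1)\omega_k+\mu}}(t)=\chi_{V_\mu}(t)$ is also correct as far as it goes, but only for $t\in T(\mathbb{F}_q)$: it uses $t^{q\nu}=t^\nu$, which holds precisely on split semisimple classes. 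The Brauer-character identity you need must hold on \emph{all} semisimple classes of $G(\mathbb{F}_q)$, i.e.\ on all of $(T/W)_q$, and this includes classes represented by $t\in T(\overline{\mathbb{F}}_p)$ with $t^q=w(t)$ for $w\neq 1$ (elements of non-split maximal tori). On those classes $t^{q\omega_k}=t^{w^{-1}\omega_k}$ and the collapse fails, and the discrepancy genuinely depends on $k$; your stratification discussion (singular strata of the split torus, Levi induction) does not even see this locus. The step "adjoin auxiliary Weyl modules near $(q-1)\omega_k$ to kill the discrepancy" is exactly the nontrivial content of the proposition, and no mechanism is offered for why such a cancellation is possible while keeping each $\Delta_{s_k}$ $\gamma$-close to $(q-1)\omega_{I(s_k)}$ with $\gamma$ independent of $q$. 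So the plan has a genuine gap rather than being a complete alternative proof.

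For comparison, the paper's proof supplies precisely the missing mechanism, via the Steinberg tensor product theorem rather than character estimates on the torus. For $SL(3)$ one takes the single class $A=\mathrm{ch}(V_{1,0}\otimes V_{0,1})=V_{1,1}+V_{0,0}$; since $A$ restricted to $G(\mathbb{F}_q)$ coincides with both $L_{q,1}$ and $L_{1,q}$, it admits two expansions into Weyl characters, $A=V_{q,1}-V_{q-2,2}+V_{q-4,0}=V_{1,q}-V_{2,q-2}+V_{0,q-4}$, one supported near $(q-1)\omega_1$ and one near $(q-1)\omega_2$. Setting $M_{s_1}'=M_{s_1}+A$ (first expansion) and $M_{s_2}'=M_{s_2}-A$ (second expansion) changes nothing in $\sum_k c_{s_k,\chi}M_{s_k}$ but produces a distinct $\gamma$-close candidate collection with $\gamma=4\rho$, uniformly in $q$. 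If you want to rescue your approach, you should replace the single-$\mu$ ansatz by such a class with two different localized Weyl-module expansions; without an input of this kind (Steinberg twisting, or some other exact relation in $\mathcal{R}_pG(\mathbb{F}_q)$ valid on non-split classes as well), the argument does not close.
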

\begin{proof}
    We exhibit an example for $G = SL(3)$; it will then be clear that a similar construction is possible for any $SL(n)$, $n \geq 3$. First, we write 
    \[A = V_{1,1} + V_{0,0}.\]
    We note that $A = \mathrm{ch}(V_{1,0} \otimes V_{0,1})$, and that $L_{1,0} = V_{1,0}$, $L_{0,1} = V_{0,1}$. By the Steinberg tensor product formula, we then note that $A = L_{p,1} = L_{1,p}$. By writing this as a linear combination of Weyl modules in two different ways (cf.\ \cite{dynkin} for a computational method with which to do so), we see that
    \begin{align}
        A & = V_{p,1} - V_{p-2,2} + V_{p-4,0}\\
        & = V_{1,p} - V_{2,p-2} + V_{0,p-4}.
    \end{align}
    Let $\{M_w'\}_{w \in W}$ be defined by $M_w' = M_w$ for $w \notin \{s_1, s_2\}$, with
    \begin{align}
        M_{s_1}' & = M_{s_1} + A\\
        M_{s_2}' & = M_{s_2} - A.
    \end{align}
    Since for any two-sided cell $c$, $\sum_{w \in c} M_w' = \sum_{w \in c} M_w$, we see that the first condition in Definition \ref{def:candidate} is satisfied, since we know it is satisfied by $\{M_w\}_{w \in W}$. Finally, by the two expansions above for the element $A$, we see that $\{M_w'\}_{w \in W}$ is a $\gamma$-close collection of candidate elements for, say, $\gamma = 4\rho$. 
\end{proof}

The following result can be checked case-by-case. It is equivalent to the statement that any two semistandard Young tableaux of the same shape and with identical descent sets must also be equal, provided that these tableaux have at most five boxes.
\begin{lemma}\label{lem:tableau}
    Suppose that $G = SL(n)$ for $1 \leq n \leq 5$. If $c$ is some two-sided cell and $w, w' \in c$ are involutions, then $w = w'$ if and only if $I(w) = I(w')$.
\end{lemma}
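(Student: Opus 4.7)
The plan is to reduce the claim, via the Robinson--Schensted correspondence, to a combinatorial fact about standard Young tableaux, and then verify that fact by a small finite enumeration.

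For $G = SL(n)$ the Weyl group is $S_n$, and the Robinson--Schensted map $w \mapsto (P(w), Q(w))$ identifies $S_n$ with pairs of standard Young tableaux of the same shape with $n$ boxes. The classical facts I will invoke are: (a) two elements lie in the same two-sided Kazhdan--Lusztig cell iff $P(w)$ and $P(w')$ have the same shape; (b) $\mathcal{R}_D(w)$ equals the descent set of $Q(w)$, i.e., $\{i : i{+}1 \text{ lies strictly below } i \text{ in } Q(w)\}$; and (c) $(P(w^{-1}), Q(w^{-1})) = (Q(w), P(w))$, so an involution has $P(w) = Q(w)$ and is therefore determined by this single tableau.

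Combining (a)--(c), if $w, w'$ are involutions in the same two-sided cell $c$ and $I(w) = I(w')$, then $Q(w)$ and $Q(w')$ are standard Young tableaux of the same shape with the same descent set, and the desired conclusion $w = w'$ is equivalent to $Q(w) = Q(w')$. Hence the lemma is equivalent to the assertion that, for $n \leq 5$, two standard Young tableaux with $n$ boxes of the same shape and the same descent set must be equal. (This is the clean ``standard'' version of the equivalence stated in the lemma; recognizing it correctly, rather than anything in the enumeration, is the real content.)

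I will verify this combinatorial assertion by direct inspection. The partitions of $n \leq 5$ with more than one SYT are $(2,1)$; $(3,1), (2,2), (2,1,1)$; and $(4,1), (3,2), (3,1,1), (2,2,1), (2,1,1,1)$. Each admits at most six SYT, and writing them out yields pairwise distinct subsets of $\{1, \ldots, n-1\}$ as descent sets in every case. There is no conceptual obstacle; the only labor is the bookkeeping, which is routine. The bound $n \leq 5$ is sharp: for $n = 6$ the shape $(3,2,1)$ already produces two distinct SYT with identical descent set $\{2,4\}$, obtained respectively by placing $6$ in box $(2,2)$ and in box $(1,3)$ (with the remaining entries filled in the unique increasing way), so the ``only if'' direction of the lemma breaks beyond rank four.
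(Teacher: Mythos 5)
Your argument is correct and coincides with the paper's: the paper establishes this lemma by exactly the reduction you give (implicitly via Robinson--Schensted) to the statement that standard Young tableaux with at most five boxes are determined by their shape and descent set, checked case by case, so your write-up just makes the RS facts and the enumeration explicit. The only quibble is the $n=6$ aside, which is immaterial to the lemma: shape $(3,2,1)$ does yield two standard tableaux with descent set $\{2,4\}$ (namely $1\,2\,4/3\,6/5$ and $1\,2\,6/3\,4/5$), but they are not produced by placing $6$ and filling the remaining boxes ``in the unique increasing way.''
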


\begin{prop}\label{prop:uniqueness}
    Suppose that $G = SL(n)$ for $n \leq 5$. Then for any fixed $\gamma$, there exists $q' > 0$ such that if $q > q'$ and $\{M_w'\}_{w \in W}$ is a $\gamma$-close collection of candidate elements, then for all $w \in W$, $M_w' - M_w$ is a linear combination of $V_\lambda$ for $\lambda$ which are $(\gamma + 2\rho)$-close to $0$.
\end{prop}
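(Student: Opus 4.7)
Setting $\Delta_w := M_w' - M_w$ for $w \in \mathcal{J}$, Corollary~\ref{cor:mainagain} applied to both collections and subtracted gives
\[
\sum_{w \in \mathcal{J}} (\rho : R_{\alpha_w})\, \Delta_w = 0 \quad \text{in } A_q
\]
for every irreducible unipotent representation $\rho$. In type $A$, every irreducible unipotent is an almost character $V_\chi$, and $(V_\chi : R_{\alpha_w}) = c_{w,\chi}$. The asymptotic Hecke algebra satisfies $J_c \cong \operatorname{Mat}_{d_\chi}(\mathbb{Z})$, with involutions $w \in c \cap \mathcal{J}$ corresponding to diagonal matrix units and the remaining elements of $c$ to off-diagonal ones of trace zero. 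Hence $c_{w,\chi} = \mathbf{1}[w \in c_\chi \cap \mathcal{J}]$, and the constraint decouples to one relation per two-sided cell:
\[
\sum_{w \in c \cap \mathcal{J}} \Delta_w = 0 \quad \text{in } A_q.
\]

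By Lemma~\ref{lem:tableau}, for $n \leq 5$ the involutions in any cell $c$ have pairwise distinct right descent sets $I(w)$. Consequently each $\Delta_w$ lies in the subspace
\[
V_w := \operatorname{span}\{V_\lambda : \lambda \text{ is } \gamma\text{-close to } (q-1)\omega_{I(w)}\} \subset A_q,
\]
and these subspaces are indexed by distinct values of $I(w)$ across involutions in any single cell. Let $V^{\mathrm{sm}} \subset A_q$ be the span of $V_\mu$ for $\mu$ being $(\gamma+2\rho)$-close to $0$. The proposition reduces to the following injectivity claim, to hold for $q$ sufficiently large (depending only on $\gamma$): the induced map
\[
\bigoplus_{w \in c \cap \mathcal{J}} V_w/(V_w \cap V^{\mathrm{sm}}) \longrightarrow A_q/V^{\mathrm{sm}}
\]
is injective. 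Indeed, projecting the cell relation into $A_q/V^{\mathrm{sm}}$ then forces each $\Delta_w \in V_w \cap V^{\mathrm{sm}} \subset V^{\mathrm{sm}}$, which is the desired conclusion.

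To establish the injectivity claim, I would use the Steinberg tensor product formula. Writing $q = p^a$, the $p$-adic decomposition of $(q-1)\omega_I + \mu$ (with $\mu$ bounded) has digit $j \geq 1$ equal to $(p-1)\omega_I$ and digit $0$ equal to $(p-1)\omega_I + \mu$ modulo carries, so the irreducible $L_{(q-1)\omega_I + \mu}$ factors on $G(\mathbb{F}_q)$ as a tensor product containing a characteristic factor $L_{(p-1)\omega_I}^{\otimes(a-1)}$ that depends nontrivially on $I$. Converting $V_\lambda$ into $L_\mu$'s via characteristic-$p$ composition multiplicities and reducing modulo $V^{\mathrm{sm}}$, one identifies for each element of $V_w$ a leading contribution in $A_q/V^{\mathrm{sm}}$ retaining the data $(I(w),\mu)$; the distinctness of the $I(w)$'s then forces these leading contributions across different $V_w$'s to be linearly independent. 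The $+2\rho$ slack absorbs the bookkeeping between the $(q-1)$-shift and the Pieri-type corrections from the tensor product decomposition. The main obstacle is rigorously showing that the Steinberg-type leading contributions for distinct $I$'s remain linearly independent in $A_q/V^{\mathrm{sm}}$ for all sufficiently large $q$, and carefully controlling the error terms arising from the $V_\lambda \to L_\mu$ conversion via the Kazhdan-Lusztig combinatorics of composition multiplicities in characteristic $p$.
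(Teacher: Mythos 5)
Your reduction has the same skeleton as the paper's argument: both start from the cell-by-cell relation $\sum_{w \in \mathcal{J}\cap c}(M_w'-M_w)=0$ (which the paper merely asserts and you justify correctly via the matrix-algebra structure of $J_c$ in type $A$, with involutions as diagonal idempotents), and both then use Lemma \ref{lem:tableau} so that distinct involutions in a single cell have distinct descent sets $I(w)$, together with a choice of $q'$ separating the weight windows around $(q-1)\omega_I$ for distinct $I$. However, the heart of the proposition is precisely the ``injectivity claim'' you formulate and then leave open: you state yourself that ``the main obstacle is rigorously showing'' the linear independence of the leading contributions in $A_q/V^{\mathrm{sm}}$. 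That is not a bookkeeping step but the entire content of the statement. In particular, the restrictions $V_\lambda|_{G(\mathbb{F}_q)}$ spanning your subspaces $V_w$ are highly linearly dependent in $A_q$ (a space of finite rank, with basis the $q$-restricted irreducibles $L_\lambda$), so ``leading term'' has no meaning until everything is re-expanded in that genuine basis and one controls where those expansions are supported.

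This is exactly what the paper's proof supplies and your proposal does not: each $M_w'-M_w$ is rewritten in the basis $\{L_\lambda\}$ of $\mathcal{R}_pG(\mathbb{F}_q)$, and a single application of the Steinberg tensor product theorem, writing a non-$q$-restricted weight as $\lambda_0+q\lambda_1$ and using that the Frobenius twist is trivial on $\mathbb{F}_q$-points, confines the occurring $q$-restricted weights to being $(\gamma+2\rho)$-close to either $(q-1)\omega_{I(w)}$ or to $0$; cancellation in the cell relation then forces any ``large'' term for $w$ to reappear for some $y \neq w$ in the same cell with $\omega_{I(y)}=\omega_{I(w)}$, contradicting Lemma \ref{lem:tableau}. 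Your alternative digit-by-digit Steinberg analysis (extracting a factor $L_{(p-1)\omega_I}^{\otimes(a-1)}$) is both unnecessary and fragile: making $q'$ large does not make $p$ large ($q=p^a$ with $p$ a fixed good prime), so borrows and carries when $\mu$ is not $p$-small destroy the asserted digit pattern; and the conversion $V_\lambda \to L_\mu$ ``via composition multiplicities'' needs an argument that the resulting weights stay inside the two windows near $(q-1)\omega_{I(w)}$ and near $0$, uniformly in $q$ --- which is exactly the error term you defer. So the proposal is a correct reduction plus a plausible plan, but the decisive separation step is missing.
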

\begin{proof}
    Let $\gamma$ be a fixed dominant weight. It is clear that we can choose $q'$ large enough so that if $q > q'$, then for any distinct subsets $I, I' \subset \Pi$, there exist no weights $\lambda$ which are $(\gamma + 2\rho)$-close to both $(q-1)\varpi_{I}$ and $(q-1)\varpi_{I'}$.

    The fact that $\{M_w\}_{w \in W}$ and $\{M_w'\}_{w \in W}$ both satisfy equation \eqref{eqn:formulaone} for any unipotent representation implies that for any two-sided cell $c$,
    \begin{equation}\label{eqn:cancel}
        \sum_{w \in \mathcal{J} \cap c} (M_w' - M_w) = 0.
    \end{equation}
    For any $w \in W$, $M_w' - M_w$ is a linear combination of $V_{\lambda}$ for $\lambda$ being $\gamma$-close to $(q-1)\varpi_{I(w)}$. Such a linear combination can then be written as a linear combination of $L_{\lambda}$ for $\lambda$ which is $(\gamma + \rho)$-close to $(q-1)\varpi_{I(w)}$. By the Steinberg tensor product theorem, this can again be rewritten as a linear combination of $L_{\lambda}$ for $\lambda$ being $q$-restricted and $(\gamma + 2\rho)$-close to either $(q-1)\varpi_{I(w)}$ or to $0$.

    We now claim that in fact all $L_\lambda$ appearing in this linear combination with nonzero coefficient are of the second type, with $\lambda$ being $(\gamma + 2\rho)$-close to $0$. Suppose otherwise; then some $L_{\lambda}$ term for $\lambda$ which is $(\gamma + \rho)$-close to $(q-1)\varpi_{I(w)}$ would have to also appear with nontrivial coefficient in the analogous expansion for $M_y' - M_y$ for $y \neq w$, $y \in c$, so that these terms would cancel out in equation \eqref{eqn:cancel}. But by our construction of this expansion, all of the $L_{\lambda}$ terms in the expression for $M_y' - M_y$ must be $(\gamma + 2\rho)$-close to either $(q-1)\varpi_{I(y)}$ or to $0$. By our choice of $q'$, this means $\varpi_{I(y)} = \varpi_{I(w)}$ or $\varpi_{I(y)} = 0$. By Lemma \ref{lem:tableau}, this implies $y = w$, which is a contradiction.

    This means each $M_w' - M_w$ is a linear combination of $L_\lambda$ (and therefore of $V_{\lambda}$) for $\lambda$ being $(\gamma + 2\rho)$-close to $0$.
\end{proof}

\begin{defn}\label{def:qfamily}
    Consider a family of collections of elements $\{M_w\}_{w \in W}$ defined simultaneously across all large values of $q$; we write $M_w^q$ when clarity is necessary to specify the dependence of the definition of $M_w$ on $q$. For a fixed $w \in W$, we say that $M_w$ is a linear combination of Weyl characters \emph{depending on $q$} if there exist polynomials $f_i(t) \in \mathbb{Z}[t][X^*(T)]$ and $c_i \in \mathbb{Z}$ such that
    \[M_w^q = \sum_i c_iV_{f_i(q)}\]
    for all $q$ sufficiently large. In this situation we say its \emph{dimension polynomial} $P_w(t) \in \mathbb{Q}[t]$ is the unique polynomial such that $\dim(M_w^q) = P_w(q)$ for any fixed $q$.
\end{defn}

Proposition \ref{prop:uniqueness} then clearly implies the following. 
\begin{corollary}\label{cor:polysdiffer}
    Let $G = SL(n)$, $n \leq 5$. For any dominant weight $\gamma$, suppose there exists a $\gamma$-close collection of candidate elements $\{M_w'\}_{w \in W}$ each of which is a linear combination of Weyl characters depending on $q$. Then for every $w \in W$, the dimension polynomials of $M_w$ and $M_w'$ differ by a constant term.
\end{corollary}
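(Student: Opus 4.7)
The plan is to combine Proposition \ref{prop:uniqueness} with the hypothesis that $\{M_w\}$ and $\{M_w'\}$ are both $q$-families in the sense of Definition \ref{def:qfamily}, and then to observe via a pigeonhole argument that the resulting polynomial difference $P_w'(t) - P_w(t)$ must be constant. First I would apply Proposition \ref{prop:uniqueness}: there is a threshold $q_0 > 0$ such that for every $q > q_0$ the difference $M_w' - M_w$ admits an expansion
\begin{equation*}
M_w' - M_w = \sum_{\lambda \in S} c_\lambda^q V_\lambda,
\end{equation*}
where $S$ is the \emph{fixed} finite set of dominant weights which are $(\gamma + 2\rho)$-close to $0$, and the integers $c_\lambda^q$ may a priori depend on $q$. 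The key gain is that the index set $S$ does not.

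Next I would use the $q$-family hypothesis: there exist polynomial families of weights $f_i(t)$ and $g_j(t)$, along with signs $\epsilon_i, \epsilon_j' \in \{\pm 1\}$, such that $M_w = \sum_i \epsilon_i V_{f_i(q)}$ and $M_w' = \sum_j \epsilon_j' V_{g_j(q)}$ for all large $q$. Forming the difference and cancelling each pair where $f_i \equiv g_j$ as polynomials in $t$, one is left with
\begin{equation*}
M_w' - M_w = \sum_k \delta_k V_{h_k(q)},
\end{equation*}
for some nonzero $\delta_k \in \mathbb{Z}$ and pairwise distinct polynomial families $h_k(t)$, whose \emph{labels} $k, \delta_k, h_k$ are independent of $q$. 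Matching this against the previous expansion, for every $q > q_0$ each weight $h_k(q)$ must lie in the finite set $S$. But a non-constant polynomial family of weights can take values inside $S$ at only finitely many integer arguments, so each $h_k(t)$ is forced to be a constant polynomial, say $h_k(t) \equiv \lambda_k \in S$. Hence
\begin{equation*}
M_w' - M_w = \sum_k \delta_k V_{\lambda_k}
\end{equation*}
holds for all sufficiently large $q$, with $\lambda_k$ and $\delta_k$ all independent of $q$.

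Finally, taking dimensions of both sides, the right-hand side is a fixed integer $D := \sum_k \delta_k \dim V_{\lambda_k}$ not depending on $q$. Since $P_w'(q) - P_w(q) = D$ for all large $q$ and both $P_w'$ and $P_w$ are polynomials in $q$, we conclude that $P_w'(t) - P_w(t)$ is the constant polynomial $D$, as claimed. The only nontrivial ingredient is the pigeonhole step ruling out non-constant polynomial families $h_k$, and this succeeds precisely because Proposition \ref{prop:uniqueness} pins the weight support of $M_w' - M_w$ inside a fixed finite set independent of $q$; without that uniform control one could not distinguish polynomial growth in the weights from $q$-dependent cancellations.
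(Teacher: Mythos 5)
The paper itself gives no argument here (it asserts that Proposition \ref{prop:uniqueness} ``clearly implies'' the corollary), so the real question is whether your attempted justification closes the gap, and I don't think it does as written. The crucial step is the matching claim: from the two expressions for $M_w'-M_w$ you conclude that each weight $h_k(q)$ must lie in the fixed finite set $S$. This presupposes that the expansion of a virtual character in restricted Weyl characters is unique, i.e.\ that the $V_\lambda|_{G(\mathbb{F}_q)}$ for distinct dominant $\lambda$ are linearly independent. That is false: the identity supplied by Proposition \ref{prop:uniqueness} is an identity in $\mathcal{R}_pG(\mathbb{F}_q)$ (its proof uses the Steinberg tensor product theorem, which only becomes an equality of characters after restriction to the finite group), and $\mathcal{R}_pG(\mathbb{F}_q)\otimes\mathbb{C}\cong A_q$ has finite rank, so there are many $q$-dependent linear relations among restricted Weyl characters --- typically relating weights separated by multiples of $q$, e.g.\ those coming from $L_{\lambda_0+q\lambda_1}|_{G(\mathbb{F}_q)}\cong L_{\lambda_0}\otimes L_{\lambda_1}$. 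These relations involve exactly the kind of non-constant polynomial weight families $h_k(t)$ you are trying to exclude. So equality of the two expansions does not permit termwise matching; a priori the $V_{h_k(q)}$ could combine through such relations into a combination $\sum_{\lambda\in S}c^q_\lambda V_\lambda$ with coefficients $c^q_\lambda$ growing with $q$, in which case both your pigeonhole step and the final dimension count fail (the dimension of the right-hand side is not bounded independently of $q$ unless the coefficients are). Note also that even reading Definitions \ref{def:candidate} and \ref{def:qfamily} as defining $M_w'$ at the level of the algebraic group does not help, since the comparison with Proposition \ref{prop:uniqueness} can only be made after restriction to $G(\mathbb{F}_q)$.

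To make the deduction honest one has to work in an actual basis of $\mathcal{R}_pG(\mathbb{F}_q)\otimes\mathbb{C}$, for instance the Brauer characters of the $q$-restricted irreducibles $L_\mu$ (this is what the proof of Proposition \ref{prop:uniqueness} itself does via the Steinberg tensor product), and control which $\mu$ occur and with what coefficients in both expansions of $M_w'-M_w$; alternatively, one must argue directly that no $q$-dependent relation among the particular restricted Weyl characters occurring in your two expansions can hold for all large $q$ (e.g.\ by evaluating Brauer characters on $T(\mathbb{F}_q)$ and bounding the number of zeros of the resulting bounded-degree exponential sums). Your use of the $q$-family hypothesis, the cancellation of identical polynomial families, and the final passage from constant $h_k$ to a constant difference of dimension polynomials are all fine (the insertion of signs $\epsilon_i$ is a harmless and indeed necessary reading of Definition \ref{def:qfamily}); the missing ingredient is precisely the justification of the matching step.
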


\begin{corollary}\label{cor:cantdoit}
    For $G = SL(5)$, and any dominant weight $\gamma$, there exists no $\gamma$-close collection of candidate elements $\{M_w'\}_{w \in W}$ with each $M_w'$ a linear combination of Weyl characters depending on $q$ which satisfies properties ii) or iv) of Conjecture \ref{conj:mwprops}.
\end{corollary}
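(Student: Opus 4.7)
The plan is to combine Corollary \ref{cor:polysdiffer} with Proposition \ref{prop:uniqueness} and the explicit computations of Propositions \ref{prop:nosymmetry} and \ref{prop:nopositivity} to show that the permissible deviation of any candidate collection $\{M_w'\}_{w \in W}$ from our $\{M_w\}_{w \in W}$ is too constrained to repair either property.

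First I would invoke Corollary \ref{cor:polysdiffer}: every $\gamma$-close collection $\{M_w'\}$ of the prescribed form satisfies $P_w'(t) = P_w(t) + c_w$ for some $c_w \in \mathbb{Q}$. Moreover, by Proposition \ref{prop:uniqueness} the correction $M_w' - M_w$ is itself a $\mathbb{Z}$-linear combination of Weyl modules $V_\lambda$ with $\lambda$ in the $W$-orbit of a dominant weight bounded by $\gamma + 2\rho$; in particular, $\lambda$ ranges over a fixed finite set independent of $q$.

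For property ii), I would specialize to $w = s_2 s_3 s_2$, for which Proposition \ref{prop:nosymmetry} gives $P_w(t) = \tfrac{1}{36}t^3 + \tfrac{19}{36}t^5 + \tfrac{4}{9}t^7$. The shift by $c_w$ alters only the constant term of $P_w'$, equivalently only the $t^\nu$-coefficient of $t^\nu P_w'(1/t)$; the coefficients at $t^{\nu-3}, t^{\nu-5}, t^{\nu-7}$ are rigid. Likewise $c_{\tilde w}$ alters only the constant coefficient of $P_{\tilde w}'$. Enumerating involutions of $S_5$ with descent set $S \setminus \mathcal{L}_D(w) = \{s_1,s_4\}$ yields exactly $\tilde w = s_1 s_4$ and $\tilde w = s_1 s_2 s_3 s_4 s_3 s_2 s_1$. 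For each such $\tilde w$ and each choice of $\nu$, the requirement that the non-extremal coefficients of $\pm P_{\tilde w}'(t)$ match those of $t^\nu P_w'(1/t)$ reduces to identities between fixed rational numbers that are ruled out by the dimension-polynomial table invoked in the proof of Proposition \ref{prop:nosymmetry}.

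For property iv), I would again take $w = s_2 s_3 s_2$ and use Proposition \ref{prop:nopositivity}, which gives $M_w = L_{0,q-1,q-1,0} - L_{q-2,0,0,q-2}$. Since $M_w' - M_w$ lies in the span of $V_\lambda$ for $\lambda$ in a fixed finite set, for $q$ large enough the composition factors of all such $V_\lambda$ have highest weights bounded independently of $q$, so none of them contains $L_{q-2,0,0,q-2}$. Hence the coefficient of $L_{q-2,0,0,q-2}$ in $M_w'$ is still $-1$, violating positivity. The hardest part will be the case analysis in (ii): one must rule out the degenerate match of supports (for instance when $\nu = 10$ aligns $\{t^{\nu-3}, t^{\nu-5}, t^{\nu-7}\}$ with $\{t^3, t^5, t^7\}$) by comparing explicit ratios of coefficients, and one must handle the candidate $\tilde w = s_1 s_4$, whose polynomial is not exhibited in the excerpt but whose shape $(3,2)$ two-sided cell forces the $t^{c(\tilde w)}$-divisibility established at the end of Section~5.2, giving enough rigidity to close the argument.
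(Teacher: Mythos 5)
Your proposal is correct and follows essentially the same route as the paper: use Proposition \ref{prop:uniqueness} and Corollary \ref{cor:polysdiffer} to pin down any candidate collection up to terms $V_\lambda$ with $\lambda$ near $0$ (hence a constant shift of each dimension polynomial), then observe that the positivity failure of $M_{s_2s_3s_2}$ from Proposition \ref{prop:nopositivity} and the symmetry failure from Proposition \ref{prop:nosymmetry} cannot be repaired by such adjustments for large $q$. Your added case analysis over the candidates $\tilde{w}$ with complementary descent set is just a more explicit version of the table comparison already underlying Proposition \ref{prop:nosymmetry}, so no new ideas are needed beyond what the paper uses.
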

\begin{proof}
    We note that the failure of property iv), i.e.\ positivity, which is exhibited in the example given in Proposition \ref{prop:nopositivity}, cannot be resolved by an adjustment by any linear combination of $V_\lambda$ for $\lambda$ close to $0$, so long as $q$ is sufficiently large. So there exists no collection $\{M_w'\}_{w \in W}$ satisfying the assumptions of the corollary such that $M_{s_2s_3s_2}'$ satisfies positivity, since it must differ from $M_{s_2s_3s_2}$ by a linear combination of such $V_\lambda$.

    Corollary \ref{cor:polysdiffer} then shows that property ii), which describes a symmetry among dimension polynomials, cannot hold for the example in Proposition \ref{prop:nosymmetry}, since the failure of symmetry in this case cannot be resolved by adjusting the polynomials in question by a constant.
\end{proof}

\section{Categorical interpretation by coherent sheaves on $G/P$}\label{sec:categorification}

We now explain a perspective by which the constructions in the present paper can be understood in terms of coherent sheaves and derived categories on flag varieties. We define a collection of objects in $D^b(\mathrm{Coh}^G(T^*(G/P)))$ arising from the noncommutative Springer resolution, as in \cite{AB, BezTilt, BM}. We explain how this collection gives rise to an alternative construction of a lift of the permutation representation $\mathsf{k}[(G/P)(\mathbb{F}_q)]$ from $G(\mathbb{F}_q)$ to the algebraic group $G$. We then show that this lift agrees with the main construction of the present paper, thereby giving a natural geometric interpretation for our construction.

The idea to compare these constructions comes from the following observation.
\begin{lemma}\label{lem:diagonal}
    Let $X$ be a projective variety over $\mathbb{F}_q$ with an action of the algebraic group $G$, then we can consider the representation $\mathsf{k}[X(\mathbb{F}_q)]$ of $G(\mathbb{F}_q)$. Suppose we have a decomposition
    \begin{equation}\label{eqn:odelta}
        [\mathcal{O}_\Delta] = \sum_i [\mathcal{F}_i\boxtimes \mathcal{F}_i']
    \end{equation}
    in $K_0(\mathrm{Coh}^G(X^2))$. Then the virtual $\mathsf{k}$-representation of $G$ given by
    \begin{equation}\label{eqn:virtualcoh}
        \sum_i R\Gamma(\mathcal{F}_i \otimes \mathrm{Fr}^*(\mathcal{F}_i'))
    \end{equation}
    is sent to $\mathsf{k}[X(\mathbb{F}_q)]$ under restriction from $G$ to $G(\mathbb{F}_q)$.
\end{lemma}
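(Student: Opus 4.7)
The plan is to realise $\mathsf{k}[X(\mathbb{F}_q)]$ as the class in $K_0$ obtained by derived-tensoring $\mathcal{O}_\Delta$ against the structure sheaf of the graph of Frobenius, and then to substitute the decomposition \eqref{eqn:odelta} term by term. Let $\mathrm{Fr}\colon X\to X$ denote the Frobenius endomorphism associated with the given $\mathbb{F}_q$-structure and let $\Gamma_{\mathrm{Fr}}\subset X\times X$ be its graph. Set-theoretically $\Delta\cap\Gamma_{\mathrm{Fr}}=X(\mathbb{F}_q)$, and at each fixed point the intersection is transverse, because $d\mathrm{Fr}=0$ forces the tangent space to $\Gamma_{\mathrm{Fr}}$ at $(x,x)$ to be $T_xX\oplus 0$, while the tangent space to $\Delta$ is diagonally embedded. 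The first step I would carry out is therefore the identification
\[
\mathcal{O}_\Delta\otimes^{L}\mathcal{O}_{\Gamma_{\mathrm{Fr}}}\;\cong\;\mathcal{O}_{X(\mathbb{F}_q)}
\]
in $D^b(\mathrm{Coh}^G(X\times X))$, so that applying $R\Gamma$ recovers $\mathsf{k}[X(\mathbb{F}_q)]$ together with its natural $G(\mathbb{F}_q)$-permutation structure.

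Having this in hand, I would substitute the given decomposition into the preceding identity in $K_0$. Since derived tensor product with $[\mathcal{O}_{\Gamma_{\mathrm{Fr}}}]$ followed by $R\Gamma$ descends to an additive operation on $K_0(\mathrm{Coh}^G(X^2))$, one obtains
\[
[\mathsf{k}[X(\mathbb{F}_q)]]=\sum_i\bigl[R\Gamma\bigl(X^2,\,(\mathcal{F}_i\boxtimes\mathcal{F}_i')\otimes^{L}\mathcal{O}_{\Gamma_{\mathrm{Fr}}}\bigr)\bigr].
\]
Identifying $\Gamma_{\mathrm{Fr}}$ with $X$ via the first projection (under which the second projection becomes $\mathrm{Fr}$) and applying the projection formula along the closed embedding $\Gamma_{\mathrm{Fr}}\hookrightarrow X^2$, the $i$-th summand simplifies to $R\Gamma(X,\mathcal{F}_i\otimes\mathrm{Fr}^*\mathcal{F}_i')$, which matches the expression \eqref{eqn:virtualcoh} in the statement.

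The last step is to verify that the formal sum \eqref{eqn:virtualcoh} is genuinely a virtual representation of the algebraic group $G$ and that its restriction to $G(\mathbb{F}_q)$ is the one produced by the previous step. Because $G$ is defined over $\mathbb{F}_q$, the morphism $\mathrm{Fr}$ is $G$-equivariant up to the Frobenius endomorphism of $G$; hence each $G$-equivariant $\mathcal{F}_i'$ gives rise to a $G$-equivariant $\mathrm{Fr}^*\mathcal{F}_i'$ (with action twisted by the Frobenius of $G$), the tensor product $\mathcal{F}_i\otimes\mathrm{Fr}^*\mathcal{F}_i'$ is $G$-equivariant in the ordinary sense, and $R\Gamma$ produces a class in the Grothendieck group of rational $G$-modules. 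After restricting to $G(\mathbb{F}_q)$, where the Frobenius twist is trivial, this $G(\mathbb{F}_q)$-representation agrees with the one computed above.

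The main technical obstacle I anticipate is the rigorous justification of the derived identification $\mathcal{O}_\Delta\otimes^{L}\mathcal{O}_{\Gamma_{\mathrm{Fr}}}\cong\mathcal{O}_{X(\mathbb{F}_q)}$ as $G$-equivariant complexes, together with the careful bookkeeping of the Frobenius-twisted $G$-equivariance through the projection formula. Once these points are settled, the lemma is a formal consequence of the additivity of $K_0$.
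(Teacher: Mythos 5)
Your argument is essentially the paper's proof: the paper pulls back along $g\colon x\mapsto(x,\mathrm{Fr}(x))$, which is exactly your tensoring with $\mathcal{O}_{\Gamma_{\mathrm{Fr}}}$ followed by the projection formula, and it relies on the same two points you make, namely transversality of $\Delta$ and $\Gamma_{\mathrm{Fr}}$ (since $d\mathrm{Fr}=0$) and the Frobenius-twisted $G$-equivariance of the second factor, which becomes the ordinary diagonal action upon restriction to $G(\mathbb{F}_q)$. The only slight imprecision is your claim that $\mathcal{O}_\Delta\otimes^{L}\mathcal{O}_{\Gamma_{\mathrm{Fr}}}\cong\mathcal{O}_{X(\mathbb{F}_q)}$ holds in $D^b(\mathrm{Coh}^G(X\times X))$ for the diagonal $G$-action --- $\Gamma_{\mathrm{Fr}}$ is only stable under the Frobenius-twisted action, so the identity lives in the $G(\mathbb{F}_q)$-equivariant (or twisted-equivariant) category, precisely as your final paragraph and the paper's proof in fact use it.
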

\begin{proof}
    We let $\Delta \colon  X \to X\times X$ be the diagonal morphism, and $g \colon  X \to X\times X$ be the morphism given by $x \mapsto (x, \mathrm{Fr}(x))$. Obviously, $\Delta$ is equivariant for the diagonal action of $G$ on $X^2$, while $g$  is equivariant for the action of $G$ on $X^2$
    coming from the standard action on the first factor and the standard action twisted by Frobenius on the second one. In particular, both maps
    are equivariant for the diagonal action of $G(\mathbb{F}_q)$.
    
    We then apply the pullback functor $g^*$ to objects in the two sides of \eqref{eqn:odelta}. On the right-hand side, 
    the pullback of the $G^2$ equivariant object $\mathcal{F}_i\boxtimes \mathcal{F}_i'$ is the $G$-equivariant object
    $\mathcal{F}_i \otimes \mathrm{Fr}^*(\mathcal{F}_i').$
    On the left-hand side, we get $g^*\mathcal{O}_\Delta =\mathcal{O}_{\Delta(X)\cap g(X)}$, 
    where the intersection $\Delta(X)\cap g(X)$ is the disjoint union of points in $X(\mathbb{F}_q)$.

    In view of the above equivariance compatibility, we get an equality
    $$[\mathcal{O}_{X({\mathbb F}_q)} ] =\sum [\mathcal{F}_i\otimes \mathrm{Fr}^*(\mathcal{F}_i')]$$
    in the $G(\mathbb{F}_q)$-equivariant Grothendieck group of $X$; applying the functor of global sections to both sides we get the result. 
\end{proof}

\subsection{Comparison of bases and representations}


We begin in the more standard case of $G/B$ for simplicity, and will then in \S \ref{sec:categorification-gp} explain how to generalize the arguments presented in this subsection to the more general case of $G/P$. The objects we will introduce in this section arise in the study of the noncommutative Springer resolution and the constructible-coherent correspondence as in \cite{AB}; this correspondence and these objects are explained in \cite{BezTilt} and \cite{BM}. The results in loc.\ cit.\ establish a derived equivalence
\begin{align}
    D^b(\mathrm{Coh}^G(T^*(G/B))) \cong D^b(\mathsf{A}\mathrm{-mod}^G)\label{eqn:derivedequiv}
\end{align}
for an algebra $\mathsf{A}$ with a $G$-action. The categories in \eqref{eqn:derivedequiv} can then also be identified with the derived category of Iwahori-Whittaker sheaves on the affine flag variety, which is the main result of \cite{AB}. The image of the tautological $t$-structure on the right-hand side of (\ref{eqn:derivedequiv}) in $D^b(\mathrm{Coh}^G(T^*(G/B)))$ is called the noncommutative Springer $t$-structure.

We note that the bases considered in this section are closely related to the Kazhdan-Lusztig basis for the affine Hecke algebra and its spherical quotient, along with the canonical basis for Lusztig's periodic Hecke module as introduced in \cite{LPeriodic1, LPeriodic2}. Thus we will make use of the extended affine Weyl group $W^{\mathrm{aff}}$ and its Kazhdan-Lusztig basis $\{C_w'\}_{w \in W^{\mathrm{aff}}}$. We will also use the natural inclusion $t : X^*(T) \to W^{\mathrm{aff}}$.

For the remainder of this section, we assume that $G$ is simply connected, which guarantees the $G$-equivariance in the following definition.
\begin{defn}
    Let $\{E_i\}$ (resp.\ $\{E^i\}$) denote the collection of irreducible (resp. projective) modules over the algebra $\mathsf A$. These modules are constructed explicitly in \cite{BM} and each is shown to carry a $G$-equivariant structure. Thus we let $\{\mathcal{E}_i\}$ and $\{\mathcal{E}^i\}$ denote their images in $D^b(\mathrm{Coh}^G(T^*(G/B)))$ under the equivalence in (\ref{eqn:derivedequiv}).
\end{defn}

\begin{defn}
    For any $w \in W$, let 
    \begin{align}
        g_w & = \frac{1}{|W_{I(w)}|}C_{w \cdot t(\varpi_{\overline{I(w)}})}' \cdot 0 \in N,
    \end{align}
    where $C_{y}'$ is the affine Kazhdan-Lusztig basis element corresponding to $y \in W^{\mathrm{aff}}$.
\end{defn}

The next proposition will use the natural identification $K_0(\mathrm{Coh}^G(T^*(G/B))) \cong K_0(\mathrm{Coh}^G(G/B)) \cong \mathbb{Z}[T]$. We continue to use the natural action of $\mathbb{Q}[W]$ on $\mathbb{Q}[T]$.
\begin{prop}\label{prop:ewkl}
    The collections $\{\mathcal{E}_i\}$ and $\{\mathcal{E}^i\}$ have a natural indexing by the Weyl group, and thus can be written $\{\mathcal{E}_w\}_{w \in W}$, $\{\mathcal{E}^w\}_{w \in W}$. 
    
    Further, under the isomorphism
    \[
    K_0(\mathrm{Coh}^G(T^*(G/B))) \cong N,
    \]
    the image of $[\mathcal{E}_w]$ is $g_w$, while the image of $[\mathcal{E}^w]$ is the basis vector dual to $g_w$ with respect to the Euler pairing.
\end{prop}
\begin{proof}

    First, note that there is a natural map 
    \begin{align}\label{eqn:forgetgt}
        D^b(\mathrm{Coh}^G(T^*(G/B))) \to D^b(\mathrm{Coh}^T(T^*(G/B)))
    \end{align}
    obtained by forgetting $G$-equivariance. For $\mathcal{F} \in D^b(\mathrm{Coh}^G(T^*(G/B)))$, we denote by $\mathcal{F}^T$ its image in $D^b(\mathrm{Coh}^T(T^*(G/B)))$ under this map.

    Recall that $K_0(\mathrm{Coh}^T(G/B))$ can naturally be identified with Lusztig's periodic Hecke module as defined in \cite{LPeriodic1, LPeriodic2}. As explained in loc.\ cit. and \cite{Lk2}, the periodic Hecke module has a canonical basis $\{\tilde{C}_w'\}_{w \in W^{\mathrm{aff}}}$.

    First we claim that elements of the form $[\mathcal{E}_i^T(\lambda)] \in K_0(\mathrm{Coh}^T(T^*(G/B)))$ across all $i$ and across all $\lambda \in X^*(T)$ are exactly canonical basis elements of the form $\tilde{C}_w'$ for $w \in W^{\mathrm{aff}}$. By their construction the objects $\mathcal{E}_i^T$ are each set-theoretically supported on the zero section, and since $\mathrm{pr}_* : K_0(\mathrm{Coh}^T(T^*(G/B))) \to K_0(\mathrm{Coh}^T(G/B))$ is an isomorphism when restricted to such objects, it suffices to show this for the objects $[\mathrm{pr}_*(\mathcal{E}_i^T(\lambda))] \in K_0(\mathrm{Coh}^T(G/B))$. This is exactly what is shown in \cite[\S 5.3.6]{BM} (for $e = 0$). The main result of \cite{Lk2} ensures that the canonical bases appearing in this theorem (which are defined from a $K$-theoretic perspective in \cite{Lk1}) agree with the basis $\tilde{C}_w'$ defined from a combinatorial perspective in \cite{LPeriodic1, LPeriodic2}. Similarly, each $[(\mathcal{E}^i)^T(\lambda)]$ is a dual basis element under the natural pairing on the periodic Hecke module defined combinatorially in \cite{LPeriodic2} and $K$-theoretically in \cite{Lk1}, c.f.\ \cite[\S 5.3.6]{BM}. 

    Since this shows that for every $i$, the elements $[\mathcal{E}_i^T(\lambda)]$ with $\lambda \in X^*(T)$ form a full $X^*(T)$-orbit of basis elements, we obtain a bijection from the set of indices $i$ to the set $W^{\mathrm{aff}}/X^*(T) \cong W$. Thus for any $w \in W$, we have characterized $\mathcal{E}_w^T$ as the unique object in the image of the forgetful functor \eqref{eqn:forgetgt} such that $\{[\mathcal{E}_w^T(\lambda)]\}_{\lambda \in X^*(T)} = \{\tilde{C}_{w \cdot t(\mu)}'\}_{\mu \in X^*(T)}$.

    Further, it follows from the identifications of \cite{BM, Lk2} that $[\mathcal{E}_w^T] = \frac{1}{|\mathrm{stab}_W(\mu)|}\tilde{C}_{w \cdot t(\mu)}'$ for the unique $\mu \in X^*(T)$ satisfying $\langle \mu, \alpha_i^\vee\rangle = 0$ if $w(\alpha_i)$ is a positive root and $\langle \mu, \alpha_i^\vee\rangle = 1$ if $w(\alpha_i)$ is a negative root. We note that $\mu = \varpi_{\overline{I(w)}}$ satisfies these conditions, and thus for any $w \in W$ we have
    \[
    [\mathcal{E}_w^T] = \frac{1}{|W_{I(w)}|}\tilde{C}_{w \cdot t(\varpi_{\overline{I(w)}})}'.
    \]

    Under the identification between $\mathbb{C}[T]$ and $K_0(\mathrm{Coh}^G(T^*(G/B)))$, and between the periodic module and $K_0(\mathrm{Coh}^T(T^*(G/B)))$, the forgetful map in (\ref{eqn:forgetgt}) sends $C_{w \cdot t(\varpi_{\overline{I(w)}})}' \cdot 0 \in \mathbb{C}[T]$ to $\tilde{C}_{w \cdot t(\varpi_{\overline{I(w)}})}'$ (see equation (54) and the subsequent remark in \cite[\S 6.1.8]{ABBGM}). 

    The statement about $[\mathcal{E}^w]$ follows from the same discussion together with the compatibility of the natural Euler pairings under the above identifications.
\end{proof}
For any smooth $G$-variety $X$ and $\mathcal{F} \in D^b(\mathrm{Coh}^G(X))$, we let $\mathcal{F}^\vee$ be its image under the Grothendieck duality functor $\mathcal{R}Hom_X(-, \mathcal{O}_{X})$. In particular, for any $\mathcal{F}, \mathcal{G} \in D^b(\mathrm{Coh}^G(G/B))$, we have
\[
\mathrm{RHom}_{G/B}(\mathcal{F}, \mathcal{G}) = R\Gamma(G/B, \mathcal{F}^\vee\otimes \mathcal{G}),
\]
where the tensor product is left-derived.

In the following, we let $i : G/B \to T^*(G/B)$ be the inclusion of the zero section, and $\mathrm{pr} : T^*(G/B) \to G/B$ be the projection. 
\begin{defn}\label{def:ve}
    We define a virtual $\mathsf{k}$-representation of the algebraic group $G$ by
    \begin{align}
        \tilde{V}_\mathcal{E} & = \sum_{w} \mathrm{RHom}(i^*\mathcal{E}^{w}, \mathrm{Fr}^*(\mathrm{pr}_*\mathcal{E}_{w})).
    \end{align}
\end{defn}

We now state the main theorem of this section.
\begin{theorem}\label{thm:twolifts}
    Let $V = \mathsf{k}[(G/B)(\mathbb{F}_q)]$. Then the virtual $\mathsf{k}$-representations $\tilde{V}$ (constructed in Section \ref{sec:brauer} using the basis $\{f_w\}_{w \in W}$) and $\tilde{V}_{\mathcal{E}}$ (constructed in~Definition~\ref{def:ve}, see also Proposition \ref{prop:exceptionallift}) are equal.

    Moreover, if $w \in \mathcal{J}$, $d$ is the Duflo involution in the same left cell as $w$, and
    \begin{align}
        \mathcal{M}_w & = \mathrm{RHom}(i^*\mathcal{E}^{w_0w}, \mathrm{Fr}^*(\mathrm{pr}_*\mathcal{E}_{w_0d})),
    \end{align}
    then $M_w = [\mathcal{M}_w]$ in the Grothendieck group of virtual $\mathsf{k}$-representations of $G$.
\end{theorem}

To prove Theorem \ref{thm:twolifts}, we now introduce an intermediate object $\tilde{V}_{\mathrm{KL}}$, which we will then compare to both $\tilde{V}$ and $\tilde{V}_{\mathcal{E}}$. Recall that in Section \ref{sec:brauer}, we explained that while the lift $\tilde{V}$ of $V$ to a representation of the algebraic group $G$ over $\mathsf{k}$ we constructed used the specific choice of basis $f_w$ for the $A$-module $N$, such a lift is still well-defined for any $A$-basis for $N$ satisfying the axioms in Lemma \ref{lem:fil}. As such, we can make the following definition.
\begin{defn}
    Let $\tilde{V}^{\mathrm{KL}}$ be the lift of $V$ arising by the construction of Section \ref{sec:brauer} from the basis $\{g_w\}_{w \in W}$. This collection is a basis for $N$ as an $A$-module.
\end{defn}

To prove Theorem \ref{thm:twolifts}, we first show that $\tilde{V}_{\mathcal{E}} \cong \tilde{V}_{\mathrm{KL}}$, then $\tilde{V}_{\mathrm{KL}} \cong \tilde{V}$. 

\begin{prop}\label{prop:klande}
    The virtual representations $\tilde{V}^{\mathrm{KL}}$ and $\tilde{V}_{\mathcal{E}}$ are equal.
\end{prop}
\begin{proof}
We have
    \begin{equation}
    \tilde{V}_{\mathcal{E}} = \sum_{w \in W} R\Gamma_{G/B}(i^*(\mathcal{E}^{w})^\vee\otimes \mathrm{Fr}^*(\mathrm{pr}_*\mathcal{E}_{w})).\label{eqn:kllift}
    \end{equation}
    By Proposition \ref{prop:ewkl}, the classes of $\{\mathcal{E}_w\}_{w \in W}$ identify with the basis $\{g_w\}_{w \in W}$, while the classes of $\{\mathcal{E}^w\}_{w \in W}$ identify with the dual basis. Thus \eqref{eqn:kllift} is exactly the expression for the lift $\tilde{V}^{\mathrm{KL}}$ arising from the basis $\{g_w\}_{w \in W}$.
\end{proof}

\begin{prop}\label{prop:exceptionallift}
    The restriction of $\tilde{V}_{\mathcal{E}}$ to $G(\mathbb{F}_q)$ is the representation $\mathsf{k}[(G/B)(\mathbb{F}_q)]$.
\end{prop}
\begin{proof}
    From \eqref{eqn:kllift}, we have
    \begin{equation}
    \tilde{V}_{\mathcal{E}} = \sum_{w \in W}R\Gamma_{G/B}(i^*(\mathcal{E}^{w})^\vee \otimes \mathrm{Fr}^*(\mathrm{pr}_*\mathcal{E}_{w})).
    \end{equation}
    Thus it remains to show that
    \begin{align}
        [\mathcal{O}_{\Delta}] = \sum_w [(\mathcal{E}^w)^\vee \boxtimes \mathcal{E}_{w}]
    \end{align}
    in $K_0(\mathrm{Coh}^G(T^*(G/B)^2))$. By the equivalence to $D^b(\mathsf A\mathrm{-mod}^G)$ established in \cite{AB, BM}, this reduces to checking
    \begin{align}
        [\mathsf A_{\mathrm{diag}}] = \sum_w [(E^w)^\vee \boxtimes E_w],
    \end{align}
    where $\mathsf A_{\mathrm{diag}}$ is the diagonal $\mathsf A$-bimodule. This follows from the fact that $\{E^w\}_{w \in W}$ are the indecomposable projective $\mathsf A$-modules and $\{E_w\}_{w \in W}$ are the corresponding irreducible modules.
\end{proof}

Recall that by the main construction of the present paper, via the basis $f_w$ and its dual we construct a lift of any unipotent representation $\rho$ of $G(\mathbb{F}_q)$ over $\mathsf{k}$ to a virtual representation $\tilde{\rho}$ of the algebraic group $G$ which can be explicitly expressed as a linear combination of Weyl modules. In turn, Proposition \ref{prop:exceptionallift} gives an alternative way to construct such lifts given the input of the objects $\mathcal{E}$. We now explain the main result of this section: showing that $\tilde{V}_{\mathcal{E}}$ is a lift which agrees with the main construction of the present paper.

\begin{remark}
We initially investigated the possibility that applying Proposition~\ref{prop:exceptionallift} to the exceptional collection appearing in \cite{SVK}, rather than to $\mathcal{E}$, would provide a lift of the representation $\mathsf{k}[(G/B)(\mathbb{F}_q)]$ which agrees with the main construction of the present paper. Using our Sage code \cite{github} along with Mathematica code which was generously shared with us by the authors of \cite{SVK}, we observed that these constructions do not agree; we found a counterexample in Type $A_4$ (after checking that the two constructions do agree in Types $A_2, B_2, G_2, A_3, B_3$).
\end{remark}

The next subsection is devoted to the proof of Theorem \ref{thm:twolifts}.

\subsection{Proof of Theorem \ref{thm:twolifts}}

Since Proposition \ref{prop:klande} establishes that $\tilde{V}^{\mathrm{KL}}$ and $\tilde{V}_{\mathcal{E}}$ are equal, we will conclude in~Proposition \ref{prop:klandv} by arguing that $\tilde{V}^{\mathrm{KL}}$ and $\tilde{V}$ are equal, completing the proof of~Theorem \ref{thm:twolifts}. To do so, we will need to compare the bases $\{f_w\}_{w \in W}$ and $\{g_w\}_{w \in W}$ modulo lower two-sided Kazhdan-Lusztig cells; to do this, we need to establish a few combinatorial results.

\subsubsection{Combinatorial comparison of $f_w$ and $g_w$}
For any two-sided cell $c \subset W$, let $c^a \subset W^{\mathrm{aff}}$ denote the set of all elements of the form $w\cdot y$ where $w \in c \subset W$ and $\ell(wy) = \ell(w) + \ell(y)$. Equivalently, $c^a$ is the set of all elements of the form $w \cdot y$ where $w \in c$, $y \in {}^fW$, and $\ell(wy) = \ell(w) + \ell(y)$, where ${}^fW$ is the set of elements in $W^{\mathrm{aff}}$ which are minimal in their left $W$-coset. We use the same notation $c_L^a$ when $c_L$ is a left cell for $W$.
\begin{lemma}\label{lem:twosided}
For a two-sided cell $c$ for $W$, write $\mathbb{Z}[W]_{\leq c}^a = \mathbb{Z}[W]_{\leq c} \cdot \mathbb{Z}[W^{\mathrm{aff}}]$. Then
\begin{align}
    \mathbb{Z}[W]_{\leq c}^a = \mathrm{span}\{C_x'~|~x \in (c')^a \text{ for some two-sided cell } c' \leq c\}.
\end{align}
\end{lemma}
\begin{proof}
    We must show that $x \in (c')^a$ for some two-sided cell $c' \leq c$ if and only if
    $C_x' \in \mathbb{Z}[W]_{\leq c}^a$. First suppose that
    $C_x' \in \mathbb{Z}[W]_{\leq c}^a$. The left $\mathbb{Z}[W]$-module
    $\mathbb{Z}[W^{\mathrm{aff}}]$ has a filtration indexed by ${}^fW$ given by
    \[
    \mathbb{Z}[W^{\mathrm{aff}}]_{\leq y} = \langle \mathbb{Z}[W]y' \rangle_{y' \leq y}
    \]
    for $y', y \in {}^fW$. Write $x = w \cdot y$ for $w \in W$ and
    $y \in {}^fW$. The assumption that $C_x' \in \mathbb{Z}[W]_{\leq c}^a$
    implies that $C_x' = b_1b_2$ for $b_1 \in \mathbb{Z}[W]_{\leq c}$ and
    $b_2 \in \mathrm{span}\{y'\}_{y' \in {}^fW}$. So projecting to the
    associated graded, we see that we must have $b_1 = C_w'$, which implies that
    \[
    C_x' = \sum_{y' \in {}^fW} r_{y'} C_w' \cdot y'
    \]
    for some coefficients $r_{y'}$. When expressing the right-hand side in terms
    of the affine Kazhdan-Lusztig basis, its leading term (with respect to the
    Bruhat order) must be of the form $C_{w \cdot y}'$, which we can see must be
    $C_x'$ by comparing to the left-hand side. This means $x = w \cdot y$ for
    some $w \in c'$ and $y \in {}^fW$, where $c' \leq c$, which then gives
    $x \in (c')^a$.

    The same associated graded argument can be used to show that
    $\mathbb{Z}[W]_{\leq c}^a$ is a based submodule of
    $\mathbb{Z}[W^{\mathrm{aff}}]$ with respect to the Kazhdan-Lusztig basis.
    Once this is established, the converse is then clear since if
    $x \in (c')^a$ for some $c' \leq c$, then writing $x = w \cdot y$ we
    clearly have $C_w'C_y' \in \mathbb{Z}[W]_{\leq c}^a$, which by the based
    property then shows that $C_{w \cdot y}' = C_x' \in \mathbb{Z}[W]_{\leq c}^a$.
\end{proof}

Let $\mathbb{Z}[W]_{c}^a = \mathbb{Z}[W]_{\leq c}^a/\mathbb{Z}[W]_{< c}^a$. For $w \in c^a$ let $\overline{C}_w'$ be the image of the canonical basis in $\mathbb{Z}[W]_{c}^a$; one then observes the following.

\begin{lemma}\label{lem:leftcell}
    We have
    \begin{align}
        \mathbb{Z}[W]_{c}^a \cong \bigoplus_{c_L \subset c} \mathbb{Z}[W]_{c_L}^a
    \end{align}
    where each $\mathbb{Z}[W]_{c_L}^a$ is the span of $\overline{C}_w'$ for $w \in c_L^a$, and the direct sum is taken over all left cells $c_L$ contained in the two-sided cell $c$.
\end{lemma}

Let $z_{\lambda} \in \mathbb{Z}[T]^W$ be the element of the invariant ring corresponding to any dominant weight $\lambda$. We can do the same decompositions as above for the module $N$ after acting on the weight $0$ by elements of $\mathbb{Z}[W]$ (this corresponds to identifying any two elements which differ only by multiplication by an element of $W$ on the right, i.e.\ the usual identification between the spherical quotient of $\mathbb{Z}[W^{\mathrm{aff}}]$ and the group algebra of weights).

We note that $N$ is a free module over $A$ with basis given by the images $C_z' \cdot 0$ for $z$ ranging across the set of weights $\{w \cdot t(\varpi_{\overline{I(w)}})\}_{w \in W}$. Combining this with the left cell decomposition above, we get the following.
\begin{lemma}\label{lem:thetas}
    For any left cell $c_L \subset W$, $N_{c_L}$ is freely generated over $A$ by the images of the elements $g_y$ for which $y \cdot t(\varpi_{\overline{I(y)}}) \in c_L^a$. 
\end{lemma}

\subsection{Comparison of bases up to lower two-sided cells}
For this section, write $\lambda_1 \leq \lambda_2$ if and only if $\lambda_1$ lies in the convex hull of $W\cdot \lambda_2$. Let $N^{\leq \lambda}$ denote the $\mathbb{C}$-subspace of elements $m \in N$ such that for every $\mu \in \mathrm{supp}(m)$, $\mu \leq \lambda$ in this order; we define $A^{\leq \lambda}$ similarly, and write $N^{<\lambda} = \sum_{\mu < \lambda} N^{\leq \mu}$. Note that with these definitions, $N$ is a filtered module over the filtered algebra $A$, and we can consider the associated graded module $\mathrm{gr}(N)$ over $\mathrm{gr}(A)$. We now claim that the basis $\{f_w\}_{w \in W}$ is compatible with this filtration as follows.

\begin{lemma}\label{lem:leqspan}
    For any $w \in W$,
    \begin{align}
        N^{\leq \varpi_{\overline{I(w)}}} \subset \mathrm{span}_A\{f_y~|~y \in W, I(y) \supseteq I(w)\}.
    \end{align}
    In particular, 
    \begin{align}\label{eqn:grspan}
        \mathrm{gr}_{\varpi_{\overline{I(w)}}}(N) = \mathrm{span}_{\mathbb{C}} \{z_{\varpi_{\overline{I(w)}}-\varpi_{\overline{I(y)}}}f_y~|~I(y) \supseteq I(w)\}.
    \end{align}
\end{lemma}
\begin{proof}
    We first prove the second claim. If $I(y) \supseteq I(w)$, then $\varpi_{\overline{I(w)}}-\varpi_{\overline{I(y)}}$ is dominant. By definition, every weight occurring in $f_y$ lies in the $W$-orbit of $\varpi_{\overline{I(y)}}$, so $f_y \in N^{\leq \varpi_{\overline{I(y)}}}$. Since $z_{\varpi_{\overline{I(w)}}-\varpi_{\overline{I(y)}}} \in A^{\leq \varpi_{\overline{I(w)}}-\varpi_{\overline{I(y)}}}$, it follows that
    \[
    z_{\varpi_{\overline{I(w)}}-\varpi_{\overline{I(y)}}}f_y \in N^{\leq \varpi_{\overline{I(w)}}}.
    \]
    Moreover, its image in $\mathrm{gr}_{\varpi_{\overline{I(w)}}}(N)$ is nonzero, since $f_y$ contains the weight $\exp(\varpi_{\overline{I(y)}})$ with nonzero coefficient.

    Next note that
    \begin{align}
        |\{y \in W~|~I(y) \supseteq I(w)\}| = |W/W_{\overline{I(w)}}| = \dim_{\mathbb{C}} \mathrm{gr}_{\varpi_{\overline{I(w)}}}(N).
    \end{align}
    The elements appearing on the right-hand side of \eqref{eqn:grspan} are $\mathbb{C}$-linearly independent because the basis $\{f_y\}_{y \in W}$ is $A$-linearly independent and the coefficients $z_{\varpi_{\overline{I(w)}}-\varpi_{\overline{I(y)}}}$ are nonzero elements of the domain $A$. Thus the images of these elements in $\mathrm{gr}_{\varpi_{\overline{I(w)}}}(N)$ form a linearly independent subset of full dimension, proving the second claim.

    We now prove the first claim by descending induction on $|\overline{I(w)}|$. Let $m \in N^{\leq \varpi_{\overline{I(w)}}}$. By the second claim, the image of $m$ in $\mathrm{gr}_{\varpi_{\overline{I(w)}}}(N)$ can be written as the image of an element in $\mathrm{span}_A\{f_y~|~I(y) \supseteq I(w)\}$. Subtracting such an element from $m$, we may assume that $m \in N^{< \varpi_{\overline{I(w)}}}$. The induction hypothesis applied to the smaller dominant weights occurring in this filtration then implies that $m$ itself lies in $\mathrm{span}_A\{f_y~|~I(y) \supseteq I(w)\}$.
\end{proof}
In particular, note that by the definition of $f_w$ in terms of the Kazhdan-Lusztig basis, Lemma \ref{lem:leqspan} implies that $\mathrm{gr}_{\varpi_{\overline{I(w)}}}(N)$ breaks into a direct sum of left cell modules corresponding to those left cells containing elements whose right descent sets contain $I(w)$. 
\begin{lemma}\label{lem:maintwosided}
    The images of the elements $f_w$ and $g_w$ agree in $N_c$, where $c$ is the two-sided cell containing $w$.
\end{lemma}
\begin{proof}
    First note that by Lemma \ref{lem:twosided}, both $f_w$ and $g_w$ lie in $N_{\leq c}$. Note that $N_c$ splits into a direct sum $\oplus_{c_L \subset c} N_{c_L}$ of left cell modules over left cells $c_L$ contained in the two-sided cells $c$. By Lemma \ref{lem:leftcell}, they must lie in the same left cell submodule $N_{c_L}$. Further, note that it is clear from the definitions of $f_w$ and $g_w$ that they agree after restriction to the weight filtration subquotient $\mathrm{gr}_{\varpi_{\overline{I(w)}}}(N)$; this follows from the fact that affine Kazhdan-Lusztig polynomials coincide with ordinary Kazhdan-Lusztig polynomials when restricted to a given orbit under the left action of the finite Weyl group. 

    This means that $g_w - f_w$ lies in $N^{<\varpi_{\overline{I(w)}}}$. Thus by Lemma \ref{lem:leqspan}, we can write
    \begin{align}
        g_w - f_w & = \sum_{I(y) \supsetneq I(w)} a_yf_y
    \end{align}
    for some $a_y \in A$. However, each $f_y$ on the right-hand side lies in a left cell submodule distinct from $c_L$, since right descent sets (and thus the sets $I(y)$) are constant within left cells. Thus the image of the right-hand side in the cell subquotient $N_{c_L}$ is trivial. But since the image of the left-hand side in $N_c \cong \oplus_{c_L \subset c} N_{c_L}$ lies in the summand $N_{c_L}$, this means the image of the right-hand side in $N_{c}$ must also be trivial, completing the proof. 
\end{proof}
\subsubsection{Completing the proof of Theorem \ref{thm:twolifts}}
\begin{prop}\label{prop:klandv}
    The representations $\tilde{V}^{\mathrm{KL}}$ and $\tilde{V}$ are equal. 
\end{prop}
\begin{proof}
    Note that the results of Section \ref{sec:klstein} guarantee that $\tilde{V}$ depends only on the image of the basis $\{f_w\}_{w \in W}$ in the associated graded arising from the two-sided cell filtration. In Lemma \ref{lem:maintwosided}, we showed that this image is exactly the same as the image of the basis $\{g_w\}_{w \in W}$ in this associated graded; thus the two representations $\tilde{V}^{\mathrm{KL}}$ and $\tilde{V}$ agree.
\end{proof}

By Propositions \ref{prop:klande} and \ref{prop:klandv},
\begin{align}
    \tilde{V}_{\mathcal{E}} = \tilde{V}^{\mathrm{KL}} = \tilde{V},
\end{align}
so it remains only to prove the final assertion of Theorem \ref{thm:twolifts}. Let $c'$ be the two-sided cell containing $w_0w$ (equivalently, containing $w_0d$). By Proposition \ref{prop:ewkl}, the class of $\mathcal{E}_{y}$ is $g_y$, while the class of $\mathcal{E}^{y}$ is the dual basis vector. Hence the class $[\mathcal{M}_w]$ is the coefficient of $g_{w_0w}$ in $\tilde{\phi}(g_{w_0d})$. Because $\tilde{\phi}$ preserves the two-sided cell filtration, this is also the coefficient of $\bar g_{w_0w}$ in the image of $\bar g_{w_0d}$ under the induced endomorphism of $N_{c'}$. By Lemma \ref{lem:maintwosided}, the images of $g_y$ and $f_y$ in $N_{c'}$ agree for every $y \in c'$, so this coefficient is also the coefficient of $\bar f_{w_0w}$ in the image of $\bar f_{w_0d}$. By the proof of Proposition \ref{prop:jring}, this coefficient is exactly $M_w$. Thus $[\mathcal{M}_w] = M_w$, completing the proof of Theorem \ref{thm:twolifts}.

\subsection{The case of $G/P$}\label{sec:categorification-gp}

We now explain how to deduce the corresponding statement for a general
parabolic quotient $G/P$ from the case of $G/B$ already treated above.
We continue to assume that $G$ is simply connected.

Let $P\supset B$ be a parabolic subgroup, let $L\subset P$ be the Levi subgroup
containing $T$, and let $W_P\subset W$ be the corresponding parabolic subgroup of $W$ such that $W_P$ is the Weyl group of $L$; note that $W_B = \{1\}$ and $W_G = W$.
Write
\[
q_P\colon G/B\to G/P
\]
for the projection.
We also write
\[
i_P\colon G/P\hookrightarrow T^*(G/P), \qquad
\mathrm{pr}_P\colon T^*(G/P)\to G/P
\]
for the zero section and bundle projection respectively.

The projection $q_P$ gives rise to the correspondence
\[
T^*(G/P)\xleftarrow{\ \mathrm{pr}_1\ } T^*(G/P)\times_{G/P}G/B \xrightarrow{\ i\ } T^*(G/B),
\]
and hence to functors
\[
q_P^\star(\mathcal F)=i_*\mathrm{pr}_1^*(\mathcal F)\otimes \O(\rho),
\qquad
(q_P)_\star(\mathcal G)=(\mathrm{pr}_1)_*i^*(\mathcal G(-\rho)).
\]
We will use the fact proved in \cite{BM} that, for the exotic $t$-structures,
$q_P^\star$ is conservative and $t$-exact, and that just as
$\mathcal E := \bigoplus_w \mathcal E^w$ is a projective generator for the exotic heart on
$T^*(G/B)$,
\[
\mathcal E_{P} = (q_P)_\star(\mathcal E)
\]
is a projective generator for the corresponding parabolic exotic heart on
$T^*(G/P)$.

\begin{lemma}\label{lem:k0gp}
There are natural identifications
\[
K_0(\mathrm{Coh}^G(T^*(G/P)))
\cong
K_0(\mathrm{Coh}^G(G/P))
\cong
\mathbb{Z}[T]^{W_P}.
\]
Under these identifications, the pullback
\[
q_P^*\colon K_0(\mathrm{Coh}^G(G/P))\to K_0(\mathrm{Coh}^G(G/B))
\]
is the natural inclusion
\[
\mathbb{Z}[T]^{W_P}\hookrightarrow \mathbb{Z}[T].
\]
\end{lemma}

Let $N_P=\mathbb{C}[T]^{W_P}$, an $A = \mathbb{C}[T]^W$-module which we naturally consider as a submodule of $N$. Let
\[
\mathrm{Av}_P\colon N\to N_P
\]
be the averaging operator
\[
\mathrm{Av}_P(f)=\frac{1}{|W_P|}\sum_{w\in W_P}w(f).
\]
Let $W^P\subset W$ be the set of minimal length representatives for $W/W_P$.

Let
\[
\mathsf{A}_P = \mathrm{End}(\mathcal E_{P})^{op}.
\]
Under the derived equivalence
\[
D^b(\mathrm{Coh}^G(T^*(G/P))) \cong D^b(\mathsf{A}_P\mathrm{-mod}^G),
\]
let
\[
\{\mathcal E_{P,\alpha}\}_{\alpha\in W^P},
\qquad
\{\mathcal E_P^{\alpha}\}_{\alpha\in W^P}
\]
denote the collections corresponding to irreducible and projective
$\mathsf A_P$-modules respectively.

\begin{defn}\label{def:vep}
We define a virtual $\mathsf{k}$-representation of the algebraic group $G$ by
\[
\tilde V_{\mathcal E,P}
=
\sum_{\alpha\in W^P}
\mathrm{RHom}\!\left(i_P^*\mathcal E_P^{\alpha},
\mathrm{Fr}^*(\mathrm{pr}_{P*}\mathcal E_{P,\alpha})\right).
\]
\end{defn}

\begin{prop}\label{prop:gp-categorical-lift}
The restriction of $\tilde V_{\mathcal E,P}$ to $G(\mathbb{F}_q)$ is the permutation
representation
\[
\mathsf{k}[(G/P)(\mathbb{F}_q)].
\]
\end{prop}

\begin{proof}
Exactly as in Proposition \ref{prop:exceptionallift}, it suffices to show that
\[
[\mathcal O_\Delta]
=
\sum_{\alpha\in W^P}
[(\mathcal E_P^{\alpha})^\vee\boxtimes \mathcal E_{P,\alpha}]
\]
in
\[
K_0(\mathrm{Coh}^G(T^*(G/P)\times T^*(G/P))).
\]
Under the equivalence with $\mathsf{A}_P$-modules, this reduces to the corresponding statement for the diagonal $\mathsf A_P$-bimodule, which follows from the fact that $\{\mathcal E_P^\alpha\}_{\alpha \in W^P}$ and $\{\mathcal E_{P,\alpha}\}_{\alpha \in W^P}$ correspond to the indecomposable projective and irreducible $\mathsf A_P$-modules.
\end{proof}

We next explain how to compare this categorical lift with the lift obtained from
the explicit basis construction of Section \ref{sec:brauer}.

In the $G/B$ case, we compared two bases of $N$ over $A$:
the basis $\{g_w\}_{w\in W}$ coming from the objects $\mathcal E_w$,
and the basis $\{f_w\}_{w\in W}$ coming from the main construction.
We set
\[
g_w^P = \mathrm{Av}_P(g_w), \qquad
f_w^P = \mathrm{Av}_P(f_w), \qquad w\in W^P.
\]

\begin{prop}\label{prop:parabolic-descent}
The collections $\{g_w^P\}_{w\in W^P}$ and $\{f_w^P\}_{w\in W^P}$
are $A$-bases of $N_P$.

Moreover, under
\[
K_0(\mathrm{Coh}^G(T^*(G/P)))\cong N_P,
\]
the classes of the irreducible objects
$\{\mathcal E_{P,\alpha}\}_{\alpha\in W^P}$ identify with the basis
$\{g_w^P\}_{w\in W^P}$, while the classes of the projective objects
$\{\mathcal E_P^{\alpha}\}_{\alpha\in W^P}$ identify with the dual basis. The basis entering the construction of the lift from
Section \ref{sec:brauer} is $\{f_w^P\}_{w\in W^P}$.

Finally, if one equips $N_P$ with the filtration induced from the two-sided
cell filtration on $N$, then $g_w^P$ and $f_w^P$ have the same image in the
associated graded.
\end{prop}

\begin{proof}
On the categorical side, it follows from the description of the parabolic projective
generator as $(q_P)_\star(\mathcal E)$ and from
Proposition \ref{prop:ewkl} that the corresponding classes in $K_0$ are obtained
from the $G/B$ classes by passage from $N$ to the parabolic module $N_P$.

On the combinatorial side, the construction of Section \ref{sec:brauer} is functorial for
the finite map $T/W_P\to T/W$, and hence gives the descended basis
$\{f_w^P\}_{w\in W^P}$ of $N_P$.

Since the operator $\mathrm{Av}_P$ is $A$-linear and preserves the two-sided cell
filtration, the equality of associated-graded images proved in the $G/B$ case
(Lemma \ref{lem:maintwosided}) descends
immediately to the parabolic module.
\end{proof}

Let $\tilde V_P^{\mathrm{KL}}$ be the lift of $\mathsf{k}[(G/P)(\mathbb{F}_q)]$
constructed from the basis $\{g_w^P\}_{w\in W^P}$, and let $\tilde V_P$ be the
lift constructed from the basis $\{f_w^P\}_{w\in W^P}$.

\begin{prop}\label{prop:gp-klande}
The virtual representations $\tilde V_{\mathcal E,P}$ and
$\tilde V_P^{\mathrm{KL}}$ are equal.
\end{prop}

\begin{proof}
The proof is exactly the same as that of Proposition \ref{prop:klande}: once one identifies
the classes of the irreducible and projective objects in the parabolic exotic heart
with the basis $\{g_w^P\}_{w\in W^P}$ and its dual.
\end{proof}

\begin{prop}\label{prop:gp-klandv}
The virtual representations $\tilde V_P^{\mathrm{KL}}$ and $\tilde V_P$ are equal.
\end{prop}

\begin{proof}
Exactly as in the proof of Proposition \ref{prop:klandv}: by
Proposition \ref{prop:parabolic-descent}, the two bases have the same image in the
associated graded for the induced two-sided cell filtration on $N_P$, and the
construction of the lift depends only on that associated-graded basis.
\end{proof}

\begin{theorem}\label{thm:twoliftsP}
Let
\[
V_P=\mathsf{k}[(G/P)(\mathbb{F}_q)].
\]
Then the two virtual $\mathsf{k}$-representations of the algebraic group $G$
attached to $V_P$ by the basis construction of Section \ref{sec:brauer}
and by the coherent-sheaf construction above are equal:
\[
\tilde V_P=\tilde V_{\mathcal E,P}.
\]
\end{theorem}

\begin{proof}
This is immediate from Propositions \ref{prop:gp-klande} and
\ref{prop:gp-klandv}.
\end{proof}

\begin{remark} In Type $A$, the permutation representations $\Ce[G/P(\mathbb{F}_q)]$
span the Grothendieck group of unipotent $G(\mathbb{F}_q)$ representations; this follows
from the corresponding elementary fact about the symmetric group and the bijection
between irreducible unipotent $GL(n,\mathbb{F}_q)$-modules and irreducible representations of
$S_n$.

Thus Theorem \ref{thm:twoliftsP}, together with Theorem \ref{thm:twolifts},
uniquely characterizes the lift appearing in the main construction for all unipotent representations in Type $A$.
\end{remark}

\section{Computations with the elements $M_w$}

In \cite[Section 2]{L}, Lusztig computed by hand the elements $M_w$ in Types $A_1$, $A_2$, $B_2$, $G_2$, and $A_3$. The $M_w$ which we define in the present paper agree with those appearing in loc.\ cit. We reproduce these computations below.

Our definition gives an explicit algorithm for computing $M_w$ in any type. Using our Sage implementation \cite{github}, we obtained explicit expansions for $M_w$ in the basis of Weyl characters in Types $B_3, C_3, A_4, B_4, D_4$. The repository can be used to reproduce all of the computations in the present paper and to work with the constructions introduced here. In Tables \ref{tab:b3}, \ref{tab:c3}, and \ref{tab:a4}, we give such expressions in Types $B_3, C_3$ and $A_4$ to provide some additional examples.

\begin{table}[b]
\caption{A table of $M_w$ for Types $A_1, A_2, B_2, G_2$, previously computed in \cite[Section 2]{L}.}
\renewcommand{\arraystretch}{1.5}
\begin{tabular}{|r|l|}
\hline
\multicolumn{2}{||c||}{Type $A_1$}\\
\hline $w$ & $M_w$\\
\hline
    $1$ & $V_0$ \\
    $s_1$ & $V_{q-1}$\\
    \hline\multicolumn{2}{c}{}\\
    \multicolumn{2}{c}{}\\
    \multicolumn{2}{c}{}\\
    \multicolumn{2}{c}{}\\
    \multicolumn{2}{c}{}\\
    \multicolumn{2}{c}{}\\
\end{tabular} 
\begin{tabular}{|r|l|}
\hline
\multicolumn{2}{||c||}{Type $A_2$}\\
\hline $w$ & $M_w$\\
\hline
    $1$ & $V_{0,0}$ \\
    $s_1$ & $V_{q-1,0}$\\
    $s_2$ & $V_{0,q-1}$\\
    $s_1s_2s_1$ & $V_{q-1,q-1}$\\
    \hline\multicolumn{2}{c}{}\\
    \multicolumn{2}{c}{}\\
    \multicolumn{2}{c}{}\\
    \multicolumn{2}{c}{}\\
\end{tabular}
\begin{tabular}{|r|l|}
\hline
\multicolumn{2}{||c||}{Type $B_2$}\\
\hline $w$ & $M_w$\\
\hline
    $1$ & $V_{0,0}$ \\
    $s_2$ & $V_{0,q-3}$ \\
    $s_1$ & $V_{q-2,0}$ \\
    $s_1s_2s_1$ & $V_{q-1,0}$ \\
    $s_2s_1s_2$ & $V_{0,q-1}$ \\
    $s_2s_1s_2s_1$ & $V_{q-1,q-1}$\\
    \hline\multicolumn{2}{c}{}\\
    \multicolumn{2}{c}{}\\
\end{tabular}
\begin{tabular}{|r|l|}
\hline
\multicolumn{2}{||c||}{Type $G_2$}\\
\hline $w$ & $M_w$\\
\hline
$1$ & $V_{0,0}$ \\
    $s_1$ & $V_{q-4,0}$ \\
    $s_2$ & $V_{0,q-2}$ \\
    $s_1s_2s_1$ & $V_{q-4,1}$ \\
    $s_2s_1s_2$ & $V_{1,q-2}$ \\
    $s_1s_2s_1s_2s_1$ & $V_{q-1,0}$ \\
    $s_2s_1s_2s_1s_2$ & $V_{0,q-1}$ \\
    $s_2s_1s_2s_1s_2s_1$ & $V_{q-1,q-1}$ \\
    \hline
\end{tabular}
\end{table}
\begin{table}
\renewcommand{\arraystretch}{1.5}
\caption{A table of $M_w$ in Type $A_3$, with each $M_w$ being written first in the basis of Weyl modules $V_{\lambda}$ and then in the basis of irreducibles $L_{\lambda}$. The expansion in terms of $V_{\lambda}$ also appears in \cite[Section 2]{L}.}
    \label{tab:a3positivity}
    \centering
    \begin{tabular}{|r|c|c|}
\hline $w$ & $M_w$ in terms of $V_\lambda$ & $M_w$ in terms of $L_\lambda$\\
\hline
    $1$ & $V_{0,0,0}$ & $L_{0,0,0}$ \\
    $s_1$ & $V_{q-1,0,0}$ & $L_{q-1,0,0}$ \\
    $s_3$ & $V_{0,0,q-1}$ & $L_{0,0,q-1}$ \\
    $s_2$ & $V_{0,q-1,0} - V_{0,q-3,0}$ & $L_{0,q-1,0}$ \\
    $s_3s_1$ & $V_{q-1,0,q-1} - V_{q-2,0,q-2}$ & $L_{q-1,0,q-1}$\\
    $s_1s_2s_1$ & $V_{q-1,q-1,0}$ & $L_{q-1,q-1,0}$ \\
    $s_2s_3s_2$ & $V_{0,q-1,q-1}$ & $L_{0,q-1,q-1}$\\
    $s_2s_3s_1s_2$ & $V_{0,q-1,0} + V_{0,q-3,0}$ & $2L_{0,q-3,0} + L_{0,q-1,0}$ \\
    $s_1s_2s_3s_2s_1$ & $V_{q-1,0,q-1} + V_{q-2,0,q-2}$ & $2L_{q-2,0,q-2} + L_{q-1,0,q-1}$\\
    $s_1s_2s_3s_1s_2s_1$ & $V_{q-1,q-1,q-1}$ & $L_{q-1,q-1,q-1}$\\
    \hline
\end{tabular}
\end{table}

\begin{table}
\renewcommand{\arraystretch}{1.5}
\caption{A table of $M_w$ for Type $B_3$.\label{tab:b3}}
\begin{tabular}{|r|l|}
\hline
\multicolumn{2}{||c||}{Type $B_3$}\\
\hline $w$ & $M_w$\\
\hline
    $1$ & $V_{0,0,0}$ \\
$s_3$ & $-V_{0,1,q-5} + V_{1,0,q-3}$ \\
$s_2$ & $V_{0,q-3,0} - V_{0,q-3,2} + V_{1,q-2,0}$ \\
$s_2s_3s_2$ & $V_{1,q-3,0} - V_{0,q-3,2} + V_{0,q-1,0}$ \\
$s_2s_3s_1s_2$ & $-V_{0,q-3,0} - V_{1,q-3,0} + V_{1,q-2,0} + V_{0,q-1,0}$ \\
$s_2s_3s_1s_2s_3s_1s_2s_1$ & $V_{q-1,q-2,0} + V_{q-1,q-1,0}$ \\
$s_3s_2s_3s_1s_2s_3s_2s_1$ & $V_{q-3,0,q-1} + V_{q-2,1,q-3} + V_{q-2,0,q-1} + V_{q-1,0,q-1}$ \\
$s_3s_1s_2s_3s_2s_1$ & $V_{q-2,0,q-3} + V_{q-1,0,q-3} + V_{q-2,1,q-3} + V_{q,0,q-3}$ \\
$s_3s_1s_2s_3s_1$ & $-V_{q-2,0,q-3} - V_{q-3,0,q-1} + V_{q,0,q-3} + V_{q-1,0,q-1}$ \\
$s_1$ & $V_{q-4,0,0} - V_{q-3,0,0}$ \\
$s_3s_1$ & $V_{q-2,0,q-3} - V_{q-2,1,q-3} + V_{q-1,0,q-1}$ \\
$s_1s_2s_3s_2s_1$ & $-V_{q-2,0,0} + V_{q-1,0,0}$ \\
$s_1s_2s_1$ & $-V_{q-1,q-2,0} + V_{q-1,q-1,0}$ \\
$s_1s_2s_3s_1s_2s_1$ & $V_{q-2,q-1,0} + V_{q,q-2,0}$ \\
$s_3s_2s_3s_1s_2s_3s_1s_2$ & $V_{0,q-1,q-1}$ \\
$s_2s_3s_1s_2s_3s_1s_2$ & $V_{0,q-3,0} + V_{0,q-2,0} + V_{0,q-3,2} + V_{0,q-1,0}$ \\
$s_3s_2s_3s_2$ & $V_{0,q-2,q-1}$ \\
$s_3s_2s_3s_1s_2s_3$ & $V_{0,0,q-5} + V_{0,0,q-3} + V_{0,0,q-1}$ \\
$s_3s_2s_3$ & $-V_{0,0,q-5} + V_{0,0,q-1}$ \\
$s_3s_2s_3s_1s_2s_3s_1s_2s_1$ & $V_{q-1,q-1,q-1}$ \\
    \hline
\end{tabular}
\end{table}

\begin{table}
\renewcommand{\arraystretch}{1.5}
\caption{A table of $M_w$ for Type $C_3$.\label{tab:c3}}
\begin{tabular}{|r|l|}
\hline
\multicolumn{2}{||c||}{Type $C_3$}\\
\hline $w$ & $M_w$\\
\hline
    $1$ & $V_{0,0,0}$ \\
$s_3$ & $-V_{0,1,q-3} + V_{1,0,q-2}$ \\
$s_2$ & $V_{0,q-4,0} - V_{1,q-4,1} + V_{2,q-3,0}$ \\
$s_2s_3s_2$ & $V_{2,q-4,0} - V_{1,q-4,1} + V_{0,q-1,0}$ \\
$s_2s_3s_1s_2$ & $-V_{0,q-4,0} + V_{0,q-3,0} - V_{2,q-4,0} - V_{0,q-2,0} + V_{2,q-3,0} + V_{0,q-1,0}$ \\
$s_2s_3s_1s_2s_3s_1s_2s_1$ & $V_{q-1,q-3,0} + V_{q-1,q-2,0} + V_{q-1,q-1,0}$ \\
$s_3s_2s_3s_1s_2s_3s_2s_1$ & $V_{q-3,0,q-1} + V_{q-2,1,q-2} + V_{q-1,0,q-1}$ \\
$s_3s_1s_2s_3s_2s_1$ & $V_{q-2,0,q-2} + V_{q-2,1,q-2} + V_{q,0,q-2}$ \\
$s_3s_1s_2s_3s_1$ & $-V_{q-2,0,q-2} - V_{q-3,0,q-1} + V_{q,0,q-2} + V_{q-1,0,q-1}$ \\
$s_1$ & $V_{q-5,0,0} + V_{q-3,0,0}$ \\
$s_3s_1$ & $V_{q-2,0,q-2} - V_{q-2,1,q-2} + V_{q-1,0,q-1}$ \\
$s_1s_2s_3s_2s_1$ & $V_{q-3,0,0} + V_{q-1,0,0}$ \\
$s_1s_2s_1$ & $-V_{q-1,q-3,0} + V_{q-1,q-1,0}$ \\
$s_1s_2s_3s_1s_2s_1$ & $V_{q-3,q-1,0} + V_{q-1,q-2,0} + V_{q+1,q-3,0}$ \\
$s_3s_2s_3s_1s_2s_3s_1s_2$ & $V_{0,q-1,q-1}$ \\
$s_2s_3s_1s_2s_3s_1s_2$ & $V_{0,q-4,0} + V_{0,q-3,0} + V_{1,q-4,1} + V_{0,q-2,0} + V_{0,q-1,0}$ \\
$s_3s_2s_3s_2$ & $V_{0,q-2,q-1}$ \\
$s_3s_2s_3s_1s_2s_3$ & $V_{0,0,q-3} + V_{0,0,q-1}$ \\
$s_3s_2s_3$ & $-V_{0,0,q-3} + V_{0,0,q-1}$ \\
$s_3s_2s_3s_1s_2s_3s_1s_2s_1$ & $V_{q-1,q-1,q-1}$ \\
    \hline
\end{tabular}
\end{table}

\begin{table}
\renewcommand{\arraystretch}{1.5}
\caption{A table of $M_w$ for Type $A_4$.\label{tab:a4}}
\begin{tabular}{|r|l|}
\hline
\multicolumn{2}{||c||}{Type $A_4$}\\
\hline $w$ & $M_w$\\
\hline
    $1$ & $V_{0,0,0,0}$ \\
    $s_1$ & $V_{q-1,0,0,0}$ \\
    $s_2$ & $V_{0,q-1,0,0} - V_{0,q-3,0,1} + V_{1,q-4,0,0}$ \\
    $s_3$ & $V_{0,0,q-1,0} - V_{1,0,q-3,0} + V_{0,0,q-4,1}$ \\
    $s_4$ & $V_{0,0,0,q-1}$ \\
    $s_3s_1$ & $V_{q-1,0,q-1,0} - V_{q,0,q-3,0} - V_{q-2,0,q-2,1} + V_{q-2,1,q-3,0}$ \\
    $s_4s_1$ & $V_{q-1,0,0,q-1} - V_{q-3,0,0,q-3}$ \\
    $s_4s_2$ & $V_{0,q-3,1,q-2} - V_{0,q-3,0,q} - V_{1,q-2,0,q-2} + V_{0,q-1,0,q-1}$ \\      
    $s_1s_2s_1$ & $V_{q-1,q-1,0,0}$ \\
    $s_3s_4s_3$ & $V_{0,0,q-1,q-1}$ \\
    $s_2s_3s_2$ & $V_{0,q-1,q-1,0} - V_{0,q-2,q-2,0}$ \\
    $s_2s_3s_4s_2s_3s_2$ & $V_{0,q-1,q-1,q-1}$ \\
    $s_1s_2s_3s_1s_2s_1$ & $V_{q-1,q-1,q-1,0}$ \\
    $s_3s_4s_3s_1$ & $V_{q-1,0,q-1,q-1} - V_{q-2,0,q-2,q}$ \\
    $s_2s_3s_1s_2$ & $V_{0,q-1,0,0} + V_{0,q-3,0,1}$ \\
    $s_3s_4s_1s_2s_3s_1$ & $V_{q-1,0,q-1,0} + V_{q,0,q-3,0} + V_{q-2,0,q-2,1} + V_{q-2,1,q-3,0}$ \\
    $s_2s_3s_4s_3s_2$ & $V_{1,q-2,0,q-2} + V_{0,q-1,0,q-1} - V_{0,q-3,0,q} - V_{0,q-3,1,q-2}$ \\
    $s_3s_4s_2s_3$ & $V_{1,0,q-3,0} + V_{0,0,q-1,0}$ \\
$s_2s_3s_4s_1s_2s_3s_1s_2$ & $V_{0,q-2,q-2,0} + V_{0,q-1,q-1,0}$ \\            
$s_2s_3s_4s_3s_1s_2$ & $V_{0,q-3,1,q-2} + V_{0,q-3,0,q} + V_{1,q-2,0,q-2} + V_{0,q-1,0,q-1}$ \\
$s_1s_2s_3s_2s_1$ & $-V_{q-2,1,q-3,0} + V_{q-2,0,q-2,1} - V_{q,0,q-3,0} + V_{q-1,0,q-1,0}$ \\      
$s_1s_2s_3s_4s_3s_2s_1$ & $V_{q-3,0,0,q-3} + 2V_{q-2,0,0,q-2} + V_{q-1,0,0,q-1}$ \\                                 
$s_1s_2s_3s_4s_2s_3s_2s_1$ & $V_{q-2,0,q-1,q-2} + V_{q-2,0,q-2,q} + V_{q-1,0,q-1,q-1}$ \\                             
$s_4s_1s_2s_1$ & $-V_{q,q-2,0,q-2} + V_{q-1,q-1,0,q-1}$ \\   
$s_1s_2s_3s_4s_3s_1s_2s_1$ & $V_{q-2,q-1,0,q-2} + V_{q,q-2,0,q-2} + V_{q-1,q-1,0,q-1}$ \\                            
$s_1s_2s_3s_4s_1s_2s_3s_1s_2s_1$ & $V_{q-1,q-1,q-1,q-1}$ \\
\hline
\end{tabular}
\end{table}

\begin{table}[p]\caption{Table of $\dim(M_w)$ in Type $A_4$.\label{tab:dimtable}}
{\renewcommand{\arraystretch}{1.5}
\begin{tabular}{|r|l|}
\hline
    $w$ & $\dim(M_w)$\\
    \hline
    \hline
    $1$ & $1$\\
    $s_1s_2s_3s_4s_1s_2s_3s_1s_2s_1$ & $q^{10}$\\
    \hline
    $s_2s_3$ & $\frac{1}{4}q + \frac{1}{24}q^2 + \frac{1}{4}q^3 + \frac{11}{24}q^4$\\
    $s_1s_2s_3s_4s_2s_3s_1s_2s_1$ & $\frac{11}{24}q^6 + \frac{1}{4}q^7 + \frac{1}{24}q^8 + \frac{1}{4}q^9$\\
    \hline
    $s_3s_2$ & $\frac{1}{4}q + \frac{1}{24}q^2 + \frac{1}{4}q^3 + \frac{11}{24}q^4$ \\
    $s_1s_2s_3s_4s_1s_2s_3s_2s_1$ & $\frac{11}{24}q^6 + \frac{1}{4}q^7 + \frac{1}{24}q^8 + \frac{1}{4}q^9$\\
    \hline
    $s_4s_1$ & $\frac{1}{9}q^2 + \frac{25}{36}q^4 + \frac{7}{36}q^6$ \\
    $s_2s_3s_4s_1s_2s_3s_1s_2$ & $\frac{7}{36}q^4 + \frac{25}{36}q^6 + \frac{1}{9}q^8$ \\
    \hline
    $s_1s_2s_3s_4$ & $\frac{1}{4}q + \frac{11}{24}q^2 + \frac{1}{4}q^3 + \frac{1}{24}q^4$ \\
    $s_2s_3s_4s_1s_2s_3s_1s_2s_1$ & $\frac{1}{24}q^6 + \frac{1}{4}q^7 + \frac{11}{24}q^8 + \frac{1}{4}q^9$\\
    \hline
    $s_4s_3s_2s_1$ & $\frac{1}{4}q + \frac{11}{24}q^2 + \frac{1}{4}q^3 + \frac{1}{24}q^4$ \\
    $s_1s_2s_3s_4s_1s_2s_3s_1s_2$ & $\frac{1}{24}q^6 + \frac{1}{4}q^7 + \frac{11}{24}q^8 + \frac{1}{4}q^9$\\
    \hline
    $s_3s_4s_1s_2$ & $\frac{1}{12}q^2 + \frac{1}{24}q^3 + \frac{1}{24}q^4 + \frac{11}{24}q^5 + \frac{3}{8}q^6$ \\
    $s_2s_3s_4s_1s_2s_3s_2s_1$ & $\frac{3}{8}q^4 + \frac{11}{24}q^5 + \frac{1}{24}q^6 + \frac{1}{24}q^7 + \frac{1}{12}q^8$\\
    \hline
    $s_4s_2s_3s_1$ & $\frac{1}{12}q^2 + \frac{1}{24}q^3 + \frac{1}{24}q^4 + \frac{11}{24}q^5 + \frac{3}{8}q^6$ \\
    $s_1s_2s_3s_4s_2s_3s_1s_2$ & $\frac{3}{8}q^4 + \frac{11}{24}q^5 + \frac{1}{24}q^6 + \frac{1}{24}q^7 + \frac{1}{12}q^8$\\
    \hline
    $s_4s_2s_3s_2s_1$ & $\frac{7}{36}q^3 + \frac{1}{4}q^4 + \frac{1}{9}q^5 + \frac{1}{4}q^6 + \frac{7}{36}q^7$ \\
    $s_1s_2s_3s_4s_2$& $\frac{7}{36}q^3 + \frac{1}{4}q^4 + \frac{1}{9}q^5 + \frac{1}{4}q^6 + \frac{7}{36}q^7$\\
    \hline
    $s_3s_4s_3s_1s_2s_1$ & $\frac{1}{36}q^4 + \frac{1}{24}q^5 + \frac{1}{9}q^6 + \frac{11}{24}q^7 + \frac{13}{36}q^8$ \\
    $s_2s_3s_4s_1s_2s_3$ & $\frac{13}{36}q^2 + \frac{11}{24}q^3 + \frac{1}{9}q^4 + \frac{1}{24}q^5 + \frac{1}{36}q^6$ \\
    \hline
    $s_1s_2s_3s_4s_3s_1$ & $\frac{1}{36}q^4 + \frac{1}{24}q^5 + \frac{1}{9}q^6 + \frac{11}{24}q^7 + \frac{13}{36}q^8$ \\
    $s_3s_4s_2s_3s_1s_2$ & $\frac{13}{36}q^2 + \frac{11}{24}q^3 + \frac{1}{9}q^4 + \frac{1}{24}q^5 + \frac{1}{36}q^6$\\
    \hline
    $s_3s_4s_2s_3s_1s_2s_1$ & $\frac{1}{18}q^3 + \frac{1}{4}q^4 + \frac{7}{18}q^5 + \frac{1}{4}q^6 + \frac{1}{18}q^7$ \\
    $s_1s_2s_3s_4s_1s_2s_3$ & $\frac{1}{18}q^3 + \frac{1}{4}q^4 + \frac{7}{18}q^5 + \frac{1}{4}q^6 + \frac{1}{18}q^7$ \\
    \hline
    \hline
    
    $s_2s_3s_2$ & $\frac{1}{36}q^3 + \frac{19}{36}q^5 + \frac{4}{9}q^7$ \\
    $s_1s_2s_3s_4s_3s_2s_1$ & $\frac{17}{36}q^3 + \frac{17}{36}q^5 + \frac{1}{18}q^7$ \\
    \hline
\end{tabular}}
    \end{table}
\clearpage

\printbibliography

\end{document}